\UseRawInputEncoding
\documentclass{amsart}
\usepackage[utf8]{inputenc}
\usepackage[english]{babel}
\usepackage{url}
\usepackage{hyperref}
\usepackage{graphicx}
\usepackage{amsmath}%
\usepackage{amsfonts}%
\usepackage{latexsym}
\usepackage{amssymb, amsmath}
\usepackage{epsfig}%
\usepackage{mathrsfs}%
\usepackage{longtable}%
\usepackage{setspace}
\usepackage{geometry, calc, color}
\usepackage{indentfirst}%
\usepackage{wrapfig}%
\usepackage{amsthm}
\usepackage{enumitem} 
\usepackage{subfigure} 
\usepackage{subfiles}
\usepackage{float}

\newcommand{\C}{\mathbb{C}}

\newcommand{\R}{\mathbb{R}}
\newcommand{\Z}{\mathbb{Z}}
\newcommand{\Q}{\mathbb{Q}}
\newcommand{\N}{\mathbb{N}}
\newcommand{\M}{\mathcal{M}}

\newcommand{\J}{\mathcal{J}}

\renewcommand{\P}{\mathcal{P}}

\newcommand{\cz}{{\rm CZ}}

\newcommand{\Ker}{\mathrm{ker}}
\newcommand{\eo}{\mathrm{eo}}

\newtheorem{theorem}{Theorem}[section]

\newtheorem{lemma}[theorem]{Lemma}
\newtheorem{prop}[theorem]{Proposition}

\newtheorem{definition}[theorem]{Definition}

\newtheorem{rem}[theorem]{Remark}

\newcommand{\CZ}{\mu_{CZ}^{\tau}}
\newcommand{\wind}{\text{wind}^{\tau}}
\newcommand{\windd}{\text{wind}}
\newcommand{\ind}{\textnormal{ind}}
\newcommand{\coker}{\textnormal{coker}}

\begin{document}

\title{Cylindrical contact homology for weakly convex  contact forms in dimension three}

\author[Ana Kelly de Oliveira]{Ana Kelly de Oliveira} 
\address{Ana Kelly de Oliveira, Shenzhen International Center for Mathematics, Southern University of Science and Technology, Shenzhen, China}
\email{ako\_kelly@hotmail.com}

\author[Pedro A. S. Salom\~ao]{Pedro A. S. Salom\~ao}
\address{Pedro A. S. Salom\~ao, Shenzhen International Center for Mathematics, Southern University of Science and Technology, Shenzhen, China}
\email{psalomao@sustech.edu.cn}

\date{}

\begin{abstract}
A contact form $\lambda$ on a closed contact three-manifold $(M,\xi)$ is called weakly convex if either it has no contractible Reeb orbit, or the first Chern class of $\xi$ vanishes on $\pi_2(M)$  and the index of every contractible Reeb orbit is at least $2$. We present conditions for a weakly convex contact form to admit a well-defined cylindrical contact homology. The key point is a cancellation mechanism for boundary degenerations involving 
index-2 Reeb orbits, based on a parity property of holomorphic planes. 
\end{abstract}

\maketitle

\tableofcontents

\section{Introduction and main results}

Cylindrical contact homology is an invariant of contact manifolds
defined in terms of the dynamics of their Reeb flows. Its chain complex
$(C,\partial)$ is generated by good Reeb orbits, and the differential
$\partial$ counts rigid pseudo-holomorphic
cylinders connecting such generators.
The definition of the differential is subtle. In general, transversality
may fail especially in the presence of multiply covered curves, and compactness issues may arise
from the presence of additional holomorphic buildings in the boundary of
the moduli spaces. These phenomena can obstruct the proof that
$\partial$ is well-defined and satisfies $\partial^2=0$. 

In \cite{HN}, Hutchings and Nelson showed that cylindrical contact homology is generically well-defined under the dynamically convex
assumption, which excludes certain low-index contractible Reeb orbits
and prevents the appearance of problematic holomorphic buildings.

\begin{theorem}[Hutchings-Nelson \cite{HN}]\label{thm:HN} Let $\lambda$ be a  nondegenerate contact form on a closed contact three-manifold $($M$,\xi)$. Assume that the Reeb flow of $\lambda$ either has no contractible Reeb orbit, or the first Chern class $c_1(\xi)$ vanishes on $\pi_2(M)$ and the index of every contractible Reeb orbit is at least $3$. Assume further in the latter case that every index-$3$ contractible Reeb orbit is embedded.  
Then, the cylindrical contact homology is well-defined for generic $\lambda$-compatible almost complex structures $J$ on $\R \times M$. 
\end{theorem}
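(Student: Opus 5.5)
The plan follows the Hutchings--Nelson strategy. We prove that for generic $J$ the differential $\partial$, which counts index-$1$ cylinders modulo $\R$ with the usual weights for multiple covers, is well defined and satisfies $\partial^2=0$. Throughout we use the grading $|\gamma|=\CZ(\gamma)-1$, which is made global either by the vanishing of $c_1(\xi)$ on $\pi_2(M)$, or by restricting to a fixed free homotopy class of noncontractible orbits. The argument has three steps.

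\textbf{Step 1 (transversality).} We show that for generic $J$ every somewhere injective curve of genus zero with one positive puncture is cut out transversally. For cylinders between good orbits, we use the three-dimensional automatic transversality results of Wendl and Hutchings--Nelson for branched covers of trivial cylinders. Together these give that every cylinder of Fredholm index $\le 2$ in the relevant classes is regular, including multiply covered ones. Consequently the index-$1$ moduli spaces modulo $\R$ are finite sets once compactness is established.

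\textbf{Step 2 (compactness).} We take a sequence of index-$1$ or index-$2$ cylinders from $\gamma_+$ to $\gamma_-$ and apply SFT compactness. The limit is a building of genus zero whose levels may contain curves with several negative ends. Capping off the extra negative ends gives planes asymptotic to Reeb orbits that are contractible. By the index formula, a plane asymptotic to $\gamma$ has index $\CZ(\gamma)-1$, which is at least $2$ under the hypothesis that every contractible orbit has index $\ge 3$. Index additivity then shows the building cannot contain such planes while its total index is $\le 2$, since each plane eats index $\ge 2$ and each nontrivial level has index $\ge 1$ by genericity.

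The borderline case is an index-$2$ family degenerating to an index-$0$ object plus planes of index exactly $2$, that is, planes asymptotic to index-$3$ orbits. Here the assumption that index-$3$ contractible orbits are embedded is used. Such a plane is then somewhere injective and regular, so any curve in the top level carrying an extra negative end would have index $\le 0$. That curve is somewhere injective or a cover of a trivial cylinder, and both are excluded: the first by genericity, the second by Riemann--Hurwitz, since it would be a branched cover of a trivial cylinder with index $\le 0$ and negative ends at simple orbits. Thus the limits of index-$1$ cylinders are unbroken, and the limits of index-$2$ cylinders are two-level broken cylinders. If there are no contractible orbits at all, no planes can appear and this step is immediate.

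\textbf{Step 3 (gluing and $\partial^2=0$).} We show that the boundary of the compactified index-$2$ moduli space modulo $\R$ consists exactly of broken cylinders through intermediate orbits. When all curves are somewhere injective this is standard obstruction-free gluing. We then count ends with coherent orientations and multiplicity weights. The contributions of bad orbits cancel because of the orientation sign reversal under the deck transformation, which is why only good orbits are generators. The result is $\partial^2=0$.

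The main obstacle is Step 3 when multiply covered cylinders appear in the broken configurations. In that case gluing is obstructed, and one must use the Hutchings--Taubes obstruction bundle analysis, or the $S^1$-equivariant/nonequivariant version of Hutchings--Nelson, to show that the count of glued ends is still given by the weighted product formula. We would first try to reduce to somewhere injective curves by an action filtration and a domain-dependent (or $S^1$-dependent) perturbation of $J$, and then pass back to the equivariant complex. Invariance of the homology under changes of $J$ is not part of the statement and is not addressed.
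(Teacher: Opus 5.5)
\textbf{Gap in Step 2.} The gap sits at the one configuration that carries the content of this theorem. The paper does not reprove the statement; it quotes it from \cite{HN}, but it recalls the decisive ingredient as Proposition~\ref{3.1}.

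You dismiss a top level covering a trivial cylinder ``by Riemann--Hurwitz''. Such curves exist for every $J$, however, and neither topology nor genericity excludes them. Take a simple contractible elliptic orbit $\alpha$ with rotation number $\theta\in(1,2)$, so that $\mu_{CZ}(\alpha)=3$. Compose $z\mapsto z^{d}(z-1)$ with the trivial cylinder over $\alpha$. The result is a genus-zero $(d+1)$-fold cover with one branch point, a positive end at $\alpha^{d+1}$ and negative ends at $\alpha^{d}$ and $\alpha$. Its index is $1+\mu_{CZ}(\alpha^{d+1})-\mu_{CZ}(\alpha^{d})-\mu_{CZ}(\alpha)=2\bigl(\lfloor (d+1)\theta\rfloor-\lfloor d\theta\rfloor-1\bigr)$. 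This vanishes whenever $\theta-1$ is small, e.g.\ for $d=1$ and $\theta=1.3$.

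Now cap the end at $\alpha$ with an index-$2$ plane and add the trivial cylinder over $\alpha^d$. This gives an index-$2$ building with one positive and one negative end, which is a genuine candidate boundary point of $\mathcal{M}^J_{0,2}(\alpha^{d+1};\alpha^{d})/\R$. Embeddedness of index-$3$ orbits only forces the capped end to be simple; $\alpha^d$ is arbitrary, so ``negative ends at simple orbits'' is also false. In any case, somewhere-injectivity of the plane does nothing to exclude this building.

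\textbf{What is missing.} You need the intersection-theoretic argument of \cite[Proposition 3.1]{HN}, which is where the embeddedness hypothesis really enters. The steps are:
\begin{itemize}
\item For index-$2$ cylinders $w_k$ converging to the building, slice them near the embedded orbit $\alpha$ into braids $\zeta^k_+,\zeta^k_1,\zeta^k_2,\zeta^k_-$ of degrees $d+1,d,1,d$.
\item By Lemmas~\ref{L3.2} and~\ref{L3.4}, $w_\tau(\zeta^k_+)\le d\lfloor(d+1)\theta\rfloor$ and $w_\tau(\zeta^k_-)\ge (d-1)(\lfloor d\theta\rfloor+1)$.
\item Apply Lemma~\ref{L3.5} to the pair of pants and to the cylinder cut out by the slices.
\item The plane's braid is unknotted with winding number $1$, so it contributes $2d$ to $w_\tau(\zeta_1^k\cup\zeta_2^k)$.
\end{itemize}
With the index-$0$ condition $\lfloor(d+1)\theta\rfloor=\lfloor d\theta\rfloor+1$ these combine to $2d\le\lfloor d\theta\rfloor<2d$, a contradiction. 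The paper runs the same scheme in Proposition~\ref{naopode1}.

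\textbf{Secondary errors.}
\begin{itemize}
\item ``Each nontrivial level has index $\ge 1$ by genericity'' fails for multiply covered levels, and so does your dichotomy ``somewhere injective or a cover of a trivial cylinder''. Covers of trivial cylinders can have index $0$ (Lemma~\ref{1.7}), and covers of non-cylindrical curves can have negative index (Lemma~\ref{2.2}, Proposition~\ref{A}). It is these inequalities that reduce the dynamically convex classification to broken cylinders plus the configuration above.
\item Once that configuration is excluded, Step 3 needs no obstruction bundles. The remaining ends are pairs of index-$1$ cylinders, which are regular by automatic transversality even when multiply covered. These are unbranched covers of embedded nontrivial cylinders, not branched covers of trivial cylinders as you wrote in Step 1. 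The delicate point there is the multiplicity and orientation count.
\item Retreating to domain-dependent or $S^1$-dependent perturbations would prove a different statement from the one claimed for generic cylindrical $J$.
\end{itemize}
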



The main goal of this paper is to extend Theorem \ref{thm:HN} to the weakly convex
case, where contractible Reeb orbits of Conley–Zehnder index $2$ are allowed.
In this case, the compactification of the moduli spaces of index-$2$
holomorphic cylinders may contain additional holomorphic buildings arising
from bubbling at index-$2$ Reeb orbits. These configurations are not present
in the dynamically convex setting and constitute the main obstruction to
the definition of the differential. We show that these obstructions can be completely controlled by a parity phenomenon for holomorphic planes asymptotic to index-2 orbits, leading to a cancellation mechanism for the boundary contributions, as such planes appear in pairs with opposite asymptotic directions.

We first formulate an abstract criterion ensuring that cylindrical contact homology is well-defined.
This criterion is expressed in terms of a parity condition on the moduli
spaces of rigid holomorphic planes asymptotic to index-$2$ orbits.

\begin{theorem}\label{main1}
Let $\lambda$ be a nondegenerate weakly convex contact form on a closed
contact three-manifold $(M,\xi)$ and assume that $c_1(\xi)|_{\pi_2(M)}=0$.
Assume that every contractible index-$2$ and index-$3$ Reeb orbit is embedded. Assume furthermore that the following parity condition holds for a generic $\lambda$-compatible almost complex structure $J\in \mathcal J(\lambda)$ on $\R\times M$: For every simple contractible Reeb orbit 
$\alpha$ with $\mu_{CZ}(\alpha)=2$, the moduli space $
\mathcal{M}^J_{0,1}(\alpha;\emptyset)/\mathbb{R}
$
of $J$--holomorphic planes asymptotic to $\alpha$ is finite and has even
cardinality. Moreover, these planes can be partitioned into two subsets
of equal cardinality, according to their asymptotic direction at $\alpha$,
so that half of them approach $\alpha$ through one direction and half
through the opposite direction.
 Then, for such generic $J$, the cylindrical contact homology of $(M,\lambda)$
is well-defined.
\end{theorem}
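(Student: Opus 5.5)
We follow the Hutchings--Nelson scheme of \cite{HN} and isolate the one new phenomenon created by index-$2$ contractible orbits. The plan is to (i) show that for generic $J\in\mathcal J(\lambda)$ the differential $\partial$ is well defined, and (ii) prove $\partial^2=0$ by analysing the boundary of the compactified moduli spaces of index-$2$ cylinders modulo $\R$ between good orbits. For (i) we must show that the moduli spaces $\mathcal M^J(\gamma_+;\gamma_-)/\R$ of Fredholm index $1$ are compact. Take a sequence of such cylinders and pass to an SFT limit building. Since $\ind=1$ and index is additive over levels, the only possible extra components are planes capping contractible orbits appearing at negative ends of some level. Put $\mu_{CZ}(\text{contractible})\ge 2$ into the index formula $\ind(u)=-\chi+2c_1^\tau+\sum\mu(\gamma_+)-\sum\mu(\gamma_-)$, together with $c_1(\xi)|_{\pi_2(M)}=0$. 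A plane asymptotic to an orbit of index $k$ then has index $k-1\ge 1$. Genericity of somewhere-injective curves forces index $\ge 1$ per nontrivial level, or $\ge 0$ for covers of trivial cylinders, so the total index would be at least $2$. This rules out bubbling for index-$1$ cylinders. Multiply covered components are handled as in \cite{HN}, using the automatic transversality/obstruction-bundle-free arguments in dimension three. The embeddedness of the index-$2$ and index-$3$ contractible orbits excludes the problematic multiply covered planes and the branched covers of trivial cylinders over multiple covers.

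For (ii), fix an index-$2$ component of $\mathcal M^J(\gamma_+;\gamma_-)/\R$ and compactify it. The same index bookkeeping shows that its ends are of two types. The first type is the usual two-level breakings into index-$1$ cylinders through an intermediate orbit. Counted with the weights $1/d$ and the good-orbit conventions, these give $\langle\partial^2\gamma_+,\gamma_-\rangle$. The second type appears only in the weakly convex case. Here a cylinder $v$ of index $1$ has an extra negative end at a simple contractible orbit $\alpha$ with $\mu_{CZ}(\alpha)=2$, capped by a rigid plane $w\in\mathcal M^J_{0,1}(\alpha;\emptyset)/\R$ of index $1$. Equivalently, $v$ is a pair of pants from $\gamma_+$ to $\gamma_-\cup\alpha$ of index $1$ glued to $w$. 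The index-$3$ embeddedness and $\mu\ge 2$ exclude all other configurations, such as bubbling off index-$3$ orbits with index-$2$ planes, since these would push the total index above $2$. By gluing (valid since the pieces are somewhere injective, $\alpha$ being simple), each such pair $(v,w)$ corresponds to exactly one end of the $1$-dimensional moduli space. The signed count of ends is zero, so we get
\[
\langle\partial^2\gamma_+,\gamma_-\rangle=\pm\sum_{\alpha}\sum_{v}\epsilon(v)\sum_{w\in\mathcal M^J_{0,1}(\alpha;\emptyset)/\R}\epsilon(v,w).
\]

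The main obstacle is to show that the inner sum vanishes. This is exactly where the parity hypothesis enters. The coherent orientation of the glued end $\epsilon(v,w)$ factors as $\epsilon(v)\,\epsilon(w)\,\sigma(w)$, where $\sigma(w)$ records the asymptotic direction of $w$ at $\alpha$. Because $\alpha$ is even (index $2$, so the asymptotic operator has eigenvector winding number $1$ with a $\pm$ ambiguity), planes approaching along the two opposite eigenvector directions glue to $v$ with opposite induced orientations on the boundary of the $1$-dimensional space. More geometrically, they lie at opposite ends of a one-parameter family obtained by rotating the matching asymptotic marker. First I would verify this orientation statement by computing the linearized gluing map at the even orbit $\alpha$, where the sign is governed by the sign of the leading eigenvector coefficient. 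Then the partition hypothesis makes the contributions cancel in pairs, so the inner sum is zero. Since every other term is the standard breaking, we conclude $\partial^2=0$. Independence of generic $J$, if required, follows from the same cancellation applied to the chain-map cobordism moduli spaces.
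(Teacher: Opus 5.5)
\textbf{There is a genuine gap.} It sits exactly where the weakly convex case differs from the dynamically convex one.

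You assume that the only new boundary configuration of an index-$2$ family is a \emph{somewhere injective} index-$1$ pair of pants from $\gamma_+$ to $\gamma_-$ and $\alpha$, capped by one rigid plane. You also assume index bookkeeping excludes everything else. That is false. By Proposition \ref{A}, a genus-zero curve with one positive end and $n=d+1$ negative ends can have index as low as $2-d$. This happens when it is an unbranched $d$-fold cover of an index-$1$ somewhere injective pair of pants from $\alpha_1$ to $\alpha_2,\alpha_3$. Take $\gamma_+=\alpha_1^d$, $\gamma_-=\alpha_2^d$, and cap the $d$ negative ends at the index-$2$ orbit $\alpha_3$ by $d$ index-$1$ planes. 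The resulting building (case (iii.6) of Theorem \ref{C}) has total index $(2-d)+d=2$ for every $d\ge1$. It is absent from \cite{HN} only because there planes have index $\ge2$.

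For $d\ge2$ the top level is multiply covered and not regular. Its normal operator has $1$-dimensional kernel (Lemma \ref{dim1}) and hence $(d-1)$-dimensional cokernel. So your step ``gluing is valid since the pieces are somewhere injective'' does not apply, and obstruction-bundle-free arguments are exactly what is unavailable. One needs Hutchings--Taubes obstruction-bundle gluing. The paper reduces the obstruction section to a linear system $Bx=b$ in the relative neck parameters $x_k=(c_{k+1}/c_1)e^{\mu(T_{k+1}-T_1)}$, where $c_k\in\{\pm1\}$ records the asymptotic direction of the $k$-th plane. A $d$-tuple of planes glues only if the ratios $c_{k+1}/c_1$ have the signs of the components of $\bar x=B^{-1}b$.

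Consequently, the cancellation is not the plane-by-plane identity $\sum_w\epsilon(v,w)=0$ you propose. What cancels is the end coming from a gluable $d$-tuple $(v_{21},\dots,v_{2d})$ against the end obtained by replacing \emph{all} planes simultaneously by their opposites. That tuple is again gluable because the ratios $c_k/c_1$ are unchanged. It carries the opposite orientation because the bottom-level translation generator is multiplied by a negative scalar (Theorems \ref{thm:gluing-opposite} and \ref{thm: coherent_orientations}).

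Separately, your claim that all other configurations ``push the total index above $2$'' is wrong. The buildings whose top level is a branched cover of a trivial cylinder (cases (iii.3)--(iii.5) of Theorem \ref{C}) have index exactly $2$:
\begin{itemize}
\item an index-$0$ cover plus an index-$2$ plane;
\item an index-$1$ cover plus an index-$1$ plane at an index-$2$ orbit;
\item an index-$0$ cover plus an index-$1$ cylinder and an index-$1$ plane.
\end{itemize}
These are excluded by the writhe estimates of Lemmas \ref{L3.2}--\ref{L3.5}, as in Propositions \ref{3.1}, \ref{naopode1} and \ref{naopode2}, not by index counting.

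The same slip appears in your part (i). An index-$0$ cover of a trivial cylinder plus an index-$1$ plane has total index $1$, not $\ge2$. It is ruled out only by using embeddedness of index-$2$ orbits to make the orbit simple and hyperbolic, which forces that cover to have index $1$.
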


The parity condition in Theorem \ref{main1} is analytic in nature and a priori
depends on the structure of the moduli spaces of holomorphic planes.
In order to obtain a more geometric and topological criterion,
we present a particular formulation of the main result in which
the parity property is derived from global assumptions on the
contact manifold and its filling.
More precisely, under suitable topological conditions on $(M,\xi)$
and geometric conditions on a strong symplectic filling of $(M,\xi)$,
we prove that every simple contractible index-$2$ Reeb orbit admits
either no rigid holomorphic plane or precisely two such planes,
approaching the orbit through opposite asymptotic directions.
In particular, the parity hypothesis in Theorem \ref{main1} is automatically satisfied.

\begin{theorem}\label{main2}
Let $\lambda$ be a nondegenerate weakly convex contact form on a closed
contact three--manifold $(M,\xi)$ satisfying $\pi_2(M)=0$. 
Assume that $(M,\xi)$ admits a strong symplectic filling $(W,\omega)$
satisfying $\omega|_{\pi_2(W)}=0,$
and the inclusion
$\pi_1(M)\to\pi_1(W)$
is injective.
Assume furthermore that
\begin{itemize}
\item every contractible index-$2$ Reeb orbit is simple and unknotted,
\item every contractible index-$3$ Reeb orbit is embedded.
\end{itemize}
Then for generic $\lambda$-compatible almost complex structures
$J\in\mathcal{J}_{\mathrm{reg}}(\lambda)$ the following assertions hold:

\begin{itemize}
\item[(i)] For every simple contractible Reeb orbit $\alpha$ with
$\mu_{CZ}(\alpha)=2$, the moduli space
$
\mathcal{M}^J_{0,1}(\alpha;\emptyset)/\mathbb{R}
$
is either empty or consists of precisely two elements,
whose asymptotic approaches to $\alpha$ are opposite.

\item[(ii)] The cylindrical contact homology of $(M,\lambda)$ is well-defined.
\end{itemize}
\end{theorem}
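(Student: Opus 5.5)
Part (ii) reduces to part (i) through Theorem \ref{main1}. Since $\pi_2(M)=0$, we have $c_1(\xi)|_{\pi_2(M)}=0$. Index-$2$ orbits are assumed simple and unknotted, hence embedded, and index-$3$ orbits are embedded by hypothesis. So once (i) holds, the parity condition of Theorem \ref{main1} is satisfied and the conclusion follows. The real work is therefore (i).

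\textbf{Step 1: finiteness and transversality.} Fix a simple contractible $\alpha$ with $\mu_{CZ}(\alpha)=2$. The Fredholm index of a plane asymptotic to $\alpha$ is $\mu_{CZ}(\alpha)-1=1$, so $\mathcal{M}^J_{0,1}(\alpha;\emptyset)/\R$ is zero-dimensional for generic $J$. Planes asymptotic to a simple orbit are somewhere injective, so regularity is standard. For compactness, I would use SFT compactness. A degeneration would be a building whose top level is a plane or cylinder, with lower components that are planes asymptotic to contractible orbits. Each such piece carries index at least $\mu_{CZ}-1\ge 1$ by weak convexity, and the indices add up to $1$. This rules out nontrivial buildings, so the quotient is finite.

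\textbf{Step 2: the two asymptotic directions.} Since $\mu_{CZ}(\alpha)=2$ is even, $\alpha$ is positive hyperbolic. In a trivialization extending over a disk bounded by $\alpha$, the asymptotic eigenvectors of the plane have winding $1$. They are eigenvectors of the asymptotic operator for the negative eigenvalue of winding $1$, and this eigenspace is one-dimensional. The asymptotic direction of a plane is thus a sign $\pm$ on a line, so every plane is assigned to one of two classes.

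\textbf{Step 3: counting via the filling and a moduli space of disks.} This is the main step. I would choose an unknotted spanning disk $D$ for $\alpha$, which exists because $\alpha$ is unknotted. I would then study the one-dimensional moduli space of holomorphic objects that interpolate between the planes. The model I would try first is Hofer–Wysocki–Zehnder's theory of index-$2$ orbits and $\mathcal{F}$-type foliations. Stretching the neck along $M$ in the completion of $W$, one considers holomorphic cylinders or disks in the filling. Here $\omega|_{\pi_2(W)}=0$ gives energy bounds and excludes sphere bubbling, and $\pi_1$-injectivity prevents noncontractible orbits from appearing as asymptotics of planes arising in the limit.

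Each rigid plane $u$ at $\alpha$ is the end of a one-parameter family of index-$2$ planes asymptotic to an index-$3$ orbit, or of a family in the filling. Such a family breaks into $u$ glued with something. The boundary of the compactified one-dimensional family consists of pairs of index-$1$ planes, one approaching from each side. An intersection-theory argument, using Siefring's intersection numbers together with $\mathrm{wind}_\infty=1$, shows the planes are embedded, mutually disjoint, and disjoint from $\alpha$. Two planes approaching from the same side would have to intersect, by a positivity-of-intersection argument with the winding constraint. Hence there is at most one plane per side.

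Finally, if there is a plane $u$ on one side, its projection $u(\C)$ is an embedded disk spanning $\alpha$. The interpolating family, which exists since $\pi_2(M)=0$ forbids the family from closing up into a sphere, must terminate at a plane from the other side. This gives exactly zero or two planes, with opposite directions.

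I expect the existence part of Step 3 to be the main obstacle: showing that one plane forces a plane on the opposite side. My first attempt would be to set up the Bishop-family or filling argument so that the relevant boundary is exactly $\{u_+,u_-\}$, and to use the filling hypotheses to exclude all other breakings.
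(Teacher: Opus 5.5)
Your reduction of (ii) to Theorem \ref{main1} matches the paper. So do the finiteness in Step 1, the two-direction dichotomy in Step 2, and the ``at most one plane per side'' part of Step 3, which uses disjointness from $\alpha$ and mutual disjointness via Hofer--Wysocki--Zehnder together with Siefring's intersection theory.

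\textbf{The gap.} The gap is the existence half of Step 3, which is the heart of (i). Starting from one plane $v_0$ asymptotic to $\alpha$, you never actually produce a second, distinct plane. None of the mechanisms you list does this:
\begin{itemize}
\item A family of index-$2$ planes at some index-$3$ orbit need not exist. Even if it did, its other end could break at a different index-$2$ orbit.
\item A Bishop family from a spanning disk, or neck-stretching, is not what is needed.
\item $\pi_2(M)=0$ is not what prevents ``closing up into a sphere.''
\end{itemize}
Moreover, the claim that the boundary of the family ``consists of pairs of index-1 planes, one approaching from each side'' is the conclusion restated, not an argument.

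\textbf{The missing idea.} Translate $v_0$ up into the cylindrical end $[1,\infty)\times M$ of the completion $\widehat W$, where $\widehat J=J$. Then $v_0$ is already a $\widehat J$-holomorphic plane. There is no $\mathbb R$-action on $\widehat W$, so $\mathcal M^{\widehat J}_{0,1}(\alpha;\widehat W)$ is a $1$-manifold of regular, somewhere injective planes. Its component $X$ through $v_0$ is an interval, and one end of $X$ consists of the upward translates of $v_0$. The other end is analyzed by SFT compactness:
\begin{itemize}
\item If the top level lay in $\widehat W$, it would be a regular plane, which is impossible at an end of $X$.
\item Sphere bubbles in the bottom level are excluded by $\omega|_{\pi_2(W)}=0$.
\item If the top level $v_1$ is $J$-holomorphic with negative ends $\beta_1,\dots,\beta_p$, the lower levels cap each $\beta_i$ in $W$. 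By $\pi_1$-injectivity $\beta_i$ is contractible in $M$, so weak convexity gives $\mu_{CZ}(\beta_i)\ge 2$, and therefore
\[
1\le \mathrm{ind}(v_1)=p+1-\sum_{i=1}^p\mu_{CZ}(\beta_i)\le 1-p .
\]
This forces $p=0$.
\end{itemize}
The condition $\pi_2(M)=0$ is what makes this capping-disk index computation consistent.

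Hence the other end of $X$ consists of translates of a $J$-plane $v_1$ with $[v_1]\ne[v_0]$. Only at this point does your intersection argument finish the proof. The projections of $v_0$ and $v_1$ are disjoint and avoid $\alpha$, so the two planes approach from opposite sides. A third plane would share a direction with $v_0$ or $v_1$ while being disjoint from it, which contradicts Siefring's theory.
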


The main difficulty in the proof of Theorem~\ref{main1} is the
appearance of non-regular holomorphic curves in the compactification of
the moduli spaces of index-$2$ cylinders, together with the bubbling-off
phenomenon at contractible index-$2$ Reeb orbits. In particular, one
may find holomorphic buildings with two levels: the upper level
contains a multiply covered curve of negative Fredholm index whose
underlying simple curve is a pair of pants, while the lower level
consists of finitely many rigid holomorphic planes asymptotic to the same
index-$2$ orbit, together with a trivial cylinder. To treat this configuration, we adapt to our setting the gluing theory
for non-regular holomorphic curves developed in \cite{BH18} and \cite{HT07}. The multiply covered curve in the upper level can be
glued with the rigid planes in the lower level that approach the orbit
through the opposite asymptotic direction. The resulting glued curves
provide a natural continuation of the moduli space of index-$2$
cylinders across the boundary corresponding to such buildings.

Since we work with coefficients in $\mathbb{Z}$, it is crucial to
understand the behavior of orientations under this gluing procedure.
A careful analysis shows that when the upper-level curve is glued to
opposite rigid planes, the induced orientation on the corresponding
end of the moduli space changes sign. As a consequence, the boundary
contributions coming from these two-level buildings cancel pairwise.
Thus these configurations become invisible in the definition of the
differential, and the proof that $\partial^2=0$ reduces to the
dynamically convex case treated by Hutchings and Nelson in
Theorem~\ref{thm:HN}.

The abstract criterion stated above (Theorem \ref{main1}) can be verified
for concrete Hamiltonian systems arising in celestial mechanics.
In particular, it applies to the regularized energy levels of the
circular planar restricted three-body problem for energies slightly above
the first Lagrange value, see \cite{LS}. The contact manifold is diffeomorphic to $S^1 \times S^2$ (or its quotient $\R P^3 \# \R P^3$ under the antipodal map) equipped with the standard tight contact form. 
In this case, the energy hypersurface is weakly convex and the unique  contractible
Reeb orbit of Conley-Zehnder index $2$ (the Lyapunov orbit) satisfies the parity condition
for holomorphic planes for suitable choices of $J$ after slightly perturbing the contact form so it becomes nondegenerate. Hence, cylindrical contact homology is
well-defined in this setting. Other concrete examples of weakly convex contact forms are found in \cite{ dePauloHryniewiczKimSalomao2024, dPSS,dePauloSalomao2019}.

The algebraic formalism of contact homology was introduced as part of Symplectic Field Theory (SFT) by Eliashberg, Givental, and Hofer~\cite{EGH00}. 
Its full analytic foundations require a general transversality theory, for which the polyfold framework developed by Hofer, Wysocki, and Zehnder~\cite{HWZ_polyfold} provides an approach. For recent surveys and further developments, see Bourgeois~\cite{Bourgeois2025}. 
Additional references concerning cylindrical contact homology and related constructions include 
\cite{EGH00, yau2004, vK08, yau2009, Momin, AM12, N, HM, Golovko, BH18, HMark, Digiosia, HMS}.

The paper is organized as follows. In Section~\ref{sec:basics}, we review
the basic notions of Reeb flows and pseudo-holomorphic curves in
symplectizations, recalling the compactness and
transversality results that will be used throughout the paper.
In Section~\ref{sec:index}, we classify all holomorphic buildings of
Fredholm index $1$ and $2$ under the weak convexity assumption.
We then determine which index-$2$ buildings may arise as boundary
points of one-parameter families of index-$2$ holomorphic cylinders.

In Section~\ref{sec:gluing}, we analyze in detail the buildings that
obstruct the definition of the differential $\partial$. The relevant
configurations have two levels: the top level consists of a curve with
one positive end and $d+1$ negative ends ($d\geq 1$), Fredholm index
$2-d$, which is a $d$-fold cover of a pair of pants with one positive
and two negative punctures. The bottom level contains a trivial cylinder
together with $d$ rigid holomorphic planes asymptotic to a common
index-$2$ Reeb orbit. We study the associated obstruction bundle and
analyze the zeros of its canonical section in order to determine when
such buildings can be glued to genuine index-$2$ cylinders.

Finally, in Section~\ref{sec:homologia}, we complete the proof of
Theorem~\ref{main1} and establish that the differential satisfies
$\partial^2=0$. Theorem \ref{main2} is also proved.

\section{Basics}
\label{sec:basics}

\subsection{Reeb flows} A contact form on a closed three-manifold $M$ is a $1$-form $\lambda$ so that $\lambda \wedge d\lambda$ never vanishes. It determines  the Reeb vector field $R$  by the relations
$d\lambda(R, \cdot) \equiv 0$ and $\lambda(R) \equiv 1.$ Its flow $\phi_t,t\in \R$, preserves $\lambda$ and is called the Reeb flow of $\lambda$.
The contact structure $\xi=\ker \lambda\subset TM$ is  equipped with the symplectic form $d\lambda|_\xi$. 

A Reeb orbit is a pair $\alpha=(x,T)$, where $x:\R \to M$ is a periodic orbit of $\phi_t$ and $T>0$ is a period of $x$. The set of Reeb orbits is denoted $\mathcal{P}(\lambda)$,  identifying those with the same image and the same period. We call $\alpha$ simple if $T>0$ is the smallest period of $x$. Otherwise,  $\alpha$ is a $k$-cover of a simple Reeb orbit $\alpha_0=(x_0,T_0)$, and  $d(\alpha)=\frac{T}{T_0}$ is the covering number of $\alpha$. We say that $\alpha$ is non-degenerate if the symplectic linear mapping
 $
  d\phi_T:\xi_{x(0)} \to \xi_{x(0)}$ does not admit $1$  as an eigenvalue.
 We assume that $\alpha$ is non-degenerate, i.e., every Reeb orbit of $\phi_t$ is non-degenerate. We call $\alpha$ elliptic if $d\phi_T$ has eigenvalues in the unit circle. It is called positive hyperbolic if $d\phi_T$ has distinct positive real eigenvalues, and negative hyperbolic if $d\phi_T$ has distinct negative real eigenvalues.

We say that $\alpha \in \mathcal{P}(\lambda)$ is {\bf bad} if it is an even cover of a negative hyperbolic orbit. If $\alpha$ is not bad, we call it {\bf good}. The respective sets of bad and good Reeb orbits are denoted by $\mathcal{P}_{\rm bad}(\lambda)$ and $\mathcal{P}_{\rm good}(\lambda)$.

\subsection{Pseudo-holomorphic curves} Let $(\Sigma,j)$ be a closed Riemann surface, $\Gamma \subset \Sigma$ a finite set of punctures, and $(W,J)$ an almost complex manifold. Let $\dot \Sigma := \Sigma \setminus \Gamma$. A differentiable map $u:\dot \Sigma \to W$ is  called a $J$-holomorphic curve if 
$
\bar{\partial}_J(u):=(du+J(u) \circ du \circ j)/2\equiv 0.
$
We say that $u$ is somewhere injective if there exists $z \in \dot \Sigma$ such that $du(z):T_z\dot \Sigma \to T_{u(z)}W$ is injective and $u^{-1}\left(u(z)\right)=\{z\}$.
We call $u$ multi-covered if there exist a compact Riemann surface $(\Sigma',j')$, a finite subset $\Gamma'\subset \Sigma'$, a somewhere injective pseudo-holomorphic curve $u' :\Sigma' \to W$, and a holomorphic branched covering $\phi:\dot \Sigma \to \Sigma'\setminus \Gamma'$ such that
$u=u' \circ \phi$ and ${\rm deg}(\phi)>1.$
The curve $u$ is called simple if it is not multi-covered, i.e, when $u$ is somewhere injective.
Every pseudo-holomorphic curve $u$ can be factored through a somewhere injective curve $u'$ and a holomorphic branched cover $\varphi$ such that $u=v \circ \varphi$, see  \cite[Appendix]{prop2}  and \cite{mcduff2004j}. The multiplicity of  $u$ is uniquely defined by
\begin{equation}
d(u):=\left\{\begin{array}{ll}
 {\rm deg}(\phi),    & \mbox{if } u \mbox{ is multi-covered},  \\
   1,  & \mbox{if } u \mbox{ is somewhere injective}.
\end{array}
\right.
\label{d2}
\end{equation}

Let $(W, \omega)$ be a symplectic manifold and $J:TW \to TW$ be an almost complex structure. We say that $J$ is $\omega$-compatible if $g_J:=\omega(\cdot ,J \cdot)$ is a Riemannian metric on $W$.  
Let $\mathcal{J}(W,\omega)$ be the set of $\omega$-compatible almost complex structures on $(W,\omega)$.

If $(M,\xi=\ker \lambda)$ is a co-oriented contact manifold, its symplectization $\R \times M$ is equipped with the symplectic structure $\Omega:=d(e^a \lambda)$, where $a$ is the $\R$-coordinate and $\lambda$ is seen as an $\R$-invariant 1-form on $\R \times M$. The symplectization $\R \times M$ admits the $\Omega$-compatible and  $\R$-invariant almost complex structures $J$ satisfying
$J \cdot \partial_a = R$, and so that $J(\xi)=\xi$ is $d\lambda$-compatible,
where $\xi$ is seen as an $\R$-invariant bundle over $\R \times M$.
The space of such $J$'s is denoted by
$\mathcal{J}(\lambda) \subset \mathcal{J}(\R \times M, d(e^a \lambda))$.

Let $J\in \J(\lambda)$ and let $\tilde u=(a,u):\dot \Sigma \to \R \times M$ be  $J$-holomorphic.  The Hofer energy of $\widetilde{u}$ is defined as
$
E(\widetilde{u})=\sup_{\phi \in \Lambda} \int_{\Sigma \backslash \Gamma} \widetilde{u}^*d\lambda_{\phi},
$
where $\Lambda=\left\{\phi \in C^{\infty}\left(\R,[0,1]\right); \phi' \geq 0 \right\}$ and $\lambda_{\phi}$ is the $1$-form on $\R \times M$ given by
$\lambda_{\phi}(a,u)=\phi(a)\lambda(u).$ If
$
0<E(\tilde u) < +\infty,
$
then $\tilde u$ is called a finite energy curve. In particular, $\tilde u$ is non-constant.

\begin{theorem}[Hofer \cite{Hofer1993, prop1}]\label{teo:Hofer}
Let $\lambda$ be nondegenerate, $J\in \J(\lambda)$ and  $\widetilde{u}=(a,u):([0,+\infty) \times \R / \Z,i) \to (\R \times M,J)$ be a finite energy curve. Assume that $a$ is not bounded. Then there exists $\epsilon\in \{-1,1\}$  and $\alpha=(x,T) \in \P(\lambda)$, such that $u(s, \cdot) \to x(\epsilon T\cdot)$ in $C^\infty$ as $s \to \infty$. Moreover, one of the following alternatives hold
\begin{itemize}
    \item[(i)] if $\epsilon=1$, then $a(s,t) \to +\infty$ as $s \to \infty$ uniformly in $t$.
    \item[(ii)] if $\epsilon=-1$, then $a(s,t) \to -\infty$ as $s \to \infty$ uniformly in $t$.
\end{itemize}
\end{theorem}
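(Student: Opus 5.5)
The statement is Hofer's asymptotic theorem for a finite energy half-cylinder $\widetilde u=(a,u)$ with $a$ unbounded. I would prove it in three steps: extract a periodic orbit along a sequence of loops, upgrade sequential convergence to full $C^\infty$ convergence using nondegeneracy, and read off the sign of $a$ from the sign of the period.

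\textbf{Step 1: gradient bound and sequential convergence.} First I would show that $|\nabla \widetilde u|$ is bounded on $[1,\infty)\times\R/\Z$. If not, a bubbling-off argument (Hofer's lemma on almost-maximal points plus rescaling) produces a nonconstant finite energy $J$-holomorphic plane with vanishing $d\lambda$-energy. Such a plane is constant, since its $\lambda$-energy would otherwise force a contradiction with $\int u^*d\lambda=0$ on $\C$ (Stokes). This contradiction gives the bound.

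Finite $d\lambda$-energy then implies $\int_{[s,\infty)\times S^1}u^*d\lambda\to 0$. For a sequence $s_n\to\infty$, I consider the translated maps $\widetilde u_n(s,t)=(a(s_n+s,t)-a(s_n,0),\,u(s_n+s,t))$. By elliptic bootstrapping these are $C^\infty_{loc}$-bounded, so a subsequence converges to a $J$-holomorphic cylinder with zero $d\lambda$-energy. Such a limit has the form $(Ts+c,\,x(Tt+d))$ for some $T\in\R$ and some Reeb trajectory $x$.

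The number $T=\int_{S^1}u(s,\cdot)^*\lambda$ is independent of the subsequence: this integral is monotone in $s$ (its derivative is $\int u^*d\lambda\ge 0$) and bounded by the energy. If $T=0$, the limit is constant along loops, and this forces $a$ to be bounded. Indeed, the loops $u(s,\cdot)$ would become short uniformly, and a standard argument with $\partial_s a=\lambda(\partial_t u)$ bounds $a$. So $a$ unbounded gives $T\neq 0$, and $x(T\cdot)$ is a closed orbit of period $|T|$.

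\textbf{Step 2: full convergence.} The main obstacle is passing from convergence along a subsequence to convergence as $s\to\infty$, with the limit independent of choices. Here nondegeneracy enters. Since the set of periods is discrete and the period is fixed, the $\omega$-limit set of the loops $u(s,\cdot)$ is a connected subset of the set of Reeb orbits of period $|T|$, up to parametrization. Nondegeneracy makes such orbits isolated, so there is a single limit orbit $x$ up to phase shift.

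To fix the phase and obtain $C^\infty$ convergence, I would use exponential decay. In a tubular neighborhood of $x$, the normal component of $u$ satisfies a linear Cauchy--Riemann-type equation $\partial_s\zeta+A_\infty\zeta=$ (small). The operator $A_\infty$ is the asymptotic operator, and it has no kernel by nondegeneracy. A differential inequality for $\|\zeta(s)\|_{L^2}^2$ then gives exponential decay of $\zeta$, and the same estimates give exponential decay of $\partial_s$ of the phase. Higher derivatives follow by bootstrapping. This is the technical heart of the proof.

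\textbf{Step 3: the sign of $a$.} Set $\epsilon=\operatorname{sign}(T)$. Since $\partial_s a=\lambda(\partial_t u)\to T$ uniformly, we get $a(s,t)=Ts+O(1)$. Hence $a\to+\infty$ uniformly when $\epsilon=1$ and $a\to-\infty$ uniformly when $\epsilon=-1$, and $u(s,\cdot)\to x(\epsilon T\,\cdot)$ after reparametrizing the orbit to period $|T|$.
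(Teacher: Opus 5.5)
The paper does not prove this statement. It quotes it from Hofer and Hofer--Wysocki--Zehnder. Your outline follows the route of those sources: bubbling for a gradient bound, $C^\infty_{loc}$ limits of shifted half-cylinders with vanishing $d\lambda$-energy, the monotone action $T(s)=\int_{S^1}u(s,\cdot)^*\lambda$, then nondegeneracy and exponential decay to fix the orbit and its phase. There is, however, a genuine gap exactly where the hypothesis ``$a$ unbounded'' enters: your claim that $T=0$ forces $a$ to be bounded.

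When $T=0$, the information you have is the following. First, $|\nabla\tilde u|\to 0$ uniformly. Second, $\bar a(s)=\int_{S^1}a(s,t)\,dt$ satisfies $\bar a'=T(s)\le 0$, $T(s)\uparrow 0$ and $\bar a''=T'(s)\ge 0$. Uniformly short loops together with $\partial_s a=\lambda(\partial_t u)$ give only $\sup_t|\partial_s a(s,\cdot)|\to 0$, i.e.\ $a=o(s)$. This is compatible with a drift such as $\bar a(s)\approx-\log s$, which is convex, decreasing, and has derivative tending to $0$.

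Boundedness of $a$ needs $\int^\infty|T(s)|\,ds<\infty$, i.e.\ a finite first moment of the $d\lambda$-energy density. Finiteness of the $d\lambda$-energy alone does not give this, so a quantitative decay estimate is required. Here is one way to get it.
\begin{itemize}
\item Since $\bar a$ is nonincreasing, $a$ is bounded above. Hence $A(s):=\int_{S^1}\tilde u(s,\cdot)^*(e^a\lambda)$ increases to $0$, with $A'(s)=\int_{S^1}e^a(|\nabla a|^2+|\pi\partial_s u|^2)\,dt$.
\item Capping the short loop $\tilde u(s,\cdot)$ by a small disk gives the isoperimetric bound $|A(s)|\le C e^{\min a}\ell(s)^2\le C A'(s)$. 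Hence $A$ decays exponentially.
\item Since $a=o(s)$, this makes $\int_s^{s+1}\!\int_{S^1}a_s^2\,dt\,ds$ exponentially small, so $\int^\infty|\bar a'|<\infty$ and $a$ is bounded.
\end{itemize}
A monotonicity-lemma argument on deep slabs $\{c-1\le a\le c+1\}$ also works. This removable-singularity step is precisely where the unboundedness of $a$ is used, so it cannot be passed off as routine.

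A smaller point concerns the bubbling step. A plane with zero $d\lambda$-energy is not excluded by Stokes: then $v^*\lambda=df$ is exact, and Stokes says nothing. The plane is constant for a different reason. It lies on a single Reeb trajectory, so $b+if$ is an entire holomorphic function, and finiteness of $\sup_\phi\int\phi'(b)\,db\wedge df$ forces that function to be constant.
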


Let $\tilde u=(a,u):(\Sigma \setminus \Gamma,j) \to (\R \times M, J)$ be a finite energy curve. If $a$ is bounded near $z^* \in \Gamma$, then it is possible to show that $z^*$ is removable. Otherwise, we consider a punctured neighborhood of $z^* \in \Gamma$ which is conformal to $([0,+\infty) \times \R / \Z,i)$. We call the puncture $z^*$  positive or  negative according to the sign of $\epsilon$ in Theorem \ref{teo:Hofer}. The corresponding Reeb orbits are called the asymptotic limits of $\tilde u$ at $z^*$. If an asymptotic limit $\alpha \in \P(\lambda)$ at $z^*\in \Gamma$ is non-degenerate, then it is the unique asymptotic limit at $z^*$ and the convergence of $\tilde u$ to $\alpha$ is exponential.  We may assume that $\Gamma$ has only non-removable punctures, which induces the splitting $\Gamma = \Gamma^+ \cup \Gamma^-.$ If $\Sigma$ is closed, then the maximum principle applied to the function $a$ implies that $\Gamma^+ \neq \emptyset.$ Our standing assumption that $\lambda$ is non-degenerate implies that $\tilde u$ is asymptotic to a unique Reeb orbit at each puncture.

We say that two $J$-holomorphic curves $\tilde u:(\Sigma \setminus \Gamma,j) \to (\R \times M, J)$ and $\tilde v:(\Sigma'\setminus \Gamma',j') \to \R \times M$ are equivalent if there exists a bi-holomorphisms $\varphi:(\Sigma\setminus \Gamma,j) \to (\Sigma'\setminus \Gamma',j')$ such that $\tilde v = \tilde u \circ \varphi$. Let $\alpha_1, \ldots, \alpha_k, \beta_1, \ldots, \beta_l\in \P(\lambda)$, and let $g$ be a non-negative integer. We denote by
$\M^J_g(\alpha_1,\ldots,\alpha_k;\beta_1,\ldots,\beta_l)$
the space of equivalent classes of finite energy $J$-holomorphic curves of genus $g$, which have $k$ positive ends asymptotic to the Reeb orbits $\alpha_1,\ldots,\alpha_k$ and $l$ negative ends asymptotic to Reeb orbits $\beta_1 ,\ldots,\beta_l$.
Finally, we denote by
$\M^J_g(\alpha_1,\ldots,\alpha_k;\beta_1,\ldots,\beta_l)/\R$
the quotient of $\M^J_g(\alpha_1,\ldots,\alpha_k;\beta_1,\ldots,\beta_l)$ under the $\R$-action $\rho \cdot (a,u) = (a+\rho,u), \forall \rho\in\R$.

\subsection{The Conley-Zehnder index}
Let $(M,\xi = \ker \lambda)$ be a closed contact $3$-manifold, where $\lambda$ is nondegenerate. 
Given $\alpha=(x,T)\in \P(\lambda)$, let $x_T:=x(T\cdot):\R \to M$. Let $\mu_{\rm CZ}^\tau(\alpha)$ be the Conley-Zehnder index of $\alpha$ with respect to a symplectic trivialization $\tau:x_T^*\xi \to \R/ \Z \times \C$. It is defined as follows. The linearized flow along $\alpha$ and the trivialization $\tau$ determine a path of symplectic matrices $\Psi_t = \tau \circ d\phi_{Tt} \circ \tau^{-1}\in {\rm Sp}(1),$ $t\in [0,1],$  starting from the identity and ending at a matrix which does not admit $1$ as an eigenvalue.  A curve $v_t = \Psi_t \cdot v_0,t\in[0,1],$ starting from $v_0\in \C \setminus 0,$ writes as $v_t = r(t) e^{i \theta(t)},$ for continuous $r(t)>0$ and $\theta(t)\in \R.$ Varying $v_0$ along $S^1 \subset \C$, the set of all possible values for $\frac{\theta(1)-\theta(0)}{\pi}$ is a compact interval $K \subset \R$ of length  $<1$. Because $\alpha$ is nondegenerate, we have
$ \partial K \cap \Z = \emptyset.$
We define
$$
\mu_{\rm CZ}^\tau(\alpha) := \left\{\begin{aligned} & 2k, & \mbox{ if }  K \mbox{ contains } k\in \Z, &\\ & 2k+1, & \mbox{ if } K\subset (k,k+1) \mbox{ for some } k\in \Z.  \end{aligned} \right.
$$

A trivialization $\tau$ of $x_T^*\xi$ induces a trivialization $\tau^d$ along the iterates $\alpha^d=(x,dT), \forall d\in \N$. In this case, we  simply denote by $\mu_{\rm CZ}^\tau(\alpha^d)$ the Conley-Zehnder index of $\alpha^d$ with respect to $\tau^d$. 

The rotation number of $\alpha$ with respect to $\tau$  is then defined as
$$
\theta^\tau(\alpha) : =  \lim_{d \to \infty} \frac{\mu_{\rm CZ}^\tau(\alpha^d)}{2d}.
$$
One can check that
$
\mu_{\rm CZ}^\tau(\alpha^d) = \lfloor d \theta^\tau(\alpha) \rfloor + \lceil d\theta^\tau(\alpha) \rceil, \forall d. 
$
If $\alpha\in \P(\lambda)$ is  hyperbolic, then 
$
\mu_{CZ}^{\tau}(\alpha^d)=d\cdot\mu_{CZ}^{\tau}(\alpha), \forall d\geq 1.
$
If $\alpha\in \P(\lambda)$ is elliptic, then $\theta^\tau(\alpha)\in \R \setminus \Q$ and thus

\begin{equation}\label{eq:CZ_ellpitic}
\begin{aligned}
\CZ(\alpha^d) &= 2\lfloor d \cdot \theta^\tau(\alpha) \rfloor + 1 \\
&\geq  2d\lfloor \theta^\tau(\alpha) \rfloor + 1 \\
&= d\left(2\lfloor \theta^\tau(\alpha) \rfloor + 1\right)-d+1 \\
&= d \cdot \CZ(\alpha)-(d-1).
\end{aligned}
\end{equation}
We shall assume that the first Chern class of $\xi \to M$ vanishes on $\pi_2(M)$, that is 
$
c_1(\xi)|_{\pi_2(M)} \equiv 0.$

If $\alpha=(x,T)\in \P(\lambda)$ is  contractible, then we denote by $\mu_{\rm CZ}(\alpha) := \mu_{\rm CZ}^{\tau_*}(\alpha),$
  the Conley-Zehnder index of $\alpha$ with respect to a symplectic trivialization $\tau_*$  of $x_T^*\xi$ that extends over  a capping disk $u$ for $\alpha$, i.e. if $u:\mathbb{D} \to M$ is continuous and satisfies  $u(e^{i2\pi t}) = x(Tt), \forall t$, then $\tau_*$ continuously extends to a symplectic trivialization of $u^*\xi.$ By assumption,  $\mu_{\rm CZ}(\alpha)$ does not depend on the capping disk $u$. We denote 
\begin{equation*}
    \mathcal{P}^\tau_{{\rm good},i}(\lambda):=\left\{\alpha \in \mathcal{P}^{\tau}_{\rm good}(\lambda): \mu_{\rm CZ}^{\tau}(\alpha)=i-1\right\},
\end{equation*}
where $\tau$ is a given trivialization.

Let $\tilde u=(a,u):\dot \Sigma \to \R \times M$ be a pseudo-holomorphic curve with positive ends in $\alpha_1,...,\alpha_n$ and negative ends in $\beta_1,...,\beta_k$, and let  $\tau$ be a set of trivializations of $\xi$ along $\alpha_1,...,\alpha_n,\beta_1,...,\beta_k$. The  Fredholm index of $\tilde u$ is defined as
\begin{equation}
{\rm ind}(\tilde u):=-\chi(u)+2c_{\tau}(u)+\sum_{i=1}^k \mu_{CZ}^{\tau}(\alpha_i)-\sum_{i= 1}^l \mu_{CZ}^{\tau}(\beta_i),
\end{equation}
where \begin{equation}
   \chi(u):=\chi(\dot \Sigma) =2-2g-\# \Gamma= 2-2g-k-l,
   \label{eq:chi(u)}
\end{equation}
 and $c_{\tau}(u)$ is the relative first Chern class of  $u^*\xi$ with respect to $\tau$. Notice that if $\tau$ continuously extends as a trivialization of $u^*\xi$, then $c_\tau(u)=0.$

We denote by
$\M^J_{g,p}(\alpha_1,\ldots,\alpha_k;\beta_1,\ldots,\beta_l) \subset \M^J_g(\alpha_1,\ldots,\alpha_k;\beta_1,\ldots,\beta_l)$
the set of all $J$-holomor\-phic curves $\tilde u\in \M^J_g(\alpha_1,\ldots,\alpha_k;\beta_1,\ldots,\beta_l)$ such that $\ind(\tilde u)=p$. Its quotient under the action induced by $\R$-translations is denoted  by
$
 \M^J_{g,p}(\alpha_1,\ldots,\alpha_k;\beta_1,\ldots,\beta_l)/\R.
 $

\subsection{The asymptotic operator}
Let $(M,\xi=\ker \lambda)$ be a closed contact three-manifold with nondegenerate contact form$\lambda$, $J \in \mathcal{J}(\lambda)$ and $\alpha$ a  $T$-periodic Reeb orbit. Let $Y:\R / \Z \to \alpha_T^*\xi$ be  smooth, where $\alpha_T := \alpha(T\cdot)$.
The Lie derivative  of $Y$ in the direction of $\dot \alpha_T= T\cdot  R_\lambda \circ
\alpha_T$ is given by
$$(\mathcal{L}_{\dot \alpha_T}Y)(t)=\frac{d}{ds}\Big|_{s=0}d\psi_{-sT}\big(\alpha_T(t+s )\big) \cdot Y(t+s), \quad \forall t\in \R/\Z,$$
where $\psi_t$ is the flow of $R_\lambda$. 

The {\bf asymptotic operator}  $A_\alpha:H^1(\alpha_T^*\xi) \subset L^2(\alpha_T^*\xi) \to L^2(\alpha_T^*\xi)$
associated with $\alpha$ and $J$ is defined by
$$A_\alpha Y:=-J\cdot \mathcal{L}_{\dot \alpha_T}Y, \quad \forall Y \in H^1(\alpha_T^*\xi).$$
Although $A_\alpha$ depends on $J$, we omit $J$ in the notation for simplicity. The asymptotic operator $A_\alpha$ is an unbounded operator between suitable Hilbert spaces, self-adjoint with respect to the inner product
$$\left< Y_1,Y_2 \right>_J:=\int_{\R / Z} d\lambda_{\alpha(t)}(Y_1(t),J_{\alpha(t)}Y_2(t) ) \, dt, \quad \forall Y_1,Y_2 \in H^1(\alpha_T^* \xi).$$
The spectrum ${\rm spec}(A_\alpha)$ of $A_\alpha$ consists of real eigenvalues that accumulate at $\pm \infty$. The Reeb orbit $\alpha$ is  non-degenerate  if and only if  $\ker A_\alpha$ is trivial.  
Using a trivialization $\tau:\alpha_T^*\xi \to \R / \Z \times \R^2$ satisfying $J \equiv i$ and $d\lambda \equiv dx \wedge dy$, the asymptotic operator $A_\alpha$ takes the form
\begin{equation}
A_{\alpha} = -i\frac{d}{dt}-S_{\infty}, 
\end{equation}
where $S_{\infty}:\R / \Z \to {\rm Sym}(2,\R)$ is a loop of $2\times 2$ real symmetric matrices. Hence, a non-trivial $\mu$-eigenvector $y:\R / \Z \to \R^2 $ of $A_{\alpha}$ never vanishes and thus has a well-defined winding number $\wind(\mu) \in \Z,$ not depending on the eigenvector.  

\begin{theorem}[Hofer-Wysocki-Zehnder \cite{prop2}] \label{windhwz} The following assertions hold:
\begin{itemize}
\item[(i)] If $y_1$ and $y_2$ are (non-trivial) eigenvectors of $A_{\alpha}$ associated with the eigenvalue $\mu\in \R$, then their winding numbers are the same. 

\item[(ii)] Let  $\mu_1 < \mu_2$ be eigenvalues of $A_\alpha$, and let $y_1$ and $y_2$ be (non-trivial) eigensections associated with $\mu_1$ and $\mu_2$, respectively. Then $\wind(\mu_1) \leq \wind(\mu_2)$. Moreover, if $\wind(\mu_1)=\wind(\mu_2)$, then $y_1$ and $y_2$ are pointwise linearly independent.

\item[(iii)] Given $k \in \Z$, there are precisely two eigenvalues $\mu_1$ and $\mu_2$ of $A_{\alpha}$, counting multiplicities, such that $k=\wind(\mu_1)=\wind(\mu_2).$

\end{itemize}
\end{theorem}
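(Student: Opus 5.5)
We prove Theorem \ref{windhwz} by reducing $A_\alpha$ to a Sturm--Liouville type problem on the circle. The argument follows Hofer--Wysocki--Zehnder. Fix the unitary trivialization $\tau$ in which $A_\alpha=-i\frac{d}{dt}-S_\infty(t)$. A $\mu$-eigenvector $y$ solves the linear ODE
\[
\dot y = i\bigl(S_\infty(t)+\mu\bigr)y,
\]
so $y$ never vanishes by uniqueness of solutions. Writing $y=re^{i\theta}$, we get
\[
\dot\theta=\langle (S_\infty+\mu)e^{i\theta},e^{i\theta}\rangle=:f_\mu(t,\theta).
\]
This is a first-order ODE on the circle bundle, $\pi$-periodic in $\theta$, and strictly increasing in $\mu$.

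\textbf{Step 1: item (i).} Suppose $y_1,y_2$ are $\mu$-eigenvectors with different winding. Take a linear combination $y_1+cy_2$ that vanishes at some time $t_0$. It is still a solution of the same linear ODE, so it vanishes identically, which forces $y_1,y_2$ to be linearly dependent over $\R$. But then their windings agree, a contradiction. More carefully: if $y_1$ and $y_2$ are pointwise independent, the angle between them stays in $(0,\pi)$, so their windings agree. If they are dependent at some point, they are proportional, and again their windings agree.

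\textbf{Step 2: item (ii), comparison.} Let $\mu_1<\mu_2$, and let $\theta_j$ be the angle lifts of $y_j$. Since $f_{\mu_2}>f_{\mu_1}$ pointwise, a standard comparison argument applies. Set $\delta=\theta_2-\theta_1$, normalized so that $\delta(0)\in[0,\pi)$ or $(-\pi,0]$. Whenever $\delta\in\pi\Z$, we have $\dot\delta>0$. Hence $\delta$ can cross each multiple of $\pi$ only upward. Over one period, $\delta(1)-\delta(0)=2\pi(\wind_2-\wind_1)$, which rules out $\wind_2<\wind_1$.

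If the windings are equal, then $\delta(1)=\delta(0)$. Combined with the one-way crossing property, $\delta$ never hits $\pi\Z$. (If it hit $\pi\Z$ at some time, it would end strictly above that level, contradicting periodicity.) So $y_1,y_2$ are pointwise independent.

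\textbf{Step 3: item (iii), counting.} We use a homotopy argument. For $S\equiv0$, $A=-i\frac{d}{dt}$ has eigenvalues $2\pi k$, each with eigenspace $\{e^{2\pi ikt}v\}$ of real dimension 2 and winding $k$. So the statement holds for this model. Now deform $S_s=sS_\infty$ for $s\in[0,1]$. The operators form a continuous family of self-adjoint operators with compact resolvent, so eigenvalues vary continuously counting multiplicity, via Kato perturbation theory.

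Winding is locally constant along continuous eigenvalue branches. Indeed, normalized eigenvectors converge in $H^1\subset C^0$ and are nonvanishing, so the winding cannot jump. Thus the number of eigenvalues with winding $k$, counted with multiplicity, is constant in $s$. This requires that eigenvalue branches do not escape to infinity or merge across winding classes. Ruling out escape uses uniform resolvent bounds. Merging is harmless because (i) shows that equal eigenvalues have equal winding.

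\textbf{Main obstacle.} The delicate point is the continuity and counting in Step 3. Rather than relying on abstract perturbation theory, I would first try a direct argument via the rotation function. For each $\mu$, consider the $\theta$-ODE flow over time 1 and its rotation interval. An eigenvalue with winding $k$ corresponds exactly to a fixed point of the time-1 map of the $\theta$ lift with shift $2\pi k$. Monotonicity in $\mu$ then yields precisely two such $\mu$, counting double ones: one for the minimum and one for the maximum over the initial angle of $\theta(1)-\theta(0)$ equal to $2\pi k$. This gives existence and the count simultaneously.
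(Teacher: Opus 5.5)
The paper gives no proof of this statement; it quotes it from \cite{prop2}, where item (iii) is obtained by deforming $S_\infty$ to $0$ and following eigenvalue branches with Kato's analytic perturbation theory. That is essentially your Step~3.

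\textbf{Steps 1 and 2.} Both are correct. In Step~1 only the ``more carefully'' dichotomy is valid: a combination $y_1+cy_2$ vanishing at $t_0$ exists only if $y_1(t_0),y_2(t_0)$ are already dependent. The comparison argument in Step~2 rests on one observation: whenever $\delta\in\pi\Z$, the $\pi$-periodicity of $f_\mu$ in $\theta$ gives $\dot\delta=\mu_2-\mu_1>0$. This is more elementary than extracting (ii) from the deformation, where one argues that branches of different winding cannot cross by (i). It also gives the pointwise independence directly.

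\textbf{Step 3, homotopy route.} Continuity of the eigenvalues as a multiset is not enough. To transport the winding you need eigenvectors that stay continuous through crossings. Use Rellich's theorem (Kato, \emph{Perturbation Theory for Linear Operators}, Thm.~VII.3.9). The family $A_s=-i\frac{d}{dt}-sS_\infty$ is a self-adjoint holomorphic family of type (A) with compact resolvent. Hence there are analytic $\mu_n(s)$ and analytic $\phi_n(s)$ which, for every $s\in[0,1]$, list all eigenvalues with multiplicity and form a complete orthonormal eigenbasis. Each $\phi_n(s)$ is nowhere vanishing and continuous in $s$ in $H^1\subset C^0$, so its winding is constant along the branch. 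Exactly two branches have winding $k$ at $s=0$, and completeness at $s=1$ gives (iii). Escape and merging then need no separate treatment.

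\textbf{Step 3, rotation-function shortcut.} This is wrong as stated. Let $\Psi_\mu(t)$ be the fundamental solution of $\dot y=i(S_\infty+\mu)y$; it is a path in $\mathrm{Sp}(2)$.
\begin{itemize}
\item A fixed point with shift $2\pi k$ of $\theta_0\mapsto\theta(1;\theta_0)$ only means $\Psi_\mu(1)v=cv$ for some $c>0$. But $\mu\in\mathrm{spec}(A_\alpha)$ requires $c=1$.
\item For positive hyperbolic $\Psi_\mu(1)$ such fixed points exist, yet $\mu$ is not an eigenvalue.
\item Let $a(\mu),b(\mu)$ be the minimum and maximum over $\theta_0$ of $(\theta(1;\theta_0)-\theta_0)/2\pi$. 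With your correspondence, the winding-$k$ ``eigenvalues'' would fill the whole interval $\{\mu: k\in[a(\mu),b(\mu)]\}$, which is not discrete.
\end{itemize}

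The missing lemma uses $\det\Psi_\mu(1)=1$:
\begin{itemize}
\item If $c\neq 1$, the projectivized map has one attracting and one repelling direction. Both have shift $2\pi k$, because the displacement varies by less than $\pi$. So the displacement minus $2\pi k$ changes sign, and $a(\mu)<k<b(\mu)$.
\item If $c=1$ and $\Psi_\mu(1)=I$, then $a(\mu)=b(\mu)=k$ and the multiplicity is two.
\item If $c=1$ and $\Psi_\mu(1)\neq I$, it is a shear whose only fixed direction is $v$. The displacement minus $2\pi k$ vanishes at a single point of $\R/\pi\Z$, so it does not change sign, and $k$ is an endpoint.
\end{itemize}
Hence $\mu$ is an eigenvalue of winding $k$ if and only if $k\in\{a(\mu),b(\mu)\}$, with multiplicity two exactly when $a(\mu)=b(\mu)=k$. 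Since $a$ and $b$ are continuous, strictly increasing and tend to $\pm\infty$, there is exactly one $\mu$ with $b(\mu)=k$ and one with $a(\mu)=k$. This is the conclusion you wrote, but it follows only after this lemma.
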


It is possible to show that
$
\mu_{\rm CZ}^\tau(\alpha) = \wind_{<0}(A_\alpha) + \wind_{\geq 0}(A_\alpha),
$
where
$$
\begin{aligned}
\wind_{<0}(A_\alpha) & := \max \{\wind(\mu): \mu \in {\rm spec}(A_\alpha) \cap (-\infty,0)\},\\
\wind_{\geq 0}(A_\alpha) & := \min \{\wind(\mu): \mu \in {\rm spec}(A_\alpha) \cap [0,\infty)\}.
\end{aligned}
$$

Let $\tilde u=(a,u):(\Sigma,j) \to (\R \times M,J)$ be a $J$-holomorphic curve such that, at a positive puncture $z \in \Gamma_+$, $\tilde u$ is asymptotic to a nondegenerate and $T$-periodic Reeb orbit $\alpha$. Assume that $\alpha=\alpha_0^m$, where $\alpha_0$ is a simple $T_0$-periodic Reeb orbit ($T=mT_0)$. We can use polar coordinates $s+it \in [s_0,+\infty) \times \R / \Z$ in a punctured neighborhood of $z \in \Sigma$ and a tubular neighborhood of $\alpha_0(\R) \subset M$, with coordinates $(\theta,x,y)\in \R / T_0\Z \times B_\delta(0), \delta>0$ small, such that $\tilde u$ has the form
$$\tilde u(s,t)=(a(s,t),\theta(s,t),z(s,t)), \quad \forall (s,t) \in [s_0,+\infty) \times \R / \Z,$$
where $\theta(s,t+1)=\theta(s,t) +mT_0 \in \R,$ and $z(s,t) \in B_\delta(0) \subset \R^2 , \forall (s,t)$.
The asymptotic behavior of $\tilde u$ near $z$ is described by the following theorem. 

\begin{theorem}[Hofer-Wysocki-Zehnder \cite{prop1}] \label{reprass}
Let $\widetilde{u}=(a,\theta,z):[s_0,+\infty)\times \R/\Z \to \R^4$ be  as above. Assume further that $\widetilde{u}$ is not a trivial cylinder. Then, there are constants $c_1,c_2 \in \R$, $d>0$ and $M_{ij}>0$ such that
$
|\partial^{(i,j)}[a(s,t)-Ts-c_1]| \leq M_{ij}e^{-ds}$ and $
|\partial^{(i,j)}[\theta(s,t)-mt-c_2]| \leq M_{ij}e^{-ds}
$
for every $s \geq s_0$, $t \in \R$ and every multi-index $(i,j) \in \N \times \N$. Furthermore,
$
z(s,t)=e^{\mu s}(\varphi(t)+h(s,t)),
$
where $\varphi$ is an eigenvector of  $A_{\alpha}$ associated with an eigenvalue $\mu<0$, and $h(s,t)$ satisfies
$\partial^{(i,j)}h(s,t) \to 0$
uniformly in $t \in \R$, for all $(i,j) \in \N \times \N$. 
\end{theorem}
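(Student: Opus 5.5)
The plan is to follow the strategy of Hofer--Wysocki--Zehnder: reduce the Cauchy--Riemann equation near the puncture to a perturbed linear equation governed by the asymptotic operator $A_\alpha$, and then exploit the spectral properties of $A_\alpha$ recorded in Theorem \ref{windhwz}.

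\emph{Step 1: set-up.} By Theorem \ref{teo:Hofer}, since the puncture is positive, $u(s,\cdot)\to x_T$ in $C^\infty$ and $a\to+\infty$. Using the coordinates $(\theta,x,y)$ near $\alpha_0$, chosen so that along $\alpha_0$ the bundle $\xi$ is spanned by $\partial_x,\partial_y$ with $J\equiv i$ and $d\lambda\equiv dx\wedge dy$, I would write $\bar\partial_J\tilde u=0$ as three equations. The $\xi$-component becomes
$$z_s+J(s,t)z_t+S(s,t)z=0,\qquad J(s,t)\to i,\quad S(s,t)\to -iS_\infty(t)\ \text{ in } C^\infty.$$
The $(a,\theta)$-components say that $a_s-T$ and $\theta_t-m$ (and $a_t$, $\theta_s$) are controlled pointwise by $|z|$ and $|\nabla z|$. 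Here $m$ is the covering number of $\alpha=\alpha_0^m$ (so $T=mT_0$), $A_\alpha=-i\partial_t-S_\infty$, and nondegeneracy gives $0\notin\mathrm{spec}(A_\alpha)$. Since $\tilde u$ is not a trivial cylinder, Carleman's similarity principle shows $z(s,\cdot)\not\equiv0$ for every large $s$.

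\emph{Step 2: exponential decay.} Write the equation as $z_s=A(s)z$, where $A(s)\to A_\alpha$ is a family of operators, self-adjoint up to small errors. Let $g(s)=\tfrac12\|z(s)\|_{L^2}^2$. Using the spectral gap $\delta=\mathrm{dist}(0,\mathrm{spec}A_\alpha)>0$ I would prove the convexity estimate $g''\ge \delta^2 g$ for $s$ large. Since $g\to0$, this forces $g(s)\le Ce^{-2ds}$ for any $d<\delta$. Interior elliptic estimates on the strips $[s-1,s+1]\times S^1$ then upgrade this to $C^k$ bounds $|\partial^{(i,j)}z|\le M_{ij}e^{-ds}$. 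Substituting these into the $(a,\theta)$-equations shows that $a_s-T$, $a_t$, $\theta_s$ and $\theta_t-m$ decay exponentially. Integrating in $s$ produces the constants $c_1,c_2$ and the stated estimates for $a-Ts-c_1$ and $\theta-mt-c_2$.

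\emph{Step 3: the eigenvector representation (main obstacle).} Set $\zeta(s)=z(s)/\|z(s)\|$ and consider the Rayleigh quotient $\mu(s)=\langle A(s)\zeta,\zeta\rangle$. One has $\tfrac{d}{ds}\log\|z\|=\mu(s)$ and
$$\mu'(s)=2\|A(s)\zeta-\mu(s)\zeta\|^2+O(e^{-ds})\|A(s)\zeta\|.$$
Hence $\mu(s)$ is almost monotone and converges to some limit $\mu$. The integrability of $\|A\zeta-\mu\zeta\|^2$, combined with compactness of the resolvent, should force $\mu\in\mathrm{spec}(A_\alpha)$ and $\zeta(s)\to\zeta_\infty$ with $A_\alpha\zeta_\infty=\mu\zeta_\infty$; the decay of $z$ forces $\mu<0$. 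This is the delicate part. In particular, to obtain convergence of $\zeta$ rather than mere subsequential convergence, I expect to need an exponential rate for $|\mu(s)-\mu|$, obtained from a second differential inequality. Once this holds, $\|z(s)\|e^{-\mu s}=\exp\bigl(\int^s(\mu(\sigma)-\mu)\,d\sigma\bigr)$ converges to a positive constant. Absorbing that constant into $\varphi$ gives $z=e^{\mu s}(\varphi+h)$ with $h\to0$ in $L^2$. Finally, elliptic bootstrapping applied to the equation satisfied by $e^{-\mu s}z$ upgrades this to $\partial^{(i,j)}h\to0$ uniformly in $t$.
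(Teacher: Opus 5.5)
The paper gives no proof of this statement; it is quoted from \cite{prop1}. Your treatment of the $z$-component is essentially the Hofer--Wysocki--Zehnder argument and is fine in outline: decay from a convexity estimate using the spectral gap of $A_\alpha$, then the Rayleigh quotient $\mu(s)=\frac{d}{ds}\log\|z(s)\|$ converging at an exponential rate to an eigenvalue.

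The step that fails is your handling of $(a,\theta)$. You assert that the $(a,\theta)$-components of $\bar\partial_J\tilde u=0$ bound $a_s-T$, $a_t$, $\theta_s$, $\theta_t-m$ pointwise by $|z|+|\nabla z|$. They do not. These components are $a_s=\lambda(u_t)$ and $a_t=-\lambda(u_s)$. In your coordinates they only say that $F:=(a-Ts)+i\kappa(\theta-mt)$, with $\kappa>0$ a normalizing constant and $T=\kappa m$, satisfies an inhomogeneous Cauchy--Riemann equation $F_s+iF_t=E$ with $|E|\le C(|z|+|\nabla z|)$. That equation determines $F$ only up to holomorphic functions on the half-cylinder. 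For example, reparametrize the end by $w\mapsto w+\epsilon w^2$, where $w=e^{-2\pi(s+it)}$. This changes $a_s-T$ by a term of order $\epsilon e^{-2\pi s}\cos 2\pi t$, but changes $z$ only at order $e^{(\mu-2\pi)s}$. So no such pointwise bound can hold once $\mu<-2\pi$, and ``substitute and integrate in $s$'' does not give the first two estimates.

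What is missing is a second linear decay argument for $F$:
\begin{itemize}
\item Averaging in $t$ gives $\bar F'=\bar E$, so $\bar F\to c_1+ic_2$ exponentially.
\item The mean-zero part $G=F-\bar F$ tends to $0$, since $\nabla F\to 0$ as part of the convergence to the trivial cylinder behind Theorem \ref{teo:Hofer}.
\item $G$ solves $G_s=-iG_t+(E-\bar E)$, and $-i\partial_t$ has spectral gap $2\pi$ on mean-zero loops. So your Step~2 convexity argument (or a Fourier argument) gives exponential decay, and interior estimates handle the derivatives.
\end{itemize}

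This has to come before Step~3. The coefficients of the $z$-equation depend on $\theta$ and on $a_s,a_t$, through $\lambda(u_t)$ and $\lambda(u_s)$. So the exponential convergence $J\to i$, $S\to S_\infty$ behind the $O(e^{-ds})$ error in your formula for $\mu'(s)$ rests on these estimates. So does the exponential rate for $\mu(s)\to\mu$, which you need to write $z=e^{\mu s}(\varphi+h)$. The qualitative decay of $z$ in Step~2 does not depend on them, since phase shifts do not change the spectrum of $A_\alpha$. With this step supplied, the rest of your plan goes through.

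A minor point: $g''\ge\delta^2 g$ yields only $g\le Ce^{-\delta s}$, i.e.\ rate $\delta/2$ for $\|z\|$. This is harmless, since any positive rate suffices.
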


The eigenvalue $\mu<0$ and a $\mu$-eigensection $\varphi$ as in Theorem \ref{reprass} are called the leading eigenvalue and a leading eigensection of $\tilde u$ near $z\in \Gamma_+$, respectively. If $z\in \Gamma_-$, then an analogous statement describing $\tilde u$ near $z$ holds for a leading eigenvalue $\mu>0$.

Fixing a symplectic trivialization $\tau$ of $u^*\xi$, denote also by $\tau$ the induced trivialization of $\xi$ along the nondegenerate asymptotic limits $\alpha_z, z\in \Gamma,$ of $\tilde u$. Let $\mu_z$ be the leading eigenvalue  of $A_{\alpha_z}$ as in Theorem \ref{reprass}. Define
\begin{equation}
\wind_{\infty}(u;z):=\wind(\mu_z),
\label{eq:windinfty}
\end{equation}
and
$$
{\rm wind}(\tilde u):= \sum_{z\in \Gamma_+} \wind(\tilde u,z) - \sum_{z\in \Gamma_-} \wind(\tilde u,z).
$$
It can be shown that ${\rm wind}(\tilde u)$ does not depend on $\tau$, see \cite{prop2}.

\subsection{The linearization of the Cauchy-Riemann Operator}\label{sec_linear}

Let $\lambda$ be a nondegenerate contact form on a closed three-manifold $M$, and let $J \in \mathcal{J}(\lambda)$. Let $(\Sigma,j)$ be the Riemann sphere and $\Gamma \subset \Sigma$ a finite set. Let $u:(\dot{\Sigma},j) \to (W:=\R \times M, J)$ be an immersed $J$-holomorphic curve satisfying
\begin{itemize}
     \item[(i)] $\#\Gamma \geq 3;$
     \item[(ii)] $u$ is an immersion;
    \item[(iii)] $\int_{\dot \Sigma} u^* d\lambda >0$. 
   
\end{itemize}

Let 
$
F:=\overline{\rm Hom}_{\C}\left(T\dot{\Sigma},u^*TW\right) \to \dot{\Sigma},
$ 
be the vector bundle over $\dot \Sigma$ consisting of all complex anti-linear maps $(T\dot{\Sigma},j) \to (u^*TW,J)$. Choose an $\R$-invariant symmetric connection $\nabla$ on $\R \times M$, and
consider the Cauchy-Riemann operator
$D_u: \Gamma(u^*TW) \to \Gamma(F)$
 defined as
\begin{equation}\label{eq:opDu}
     D_u \eta:=\nabla \eta + J(u) \circ \nabla \eta \circ j + (\nabla_\eta J)\circ du \circ j, \quad \forall \eta \in \Gamma(u^*TW).  
\end{equation}

Let $T_u,N_u \subset u^*TW$ be the tangent and normal bundles over $u$ equipped with the complex structure induced by $J$, so that
$
u^*TW = T_u \oplus N_u.
$
By Theorem \ref{reprass}, it is possible to identify $N_u\equiv u^*\xi$ near the punctures in $\Gamma$.
Following \cite{W10}, the normal Cauchy-Riemann operator at $u$
$
D_u^N:W^{1,p}(N_u)  \to L^p(\overline{\rm Hom}_{\C}(T\dot{\Sigma},N_u))$
is regarded as a restriction and projection of $D_u$ to $N_u$.

Let $z^* \in \Gamma$ be a puncture where $u$ is asymptotic to $\alpha\in \P(\lambda)$. In a punctured neighborhood of $z^*$ in $\Sigma$, with cylindrical coordinates $(s,t)\in Z_+ := [0,+\infty)\times \R / \Z$, and a suitable trivialization of $N_u$, the operator $D_u^N$  writes as
$D_u^N = \bar{\partial}+S,$
where $\bar{\partial}=\partial_s+J_0\cdot \partial_t$ and $S \in C^{\infty}(Z_{\pm},{\rm End}(\R^{2}))$. Moreover,  $S(s,\cdot)$ converges in $C^\infty(\R / \Z)$ to a  loop $S_\infty$ of symmetric matrices.

The assumptions (i), (ii), and (iii) above imply that 
$$
\begin{aligned}
    \ind(D_u^N) & =\ind(u),\\
    \dim \ker (D_u^N) & = \dim \ker (D\bar{\partial}_J(u,j)), \\
    \dim \coker (D_u^N) & = \dim \coker (D\bar{\partial}_J(u,j)),
\end{aligned}
$$
where $\bar \partial_J:\mathcal{B} \to \mathcal{E}:(j,u) \mapsto du + J \circ du \circ j$ is a smooth section of a certain Banach space bundle,  and $D\bar \partial_J: T\mathcal{B} \to T \mathcal{E}$ is its linearization. See  \cite[Theorem 3]{W10}.

Let
$
F^N:=\overline{\rm Hom}_{\C}(T\dot{\Sigma},N_u) \to \dot{\Sigma}.
$ 
The formal adjoint of $D_u^N$ is the linear map
$(D_u^{N})^*: \Gamma(F^N) \to \Gamma(N_u)$
satisfying
$$
\langle \zeta, D_u\eta\rangle_{L^2(F^N)}=\langle (D_u^{N})^*\zeta, \eta\rangle_{L^2(N_u)}, \quad \forall \eta \in C_0^{\infty}(N_u), \quad \forall \zeta \in C_0^{\infty}(F^N).
$$
For suitable choices of trivializations of $N_u$ and holomorphic coordinates $(s,t)\in \dot \Sigma$, $(D_u^N)^*$ expresses as
$(D_u^{N})^*=-\partial+S^T,$
where $\partial:=\partial_s-J_0 \cdot \partial_t$, and $S^T \in {\rm End}(\R^2)$, see \cite{W16} for more details. 
The operator $(D_u^{N})^*$ is also Fredholm when defined as 
$
(D_u^{N})^*: W^{k,p}(F^N) \to W^{k-1 ,p}(N_u).
$
Its kernel is contained in $W^{m,q}(F^N)$ for all $m \in \N$ and $q \in (1, \infty),$ and
$W^{k-1,p}(F^N)={\rm Im} (D_u^N) \oplus \ker (D^{N}_u)^*.$
If $$C:= \left(\begin{array}{cc}
1 & 0\\
0 & -1
\end{array}\right)\in {\rm End}(\R^{2}),$$ then
$C^{-1}(D_u^N)^*C=-(\bar{\partial} + \bar{S}),$
where $\bar{S}(s,t):=-CS(s,t)^TC$. 

Recall that the asymptotic operator over $\alpha$ writes as $A_{\alpha}=-J_0\frac{d}{dt}-S_ {\infty}$, where $J_0$ corresponds the multiplication by $i$, $S_\infty=\lim_{s\to \infty} S(s,\cdot)$ is symmetric, and $(s,t) \in Z_+$ are suitable holomorphic coordinates near $z^*\in \Gamma.$  We have
$\lim_{s \to +\infty}\bar S(s,\cdot) = \bar{S}_{\infty},$
  where $\bar{S}_{\infty}(t)=-CS_{\infty}(t)C$. Thus the limiting operator  $\bar{A}_{\alpha}:=-J_0\frac{d}{dt}-\bar{S}_{\infty}$ conjugates with $-A_\alpha$, that is
\begin{equation}
C^{-1}\bar{A}_{\alpha}C=-A_{\alpha}.
\label{eq:cac1}
\end{equation}
In particular, ${\rm spec}(\bar A_\alpha) = - {\rm spec}(A_\alpha)$. 
The following proposition is straightforward. 

\begin{prop} \label{AbarA} 
Let ${\rm Aut}(\mu, A_{\alpha})$ be the linear space generated by the $\mu$-eigensections of $A_\alpha$, $\mu \in {\rm spec}(A_\alpha)$. Then $\varphi \in {\rm Aut}(\mu, A_ {\alpha})$ if and only if $C\varphi \in {\rm Aut}(-\mu, \bar{A}_{\alpha})$. 
\end{prop}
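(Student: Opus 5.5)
The plan is to work directly from the conjugation identity \eqref{eq:cac1}, $C^{-1}\bar A_\alpha C=-A_\alpha$, and not to pass through any spectral theory. Since $C$ is a constant invertible matrix with $C^{-1}=C$, multiplication by $C$ is a linear isomorphism of $H^1(\R/\Z,\R^2)$ and of $L^2(\R/\Z,\R^2)$. It therefore maps the domain of $A_\alpha$ onto the domain of $\bar A_\alpha$, and this is the point to record first.

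For the forward implication, let $\varphi$ be a $\mu$-eigensection of $A_\alpha$, so $A_\alpha\varphi=\mu\varphi$. Applying $C$ to \eqref{eq:cac1} gives $\bar A_\alpha C=-CA_\alpha$, and hence
\[
\bar A_\alpha(C\varphi)=-C A_\alpha\varphi=-\mu\, C\varphi .
\]
Thus $C\varphi$ is a $(-\mu)$-eigensection of $\bar A_\alpha$. Since ${\rm Aut}(\mu,A_\alpha)$ is spanned by such eigensections and $C$ is linear, we get $C\,{\rm Aut}(\mu,A_\alpha)\subset {\rm Aut}(-\mu,\bar A_\alpha)$.

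For the converse, suppose $\bar A_\alpha\psi=-\mu\psi$ with $\psi=C\varphi$. Then \eqref{eq:cac1} gives
\[
A_\alpha\varphi=-C^{-1}\bar A_\alpha C\varphi=-C\bar A_\alpha\psi=\mu\, C\psi=\mu\varphi .
\]
So $\varphi\in{\rm Aut}(\mu,A_\alpha)$, which establishes the equivalence. It also shows that $C$ restricts to an isomorphism between the two eigenspaces, consistent with ${\rm spec}(\bar A_\alpha)=-{\rm spec}(A_\alpha)$.

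The only point needing care, and it is mild, is the validity of \eqref{eq:cac1} itself. One checks that $CJ_0C=-J_0$, so that
\[
C^{-1}\Bigl(-J_0\tfrac{d}{dt}\Bigr)C=J_0\tfrac{d}{dt},
\]
and that
\[
C^{-1}\bar S_\infty C=C\bigl(-CS_\infty C\bigr)C=-S_\infty .
\]
Together these give $C^{-1}\bar A_\alpha C=J_0\frac{d}{dt}+S_\infty=-A_\alpha$. Here $S_\infty$ is symmetric, so $S_\infty^T=S_\infty$ and $\bar S_\infty=-CS_\infty C$.
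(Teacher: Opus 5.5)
Your proof is correct and follows the route the paper intends. The paper gives no written proof and calls the proposition straightforward right after the conjugation identity $C^{-1}\bar A_\alpha C=-A_\alpha$. Your computation with $\bar A_\alpha C=-CA_\alpha$ and $C^{-1}=C$, together with your check that $CJ_0C=-J_0$ and $C^{-1}\bar S_\infty C=-S_\infty$, supplies exactly that verification.
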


The asymptotic behavior of a section $\sigma$ in the kernel of $(D_u^N)^*$ near a puncture is described as in Theorem \ref{reprass}. In fact, in polar holomorphic coordinates $(s,t) \in [0,+\infty) \times\R/ \Z$ or $(s,t)\in (-\infty,0] \times \R / Z$ near a positive or a negative puncture, respectively, we evaluate $\sigma(\partial_s)$ and see $\sigma$ as a section of the normal bundle $N_u$, which is identified with the contact structure near the ends. 

\begin{prop} \label{kerDu}
Let $\sigma\in \ker (D_u^{N})^*$. The following assertions hold.
\begin{itemize}

\item[(i)] in cylindrical coordinates $(s,t) \in (-\infty,0]\times \R / \Z$ near a negative puncture
\begin{equation}
\sigma(s,t)=e^{-\mu s}(\varphi(t)+h(s,t))
\label{eq:sigmaeq1}
\end{equation}
for some leading eigenvalue $\mu<0$  of $A_\alpha$, and a leading $\mu$-eigensection $\varphi$. Here, $h(s,t) \to 0$ in $C^\infty$ as $s \to -\infty$.

\item[(ii)] in cylindrical coordinates $(s,t) \in [0,+\infty)\times \R / \Z$ near a positive puncture
\begin{equation}
\sigma(s,t)=e^{-\mu s}(\varphi(t)+h(s,t))
\label{eq:sigmaeq2}
\end{equation}
for some leading eigenvalue $\mu>0$ of $A_\alpha$, and a leading $\mu$-eigensection $\varphi$. Here, $h(s,t) \to 0$ in $C^\infty$ as $s \to +\infty$.
\end{itemize}
\end{prop}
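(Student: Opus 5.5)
The plan is to reduce Proposition \ref{kerDu} to the asymptotic analysis of Theorem \ref{reprass}, applied to the conjugated operator $C^{-1}(D_u^N)^*C=-(\bar\partial+\bar S)$. Near a puncture $z^*$ we work in cylindrical coordinates $(s,t)$ and in the trivialization of $N_u\equiv u^*\xi$ fixed in \S\ref{sec_linear}. Evaluating $\sigma$ on $\partial_s$ makes $\sigma$ a map $Z_\pm\to\R^2$. If $\sigma\in\ker(D_u^N)^*$, then $\zeta:=C^{-1}\sigma$ solves the perturbed Cauchy--Riemann equation
$$\partial_s\zeta+J_0\partial_t\zeta+\bar S(s,t)\zeta=0,$$
with $\bar S(s,\cdot)\to\bar S_\infty=-CS_\infty C$ exponentially in $C^\infty$. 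The limiting asymptotic operator is $\bar A_\alpha=-J_0\frac{d}{dt}-\bar S_\infty$, which by \eqref{eq:cac1} is conjugate to $-A_\alpha$.

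The key step is the standard HWZ asymptotic formula for solutions of such linear equations. Any nontrivial $\zeta$ in $W^{k,p}$ on a half-cylinder (hence decaying) has the form $\zeta=e^{\nu s}(\psi(t)+r(s,t))$, where $r\to0$ in $C^\infty$ and $\psi$ is a $\nu$-eigensection of $\bar A_\alpha$. For this we use the same argument underlying Theorem \ref{reprass}: set $\gamma(s)=\|\zeta(s)\|_{L^2}^2$, show it is log-convex, and show that the Rayleigh quotient $\langle \bar A_\alpha\zeta,\zeta\rangle/\|\zeta\|^2$ converges to an eigenvalue. Indeed, rewriting the equation as $\partial_s\zeta=\bar A_\alpha\zeta+(\bar S_\infty-\bar S)\zeta$, the ratio $\zeta/\|\zeta\|$ converges to an eigenvector, and elliptic bootstrapping upgrades the convergence to $C^\infty$. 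The sign of $\nu$ is forced by integrability. At a positive puncture $s\to+\infty$, so we need $\nu<0$. At a negative puncture $s\to-\infty$, so we need $\nu>0$. Unique continuation rules out the case $\zeta\equiv0$ near the puncture, since otherwise $\sigma\equiv0$ globally; we implicitly assume $\sigma\neq0$.

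Finally, translate back. By Proposition \ref{AbarA}, $\psi$ is a $\nu$-eigensection of $\bar A_\alpha$ if and only if $C\psi$ is a $(-\nu)$-eigensection of $A_\alpha$. Setting $\mu:=-\nu$ and $\varphi:=C\psi$, we obtain
$$\sigma=C\zeta=e^{-\mu s}(\varphi+Cr).$$
At a negative puncture $\mu<0$, and at a positive puncture $\mu>0$, which gives \eqref{eq:sigmaeq1} and \eqref{eq:sigmaeq2} with $h=Cr$.

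The main obstacle is the precise asymptotic lemma for the linear equation. It requires exponential convergence of $\bar S$, which is inherited from the exponential convergence of $u$ to $\alpha$ (Theorem \ref{reprass}), and it requires the differential-inequality argument for $\gamma$. We would cite the linear version in \cite{prop1,W16} rather than reprove it.
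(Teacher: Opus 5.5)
Your proposal is correct and follows the paper's argument, which the paper gives only implicitly: conjugate by $C$ to obtain $-(\bar\partial+\bar S)$, whose asymptotic operator $\bar A_\alpha$ is conjugate to $-A_\alpha$; apply the Hofer--Wysocki--Zehnder asymptotic formula as in Theorem~\ref{reprass}; then transfer the eigenvalue and eigensection back via Proposition~\ref{AbarA}, which gives exactly the sign flip $\mu=-\nu$. Your additional points are refinements the paper leaves tacit and are appropriate: the need for exponential convergence of $\bar S$ to obtain a pure exponential $e^{-\mu s}$, and the assumption $\sigma\not\equiv 0$, justified by unique continuation.
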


\begin{definition}
Let $\sigma \in \ker (D_u^{N})^* \setminus \{0\}$. For every $z \in \Gamma$, we define
 $\wind(\sigma;z):=\wind(\mu_z),$ where $\mu_z$ is the leading eigenvalue in the asymptotic representation of $\sigma$ near $z$ as in Proposition \ref{kerDu}. Here, $\tau$ is a symplectic trivialization of the contact structure along the asymptotic limit at $z$ induced by a fixed trivialization of the normal bundle $N_u$. Let
     $${\text wind}_{\infty}(\sigma):=\sum_{z \in \Gamma_{+}}\wind(\sigma;z)-\sum_{z \in \Gamma_{-}} \wind(\sigma;z).$$  Then ${\rm wind}_\infty(\sigma)$ does not depend on $\tau$.
     Finally, denote by $\sharp \sigma$ the algebraic sum of the zeros of $\sigma$. Notice that $\#\sigma$ is finite since $\sigma$ does not vanish near the punctures and each zero of $\sigma$ is isolated.
    
\end{definition}

\begin{lemma}[{\cite[Proposition 5.6]{prop2}}] \label{zeroswindcf}
Let $\sigma \in \ker (D_u^N)^*$. If $\sigma$ does not vanish identically, then $\sharp \sigma = {\rm wind}_{\infty}(\sigma)+\chi(\Sigma)-\sharp \Gamma\leq0$.
\end{lemma}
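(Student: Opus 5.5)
The plan is to follow the argument of \cite[Proposition 5.6]{prop2}. There are two independent parts. First, an \emph{identity} expressing the algebraic count of zeros of $\sigma$ through its asymptotic windings and the topology of $\dot\Sigma$. Second, a \emph{sign statement}: every zero of $\sigma$ has negative local degree. Together these give $\sharp\sigma\le 0$.

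For the sign statement I would use the conjugation by $C$ recalled above. Write $\sigma$ locally in the chosen trivialization. Then $\hat\sigma:=C^{-1}\sigma$ satisfies $(\bar\partial+\bar S)\hat\sigma=0$, which is a genuine Cauchy--Riemann type equation. By the Carleman similarity principle, $\hat\sigma$ is locally $\Phi\cdot f$, with $\Phi$ a continuous invertible matrix function and $f$ holomorphic. Since $\sigma\not\equiv 0$, unique continuation shows $\sigma$ vanishes on no open set. Hence the zeros of $\hat\sigma$ are isolated with positive local degree. Because $C$ is complex conjugation, hence orientation reversing on $\R^2$, the zeros of $\sigma=C\hat\sigma$ are the same points with negative local degree. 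By Proposition \ref{kerDu}, $\sigma$ has no zeros near the punctures. So there are finitely many zeros, each contributing negatively, and $\sharp\sigma\le 0$.

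For the identity I would truncate $\dot\Sigma$ to a compact surface $\Sigma_R$ whose boundary circles $\{s=\pm R\}$ lie in the cylindrical ends. On $\Sigma_R$ I would count zeros of $\sigma$ as a section of $F^N=\overline{\rm Hom}_{\C}(T\dot\Sigma,N_u)$. The count is the sum of winding numbers along the boundary circles, relative to a global trivialization of $F^N$ over $\Sigma_R$. Such a trivialization is built from a fixed global trivialization $\tau$ of $N_u$ and a trivialization of $T\dot\Sigma$.

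Near a puncture, evaluating $\sigma$ on $\partial_s$ identifies $\sigma$ with a section of $N_u\cong\xi$. Its winding in $\tau$ is $\wind(\sigma;z)$, by the asymptotic formulas of Proposition \ref{kerDu}. The boundary orientation gives the sign $+$ at positive punctures and $-$ at negative ones. The remaining contribution compares the frame $\partial_s$ with the global frame of $T\dot\Sigma$. Since $\partial_s$ is a nonvanishing vector field near the ends, Poincar\'e--Hopf gives exactly $\chi(\dot\Sigma)=\chi(\Sigma)-\sharp\Gamma$. This yields $\sharp\sigma={\rm wind}_\infty(\sigma)+\chi(\Sigma)-\sharp\Gamma$.

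The main obstacle is consistent bookkeeping of orientations and signs. One must track the anti-linearity of $F^N$, which reverses orientation relative to $T\dot\Sigma$ and makes the sign of the tangent correction come out as $+\chi$. One must also reconcile the winding of $\sigma$, measured as a section of $N_u$, with the winding of eigenvectors of $A_\alpha$ versus $\bar A_\alpha$ through Proposition \ref{AbarA}. I would fix these signs by checking the formula on a trivial example, such as a model operator on a cylinder.
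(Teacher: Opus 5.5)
Your proposal is correct and is essentially the argument behind the paper's citation of \cite[Proposition 5.6]{prop2}, adapted from the complex-linear section $\pi\circ du$ to the anti-linear section $\sigma$. The two ingredients match. First, the similarity principle applied to $C^{-1}\sigma$, which solves $(\bar\partial+\bar S)C^{-1}\sigma=0$, makes the zeros of $\sigma$ isolated and of negative local degree. Second, a boundary-winding/Poincar\'e--Hopf count on the truncated surface gives $\sharp\sigma={\rm wind}_\infty(\sigma)+\chi(\dot\Sigma)$. Your sign bookkeeping is also right: the boundary orientation gives $+$ at positive ends and $-$ at negative ends, and anti-linearity, $\sigma(f\cdot e)=\bar f\,\sigma(e)$, makes the tangent correction enter as $+\chi(\Sigma)-\sharp\Gamma$ instead of the $-\chi(\Sigma)+\sharp\Gamma$ of the complex-linear case ${\rm wind}_\pi$.
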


We use Lemma \ref{zeroswindcf} to prove the following proposition.

\begin{prop} \label{kerDu2}
Suppose $u \in \M^J_0(\alpha; \beta_1,\beta_2,\cdots,\beta_2)$ is an embedded $J$-holomorphic curve with a positive puncture $z_0$ asymptotic to an elliptic Reeb orbit $\alpha$, a negative puncture $z_1$ asymptotic to an elliptic Reeb orbit $\beta_1$, and $d$ negative punctures $z_2, \ldots, z_{d+1}$ all asymptotic to $\beta_2$, where $d > 1$. Suppose further that $\mu_{CZ}^{\tau_N}(\alpha)-\mu_{CZ}^{\tau_N}(\beta_1)=2$, and that $\beta_2$ is a contractible Reeb orbit with $\mu_{CZ}^{\tau_N}(\beta_2)=0$. Here, $\tau_N$ is a symplectic trivialization of the contact structure along the asymptotic limits $\alpha,\beta_1,\beta_2,\ldots,\beta_2$ induced by a trivialization of the normal bundle $N_u$. Then, $$ 2\leq {\text wind}_{\infty}(\sigma) \leq d \quad \mbox{ and } \quad 2-d \leq \#\sigma \leq 0. $$
Furthermore, at least two punctures $z_{i_1}, z_{i_2}$ in $\{z_2,\ldots, z_{d+1}\}$ satisfy $$
{\text wind}^{\tau_N}(\sigma;z_{i_1})={\text wind}^{\tau_N}(\sigma;z_{i_2})=0.
$$
\end{prop}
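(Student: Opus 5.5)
The plan is to combine the zero-count formula of Lemma \ref{zeroswindcf} with the asymptotic descriptions of Proposition \ref{kerDu} and the spectral winding properties of Theorem \ref{windhwz}. All winding numbers are computed in the trivialization $\tau_N$ induced by a fixed trivialization of $N_u$. Throughout, $\sigma\in\ker(D_u^N)^*$ is not identically zero.

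\textbf{Step 1: the upper bound.} The domain is the sphere with $\#\Gamma=d+2$ punctures, so $\chi(\Sigma)-\#\Gamma=-d$. Lemma \ref{zeroswindcf} then gives
$$\#\sigma={\rm wind}_\infty(\sigma)-d\le 0.$$
Hence ${\rm wind}_\infty(\sigma)\le d$. Once Step 2 is done, $2-d\le\#\sigma\le 0$ follows immediately.

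\textbf{Step 2: the lower bound ${\rm wind}_\infty(\sigma)\ge 2$.} This is a puncture-by-puncture estimate using $\mu_{CZ}^{\tau}=\wind_{<0}+\wind_{\ge 0}$ together with the monotonicity in Theorem \ref{windhwz}.

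\emph{At $z_0$.} By Proposition \ref{kerDu}(ii), the leading eigenvalue of $\sigma$ at the positive puncture is positive. Monotonicity of winding then gives $\wind(\sigma;z_0)\ge \wind_{\ge0}(A_\alpha)$. Since $\alpha$ is elliptic, $\mu_{CZ}^{\tau_N}(\alpha)=2k+1$ is odd, and by Theorem \ref{windhwz}(iii) we have $\wind_{<0}(A_\alpha)=k$ and $\wind_{\ge0}(A_\alpha)=k+1$. So $\wind(\sigma;z_0)=k+1+a$ with $a\ge0$.

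\emph{At $z_1$.} The leading eigenvalue is negative, so $\wind(\sigma;z_1)\le\wind_{<0}(A_{\beta_1})$. Since $\beta_1$ is elliptic with $\mu_{CZ}^{\tau_N}(\beta_1)=2k-1$, we have $\wind_{<0}(A_{\beta_1})=k-1$. Write $\wind(\sigma;z_1)=k-1-b$ with $b\ge0$.

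\emph{At $z_i$, $i\ge2$.} Here $\mu_{CZ}^{\tau_N}(\beta_2)=0$ is even, so $\beta_2$ is hyperbolic. Then $\wind_{<0}(A_{\beta_2})=\wind_{\ge0}(A_{\beta_2})=0$, because the two windings are equal and sum to $0$. Hence $w_i:=\wind(\sigma;z_i)\le 0$.

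Summing these contributions gives
$${\rm wind}_\infty(\sigma)=2+a+b+\sum_{i=2}^{d+1}(-w_i)\ge 2.$$

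\textbf{Step 3: the two zero windings.} Combining the identity above with Step 1 yields
$$\sum_{i=2}^{d+1}(-w_i)\le d-2-a-b\le d-2,$$
where the $d$ summands $-w_i$ are nonnegative integers. Suppose at most one of them vanished. Then at least $d-1$ summands would be $\ge1$, so the sum would be at least $d-1$, a contradiction. Therefore at least two punctures $z_{i_1},z_{i_2}$ satisfy $\wind(\sigma;z_{i_j})=0$.

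\textbf{Main obstacle.} The only delicate point is bookkeeping of conventions. First, one must check that the sign conventions in Proposition \ref{kerDu} indeed put the leading eigenvalue of $\sigma$ in the positive part of the spectrum at $z_0$ and in the negative part at $z_1,\dots,z_{d+1}$. Second, one must check that the Conley--Zehnder indices in $\tau_N$ decompose into windings as stated. The hypothesis that $\mu^{\tau_N}_{CZ}(\alpha)-\mu^{\tau_N}_{CZ}(\beta_1)=2$ is used only in Step 2, to align $k$ for $\alpha$ and $\beta_1$. Contractibility of $\beta_2$ is not needed beyond fixing its index in $\tau_N$.
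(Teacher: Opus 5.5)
Your proposal is correct and follows essentially the same route as the paper. You get the upper bound ${\rm wind}_\infty(\sigma)\le d$ from Lemma \ref{zeroswindcf} with $\chi(\Sigma)-\#\Gamma=-d$. You get the lower bound from the puncture-wise estimates ${\rm wind}^{\tau_N}(\sigma;z_0)-{\rm wind}^{\tau_N}(\sigma;z_1)\ge 2$, which use the elliptic index formulas, together with ${\rm wind}^{\tau_N}(\sigma;z_i)\le {\rm wind}^{\tau_N}_{<0}(A_{\beta_2})=0$. The paper then uses the same counting argument you do: at most $d-2$ of the ends at $\beta_2$ can have negative winding, so at least two have winding zero.
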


\begin{proof}
By Lemma \ref{zeroswindcf}, 
${\rm wind}_{\infty}(\sigma) = \sharp \sigma -\chi(\Sigma)+\sharp \Gamma$
and $\sharp \sigma \leq 0$. Hence,
${\rm wind}_{\infty}(\sigma) \leq -2+(d+2) = d.
$
 On the other hand,
${\rm wind}_{\infty}(\sigma)={\rm wind}^{\tau_N}(\sigma;z_0)-\sum_{i=1}^{d+1} {\rm wind}^{\tau_N}(\sigma;z_i). 
 $
Since $\alpha$ and $\beta_1$ are elliptic, we have
$
\mu_{CZ}^{\tau_N}(\alpha)=2 {\rm wind}^{\tau_N}_{\geq 0}(A_{\alpha})-1$ and $\mu_{CZ}^{\tau_N}(\beta_1) = 2{\rm wind}^{\tau_N}_{<0}(A_{\beta_1})+1.
$
Hence
$$
2= \mu_{CZ}^{\tau_N}(\alpha)-\mu_{CZ}^{\tau_N}(\beta_1)=2({\rm wind}^{\tau_N}_{\geq 0}(A_{\alpha})-{\rm wind}^{\tau_N}_{<0}(A_{\beta_1}))-2,
$$
and thus
$2= {\rm wind}^{\tau_N}_{\geq 0}(A_{\alpha})-{\rm wind}^{\tau_N}_{<0}(A_{\beta_1}) \leq {\rm wind}^{\tau_N}(\sigma; z_0) -{\rm wind}^{\tau_N}(\sigma; z_1).$

We also have
$$
{\rm wind}^{\tau_N}(\sigma;z_i) \leq {\rm wind}^{\tau_N}_{<0}(A_{\beta_2})=\frac{\mu_{CZ}^{\tau_N}(\beta_2)}{2}=0, \quad \forall i.
$$
Using the inequalities above, we conclude that
$$
{\rm wind}_{\infty}(\sigma) = {\rm wind}^{\tau_N}(\sigma;z_0) - {\rm wind}^{\tau_N}(\sigma;z_1) - \sum_{i=2}^{d+1} {\rm wind}^{\tau_N}(\sigma;z_i) \geq 2.
$$
The desired inequalities for ${\rm wind}_\infty(\sigma)$ and $\#\sigma$ follow.  

Finally, if $d-1$ punctures $z_{i_1}, \ldots, z_{i_{d-1}}$ in the set $\{z_2, \ldots, z_{d+1}\}$ satisfy ${\rm wind}^{\tau_N}(\sigma;z_{i_j})<0$, then
$
{\rm wind}_{\infty}(\sigma) \geq 2+d-1=d+1,
$
a contradiction.  
\end{proof}

\section{Holomorphic Buildings} \label{sec:index}
In this section, we list all possible holomorphic buildings of Fredholm indices 1 and 2 associated with a weakly convex nondegenerate contact form and a generic $J$. In addition, we decide which of these buildings may appear as the limit of a sequence of an index-$2$ family of holomorphic cylinders. 

\subsection{Buildings of index 1 and 2}
We start recalling some useful results from \cite{HN,HT07,W16}.

\begin{lemma}[Hutchings-Taubes \cite{HT07}] \label{1.7} 
Let $(M,\lambda)$ be a contact $3$-manifold, $J \in \mathcal{J}(\lambda)$, and $v:\Sigma \setminus \Gamma \to \R \times M$ a $J$-holomorphic curve. If $v$  multiply covers a trivial cylinder, then $\ind(v)\geq 0$.
\end{lemma}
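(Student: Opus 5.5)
We write $v = v_0\circ\phi$, where $v_0$ is the trivial cylinder over a simple Reeb orbit $\gamma$ and $\phi:\dot\Sigma\to\R\times\R/\Z$ is a holomorphic branched cover of degree $d$. The plan is to compute $\ind(v)$ directly from the index formula and show it is nonnegative. Every puncture of $\dot\Sigma$ maps to one of the two ends of the cylinder. So the positive punctures are asymptotic to covers $\gamma^{p_1},\dots,\gamma^{p_k}$ with $\sum p_i=d$, and the negative punctures are asymptotic to $\gamma^{q_1},\dots,\gamma^{q_l}$ with $\sum q_j=d$.

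We use the trivialization $\tau$ of $\xi$ along $\gamma$ and its induced trivializations on the iterates. Since $v^*\xi=\phi^*(\gamma^*\xi)$ is pulled back from a bundle over the cylinder that is globally trivialized by $\tau$, we get $c_\tau(v)=0$. Hence
$$\ind(v)=-\chi(\dot\Sigma)+\sum_{i=1}^k\mu_{CZ}^\tau(\gamma^{p_i})-\sum_{j=1}^l\mu_{CZ}^\tau(\gamma^{q_j}).$$
Riemann--Hurwitz for $\phi$ extended over the compactified surfaces gives
$$-\chi(\dot\Sigma)=2g-2+k+l=\text{(total branching)}+(k+l)-2d\cdot 0 \ \ge\ 0,$$
because $\chi$ of the cylinder is $0$. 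More precisely, $\chi(\Sigma)=2d-b$, where $b$ counts branching including at the punctures. One computes $-\chi(\dot\Sigma)=b_{\rm int}$, the number of interior branch points counted with multiplicity, and this is $\ge 0$. So it suffices to show that
$$\sum_i\mu_{CZ}^\tau(\gamma^{p_i})-\sum_j\mu_{CZ}^\tau(\gamma^{q_j})\ \ge\ -\,b_{\rm int}.$$
In fact we will show this Conley--Zehnder difference is $\ge 0$ whenever the branching is trivial, and in general bounded below by $-(k+l-2)$, using Riemann--Hurwitz again.

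We treat two cases. In the hyperbolic case $\mu_{CZ}^\tau(\gamma^p)=p\,\mu_{CZ}^\tau(\gamma)$, so the difference equals $d\mu-d\mu=0$ and $\ind(v)=-\chi(\dot\Sigma)\ge0$. In the elliptic case with rotation number $\theta\notin\Q$ we have $\mu(\gamma^p)=2\lfloor p\theta\rfloor+1$. Using
$$\sum_i\lfloor p_i\theta\rfloor\ \ge\ \lfloor d\theta\rfloor-(k-1),\qquad \sum_j\lfloor q_j\theta\rfloor\le\lfloor d\theta\rfloor,$$
the difference is at least $2\lfloor d\theta\rfloor+k-2(k-1)-2\lfloor d\theta\rfloor-l=2-k-l$. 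It remains to check $-\chi(\dot\Sigma)=2g-2+k+l\ge k+l-2$, which is immediate since $g\ge0$. Combining, $\ind(v)\ge 2g\ge0$.

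The only delicate point is the floor-function inequality, namely superadditivity-type bounds for $\lfloor\cdot\rfloor$. Sub-additivity $\sum\lfloor x_j\rfloor\le\lfloor\sum x_j\rfloor$ handles the negative ends. At the positive ends, $\sum\lfloor x_i\rfloor>\sum x_i-k$ together with integrality gives $\ge\lfloor\sum x_i\rfloor-(k-1)$. This argument bypasses Riemann--Hurwitz entirely except through $g\ge0$, so the main risk is just bookkeeping of signs in the index formula.
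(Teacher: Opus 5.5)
Your proof is correct, and it is the standard direct index count behind the cited result: $c_\tau(v)=0$ for the pulled-back trivialization, $-\chi(\dot\Sigma)=2g-2+k+l$, linearity of $\mu_{CZ}^{\tau}$ under iteration for hyperbolic $\gamma$, and your floor-function bounds for elliptic $\gamma$ together give $\ind(v)\geq 2g\geq 0$. The paper has no proof of its own and only cites \cite{HT07}, where I believe the lemma is proved by essentially this computation. One slip is cosmetic: in your first Riemann--Hurwitz display the term $-2d\cdot 0$ should read $-2d$, and Riemann--Hurwitz is not needed at all, since $k,l\geq 1$ already gives $2g-2+k+l\geq 2g$ in the hyperbolic case.
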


\begin{lemma}[Hutchings-Nelson \cite{HN}] \label{2.2} 
Let $(M,\lambda)$ be a 3-contact manifold, $J \in \mathcal{J}(\lambda)$, and $v:\Sigma \setminus \Gamma \to \R \times M$ a genus zero $J$-holomorphic curve, with one positive puncture, and an arbitrary number of negative punctures. If $\mathfrak{v}$ is a {\it somewhere injective} curve which is covered by $v$  with multiplicity $d$, and if $b$ is the number of branching points of that cover, counted with multiplicity, then
$$\ind(v) \geq d \cdot \ind(\mathfrak{v})+2(1-d+b).$$
\end{lemma}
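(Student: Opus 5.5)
The plan is to compare the index formula for $v$ term by term with $d$ times the index formula for $\mathfrak{v}$, using the Riemann--Hurwitz formula to control the Euler characteristic and the combinatorics of the punctures. I take $b$ to count branch points of $\varphi$ at interior points of $\dot\Sigma$; ramification at punctures is handled separately below.

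\textbf{Step 1: structure of the underlying curve.} Write $v=\mathfrak{v}\circ\varphi$ with $\varphi$ a degree-$d$ branched cover. Since $\Sigma$ has genus zero, Riemann--Hurwitz forces the domain of $\mathfrak{v}$ to have genus zero as well. Since $\varphi$ maps positive punctures to positive punctures and $v$ has only one, $\mathfrak{v}$ has exactly one positive puncture, asymptotic to some $\alpha$. Moreover $\varphi$ is totally ramified there, so $v$ is asymptotic to $\alpha^d$.

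Let $\beta_1,\dots,\beta_\ell$ be the negative asymptotic limits of $\mathfrak{v}$. Over the puncture of $\beta_j$, $v$ has $m_j$ negative punctures with covering multiplicities $q_{j,1},\dots,q_{j,m_j}$, where $\sum_i q_{j,i}=d$; these are asymptotic to $\beta_j^{q_{j,i}}$.

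Applying Riemann--Hurwitz to the compactified surfaces gives
\[
2=2d-b-(d-1)-\sum_{j=1}^{\ell}(d-m_j),
\qquad\text{that is,}\qquad
\sum_{j}(d-m_j)=d-1-b .
\]
For the punctured surfaces we have $\chi(v)=1-\sum_j m_j$ and $\chi(\mathfrak{v})=1-\ell$. Combining these gives $\chi(v)=d\chi(\mathfrak{v})-b$, consistent with the above.

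\textbf{Step 2: index bookkeeping.} Fix trivializations $\tau$ of $\xi$ along the limits of $\mathfrak{v}$ and use the induced trivializations along their iterates. Then $c_\tau(v)=d\,c_\tau(\mathfrak{v})$, because $\varphi^*\mathfrak{v}^*\xi=v^*\xi$ and $\varphi$ has degree $d$.

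The key inequalities on iterated Conley--Zehnder indices are the following.
\begin{itemize}
\item At the positive end, $\CZ(\alpha^d)\ge d\,\CZ(\alpha)-(d-1)$. This is \eqref{eq:CZ_ellpitic} in the elliptic case, with equality in the hyperbolic case.
\item At the negative ends, $\CZ(\beta^q)\le q\,\CZ(\beta)+(q-1)$. In the elliptic case this follows from $\lfloor q\theta\rfloor\le q\lfloor\theta\rfloor+q-1$; in the hyperbolic case it holds with $q-1$ replaced by $0$.
\end{itemize}
Summing the second inequality over the preimages of $\beta_j$ gives
\[
\sum_i \CZ(\beta_j^{q_{j,i}})\le d\,\CZ(\beta_j)+d-m_j .
\]

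\textbf{Step 3: combine.} Substituting into the index formula yields
\[
\ind(v)-d\,\ind(\mathfrak{v})\ \ge\ b-(d-1)-\sum_j(d-m_j)\ =\ b-(d-1)-(d-1-b)\ =\ 2(1-d+b),
\]
where the middle equality uses the Riemann--Hurwitz identity from Step 1.

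The only delicate points are bookkeeping ones. First, $b$ must count only interior branching, with ramification at punctures accounted for through $d-1$ and $d-m_j$. Second, the iterate estimates must be checked in all three orbit types. For negative hyperbolic orbits with $\CZ^\tau$ odd, the identity $\CZ(\beta^q)=q\,\CZ(\beta)$ still holds for the induced trivializations, so these cases only improve the bound. I expect the main care to be needed in correctly setting up the punctured Riemann--Hurwitz count; the rest is routine.
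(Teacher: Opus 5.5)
Your proof is correct and is essentially the standard Hutchings--Nelson argument, which the paper cites without reproducing. That argument combines Riemann--Hurwitz on the punctured domains, $\chi(v)=d\,\chi(\mathfrak{v})-b$ (the same bookkeeping the paper uses in the proof of Proposition~\ref{A}), with $c_\tau(v)=d\,c_\tau(\mathfrak{v})$ and the iterate bounds $\CZ(\alpha^d)\ge d\,\CZ(\alpha)-(d-1)$ and $\CZ(\beta^q)\le q\,\CZ(\beta)+(q-1)$. You are also right to let $b$ count only interior branch points: counting ramification at the punctures would force $b=2d-2$ in every case, and the inequality would then fail, for example for the index-$0$ branched covers of trivial cylinders.
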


\begin{theorem}[Wendl \cite{W16}] \label{thmreg}
Let $(M,\lambda)$ be a contact $3$-manifold, $g$ a non-negative integer and $\alpha_1, \ldots, \alpha_k, \beta_1,\ldots,\beta_l$ non-degenerate Reeb orbits. Then, there is a dense subset
$
\mathcal{J}_{\rm reg}(\lambda) \subset \mathcal{J}(\lambda)
$
such that for each $J \in \mathcal{J}_{\rm reg}(\lambda)$ the  {\it somewhere injective} curves $u \in \M^J_g(\alpha_1, \ldots, \alpha_k; \beta_1,\ldots,\beta_l)$ are Fredholm-regular. Furthermore, these curves form an open subset of $\M^J_g=\M^J_g(\alpha_1, \ldots, \alpha_k; \beta_1,\ldots,\beta_l),$ which is a manifold of dimension 
$${\rm dim}(\M^J_g)=-(2-2g-k-l)+2c_{\tau}(u)+\sum_{i=1}^k \mu_{CZ}^{\tau}(\alpha_i)-\sum_{i=1}^l \mu_{CZ}^{\tau}(\beta_i)\geq 0.$$
\end{theorem}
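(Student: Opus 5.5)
The plan is the standard transversality argument via a universal moduli space and the Sard--Smale theorem, adapted to the constraint that $J\in\mathcal{J}(\lambda)$ must be $\R$-invariant. I would fix $g$ and the asymptotic orbits, and build a Banach manifold $\mathcal{B}$ of pairs $(j,u)$. Here $j$ ranges over a Teichm\"uller slice of complex structures on $\Sigma$ with fixed punctures $\Gamma$, and $u\in W^{k,p}$ maps $\dot\Sigma$ into $\R\times M$, converging at the punctures to the prescribed orbits (with the leading asymptotic parameters, i.e.\ the $\R$-shift and $t$-rotation, included as finite-dimensional directions). Next I would take the Banach manifold $\mathcal{J}^\ell$ of $C^\ell$ structures in $\mathcal{J}(\lambda)$. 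Its tangent space consists of $\R$-invariant sections $Y$ of $\mathrm{End}(\xi)$ anticommuting with $J|_\xi$ and $d\lambda$-compatible. The section $(j,u,J)\mapsto\bar\partial_J(u,j)$ of the bundle $\mathcal{E}\to\mathcal{B}\times\mathcal{J}^\ell$ is Fredholm in the $(j,u)$ directions. By the Riemann--Roch formula for punctured surfaces with nondegenerate asymptotics, its index is $-\chi(\dot\Sigma)+2c_\tau(u)+\sum\mu_{CZ}^\tau(\alpha_i)-\sum\mu_{CZ}^\tau(\beta_i)$.

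The key step is to show that the full linearization $D\bar\partial_J(u,j)\oplus(Y\mapsto Y\circ du\circ j)$ is surjective at every somewhere injective $u$. Trivial cylinders are handled separately: for them the normal operator is $\partial_s+A_\alpha$ with $A_\alpha$ nondegenerate, so they are automatically regular of index $0$. For a non-trivial-cylinder somewhere injective $u=(a,\pi u)$, I would argue by contradiction. Suppose $\eta\neq0$ in $L^{q}$ annihilates the image. Then $\eta$ lies in the kernel of the formal adjoint, so by unique continuation its zeros are isolated. Also $\eta$ pairs trivially with all $Y\circ du\circ j$.

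Because $Y$ must be $\R$-invariant and act only on $\xi$, I need a point $z$ where three conditions hold simultaneously: $\pi_\xi\, d(\pi u)(z)\neq0$, $\pi u^{-1}(\pi u(z))=\{z\}$ up to the relevant local sheet, and $\eta(z)$ has a nonzero $\xi$-component paired against $\pi_\xi du$. At such a point I can localize an $\R$-invariant perturbation $Y$ near $\pi u(z)\in M$ to make the pairing nonzero, a contradiction. That such points exist on an open dense set is the main obstacle. I would follow Hofer--Wysocki--Zehnder's analysis here: the zeros of $\pi_\xi\,d(\pi u)$ are isolated for a curve that is not a trivial-cylinder cover. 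Moreover, somewhere injectivity of $u$ in $\R\times M$ together with asymptotic control upgrades to injectivity of $\pi u$ on an open dense set, modulo the possibility that $u$ and an $\R$-translate of $u$ intersect along open sets. That possibility forces $u$ to be $\R$-invariant, i.e.\ a trivial cylinder.

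Given surjectivity, the universal moduli space is a Banach manifold. The projection to $\mathcal{J}^\ell$ is Fredholm of the above index, so Sard--Smale yields a residual set of $C^\ell$ regular $J$. Taubes' trick (exhausting by compact energy/asymptotic-control subsets) passes to a residual, hence dense, set of smooth $J$. Intersecting over the countably many choices of $g$ and orbits gives $\mathcal{J}_{\rm reg}(\lambda)$. Openness of the somewhere injective locus in $\M^J_g$ follows because a $C^\infty_{loc}$-limit of multiply covered curves with bounded asymptotics is multiply covered. Finally, regularity makes this locus a manifold of dimension equal to the index, and this dimension is $\ge0$ whenever the locus is nonempty.
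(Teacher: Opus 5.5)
The paper gives no proof of this statement; it quotes it from Wendl \cite{W16}. Your outline is the standard Dragnev-type argument behind that citation: a universal moduli space over $\R$-invariant $J$, surjectivity at somewhere injective curves by perturbing $J$ near an injective point of the projection to $M$, then Sard--Smale and Taubes' trick. The step that fails as written is your third condition, that at a good point $z$ the annihilator $\eta$ has nonzero $\xi$-component. You list it as necessary but justify only the first two conditions. This is not a formality. Since $J\partial_a=R$ is fixed, every admissible variation gives $Y\circ du\circ j=Y\circ\pi_\xi du\circ j$, which takes values in $\Lambda^{0,1}\otimes\xi$. An $\eta$ with values in $\Lambda^{0,1}\otimes\mathrm{span}(\partial_a,R)$ on the set of injective points would therefore pair to zero with every $Y$. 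Knowing that $\eta$ has isolated zeros does not exclude this.

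What closes the gap is the coupling between the two components of $D_u$. Write $u=(a,u_M)$. Linearizing $da\circ j+u_M^*\lambda=0$ in a direction $\zeta\in\Gamma(u_M^*\xi)$ produces the zeroth-order term $\zeta\mapsto d\lambda(\zeta,\pi_\xi du\,\cdot\,)$ in the $\mathrm{span}(\partial_a,R)$-component of $D_u\zeta$. This term is a pointwise isomorphism wherever $\pi_\xi du\neq0$. So if the $\xi$-component of $\eta$ vanished on an open set, the $\xi$-component of the equation $D_u^*\eta=0$ would force the remaining component of $\eta$ to vanish there too. Unique continuation would then give $\eta\equiv0$. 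Hence $\{\pi_\xi\eta\neq0\}$ is open and dense, and it meets your set of good points. Alternatively, the reparametrization and Teichm\"uller directions lie in the image, so $\eta(z)\perp\Lambda^{0,1}\otimes du(T_z\dot\Sigma)$; and $du(T_z\dot\Sigma)\neq\xi$ on an open dense set by the contact condition.

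A smaller point concerns your passage from ``$u_M$ is non-injective on an open set'' to ``$u$ overlaps a single $\R$-translate on an open set.'' This needs two ingredients you do not state:
\begin{itemize}
\item a Baire category argument producing two embedded sheets with the same image $S\subset M$;
\item the observation that for a $J$-holomorphic graph over a surface $S$ transverse to $R$, with $T_pS=\{\ell(e)R+e:\ e\in\xi_p\}$, one has $da(v)=\ell(J\pi_\xi v)$, so the two sheets differ by a constant shift.
\end{itemize}
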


\begin{lemma}[Hutchings-Nelson \cite{HN}] \label{2.3} 
Let $(M,\lambda)$ be a 3-contact manifold, $J \in \mathcal{J}_{\rm reg}(\lambda)$, and $v:\Sigma \setminus \Gamma \to \R \times M$ a $J$-holomorphic curve with a positive end and $n$ negative ends. Suppose $v$ covers a non-trivial somewhere injective cylinder. Then $\ind(v) \geq n.$
\end{lemma}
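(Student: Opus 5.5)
The idea is to reduce everything to the underlying cylinder. We use Riemann–Hurwitz for the topology, and the rotation-number description of Conley–Zehnder indices of iterates for the index terms.

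\textbf{Setup and topology.} Let $\mathfrak{u}$ be the somewhere injective cylinder covered by $v$, with positive end at $\alpha$ and negative end at $\beta$, and let $d$ be the covering degree. Fix a trivialization $\tau$ of $\mathfrak{u}^*\xi$, so that $c_\tau(\mathfrak{u})=0$ and $c_\tau(v)=d\,c_\tau(\mathfrak{u})=0$ for the pulled-back trivialization. Since $v$ has exactly one positive puncture, the unique preimage of the positive end has local degree $d$, so $v$ is asymptotic to $\alpha^d$ there. The $n$ negative punctures of $v$ are asymptotic to $\beta^{d_1},\dots,\beta^{d_n}$ with $d_1+\dots+d_n=d$.

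Let $g$ be the genus of $v$ and $b$ the number of interior branch points counted with multiplicity. Riemann–Hurwitz gives $\chi(v)=d\,\chi(\mathfrak{u})-b=-b$, while $\chi(v)=1-2g-n$. Hence
\[
\ind(v)=2g+n-1+\mu^\tau_{CZ}(\alpha^d)-\sum_{i=1}^n\mu^\tau_{CZ}(\beta^{d_i}),
\]
and it suffices to prove
\[
\mu^\tau_{CZ}(\alpha^d)-\sum_{i=1}^n\mu^\tau_{CZ}(\beta^{d_i})\ \ge\ 1-2g .
\]

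\textbf{Index input.} Since $J\in\mathcal{J}_{\rm reg}(\lambda)$ and $\mathfrak{u}$ is a nontrivial somewhere injective curve, Theorem~\ref{thmreg} together with the free $\R$-action gives $\ind(\mathfrak{u})\ge 1$, that is,
\[
\mu^\tau_{CZ}(\alpha)-\mu^\tau_{CZ}(\beta)\ge 1 .
\]
Write $\theta_\alpha=\theta^\tau(\alpha)$ and $\theta_\beta=\theta^\tau(\beta)$, and recall $\mu^\tau_{CZ}(\gamma^m)=\lfloor m\theta_\gamma\rfloor+\lceil m\theta_\gamma\rceil$.

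\textbf{Key step: an integer separating the rotation numbers.} I claim there is an integer $k$ with $\theta_\beta\le k\le\theta_\alpha$. If no such $k$ existed, then either both rotation numbers lie in a common interval $(m,m+1)$, or $\theta_\alpha<\theta_\beta$. In both cases $\lfloor\theta_\alpha\rfloor+\lceil\theta_\alpha\rceil\le\lfloor\theta_\beta\rfloor+\lceil\theta_\beta\rceil$, contradicting $\mu^\tau_{CZ}(\alpha)-\mu^\tau_{CZ}(\beta)\ge 1$. An alternative route is via asymptotic windings: $\wind_\infty(\mathfrak{u})\ge 0$ together with Theorem~\ref{windhwz}. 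This step is the one I expect to need the most care, especially for hyperbolic orbits, whose rotation numbers lie in $\frac12\Z$.

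\textbf{Conclusion.} Given such a $k$, monotonicity of floor and ceiling gives
\[
\mu^\tau_{CZ}(\alpha^d)\ge 2dk \quad\text{and}\quad \mu^\tau_{CZ}(\beta^{d_i})\le 2d_ik .
\]
Summing, the difference above is at least $2dk-2dk=0$, which settles the case $g\ge 1$.

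For $g=0$ we need strict inequality somewhere. Equality in both bounds forces $d\theta_\alpha=dk$ and $d_i\theta_\beta=d_ik$, so $\theta_\alpha=\theta_\beta=k$. Then $\mu^\tau_{CZ}(\alpha)=\mu^\tau_{CZ}(\beta)=2k$, contradicting $\ind(\mathfrak{u})\ge 1$. Hence the difference is at least $1$, and $\ind(v)\ge n$ in all cases.
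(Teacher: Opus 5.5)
The paper gives no proof of this lemma; it is quoted from \cite{HN}, so there is no internal argument to compare yours against. Your argument is correct and complete.

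The Riemann--Hurwitz count $\chi(v)=-b$ reduces the claim to $\mu^\tau_{CZ}(\alpha^d)-\sum_i\mu^\tau_{CZ}(\beta^{d_i})\ge 1-2g$. The integer $k$ with $\theta_\beta\le k\le\theta_\alpha$ exists exactly as you say, from $\ind(\mathfrak u)=\mu^\tau_{CZ}(\alpha)-\mu^\tau_{CZ}(\beta)\ge 1$ and the monotonicity of $\theta\mapsto\lfloor\theta\rfloor+\lceil\theta\rceil$. No extra care is needed for hyperbolic orbits, since the iterate formula and the monotonicity hold uniformly. The quantities $\mu^\tau_{CZ}(\alpha^d)-2dk$ and $2d_ik-\mu^\tau_{CZ}(\beta^{d_i})$ are nonnegative integers. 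The first vanishes only when $\theta_\alpha=k$, the second only when $\theta_\beta=k$. Since $n\ge 1$, they cannot all vanish.

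That last step never uses $g=0$. So you actually get $\ind(v)\ge 2g+n$ in every genus, and the separate treatment of $g\ge 1$ is unnecessary.

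It is worth seeing why the rotation-number input is really needed. Take the route the paper uses for Proposition~\ref{A}: Lemma~\ref{2.2} plus Riemann--Hurwitz, with $b=n-1$ in genus zero. That only gives $\ind(v)\ge d\,\ind(\mathfrak u)+2(1-d+b)\ge 2n-d$, which falls short of $n$ as soon as some negative end is multiply covered ($d>n$). The crude iterate bounds $\mu^\tau_{CZ}(\gamma^m)\ge m\,\mu^\tau_{CZ}(\gamma)-(m-1)$ at the positive end, and the reverse inequality at negative ends, are what yield Lemma~\ref{2.2}. They treat the two ends independently. Your separating integer $k$ instead exploits the correlation between $\theta_\alpha$ and $\theta_\beta$ forced by $\ind(\mathfrak u)\ge 1$, which is the standard mechanism for handling covers of cylinders.
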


\begin{lemma}[Hutchings-Nelson \cite{HN}] \label{2.5} 
Let $(M,\lambda)$ be a 3-contact manifold, $J \in \mathcal{J}_{\rm reg}(\lambda)$ and $v:\Sigma \backslash \Gamma \to \R \times M$ a    non-trivial $J$-holomorphic cylinder. Let $\mathfrak{v}$ be the  {\it somewhere injective} cylinder covered by $v$. Then $1 \leq \ind(\mathfrak{v}) \leq \ind(v).$
\end{lemma}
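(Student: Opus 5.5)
Lemma \ref{2.5} asserts that for $J \in \mathcal{J}_{\rm reg}(\lambda)$ and a non-trivial $J$-holomorphic cylinder $v$ with underlying somewhere injective cylinder $\mathfrak v$, we have $1\le \ind(\mathfrak v)\le \ind(v)$. The plan is to prove the two inequalities separately: the lower bound from transversality plus the $\R$-action, and the upper bound from a direct computation of the index of a cover. Note first that $\mathfrak v$ really is a cylinder. If $v=\mathfrak v\circ\phi$ with $\phi$ a branched cover of degree $d$ from a cylinder, then Riemann--Hurwitz gives $\chi(\dot\Sigma)=d\,\chi(\dot\Sigma')-b$. With $\chi(\dot\Sigma)=0$ and $\mathfrak v$ having at least one positive puncture, the base must be a cylinder with $b=0$, so $\phi$ is an unbranched $d$-fold cover $z\mapsto z^d$ of cylinders.

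\emph{Lower bound.} Since $J\in\mathcal J_{\rm reg}(\lambda)$, Theorem \ref{thmreg} makes $\mathfrak v$ Fredholm regular, so its moduli space near $\mathfrak v$ is a manifold of dimension $\ind(\mathfrak v)$. The $\R$-action by translation in the $a$-coordinate acts on this moduli space. Because $\mathfrak v$ is not a trivial cylinder (its $d\lambda$-energy is positive, otherwise $v$ would be trivial), the action is free near $\mathfrak v$. Hence $\ind(\mathfrak v)=\dim\ge 1$.

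\emph{Upper bound.} Write $\mathfrak v$ with positive end at $\alpha$ and negative end at $\beta$, so that $v$ has ends $\alpha^d$ and $\beta^d$. Take a trivialization $\tau$ of $\mathfrak v^*\xi$ and pull it back to $v$. Then $c_\tau(v)=d\,c_\tau(\mathfrak v)$ and $\chi=0$ for both curves, which gives
\[
\ind(v)-\ind(\mathfrak v)=\bigl(\mu^\tau_{CZ}(\alpha^d)-\mu^\tau_{CZ}(\beta^d)\bigr)-\bigl(\mu^\tau_{CZ}(\alpha)-\mu^\tau_{CZ}(\beta)\bigr) + 2(d-1)c_\tau(\mathfrak v).
\]
It is cleaner to choose $\tau$ along $\alpha,\beta$ extending over the cylinder, so that $c_\tau(\mathfrak v)=0$. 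Then we must show
\[
\mu(\alpha^d)-\mu(\beta^d)\ \ge\ \mu(\alpha)-\mu(\beta)\ \ge 1 .
\]

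This inequality is the main obstacle, and I would settle it with the winding-number characterization. By Theorem \ref{reprass} and the positivity of intersections and zero counts from \cite{prop2}, the asymptotic winding of $\mathfrak v$ relative to $\tau$ satisfies
\[
\wind_\infty(\mathfrak v;z_+)\le \windd_{<0}(A_\alpha), \qquad \wind_\infty(\mathfrak v;z_-)\ge \windd_{\geq 0}(A_\beta),
\]
and $\wind_\infty(\mathfrak v;z_+)-\wind_\infty(\mathfrak v;z_-)=\windd_\pi\ge0$ (the standard HWZ count of zeros of $\pi\,d\mathfrak v$ on a cylinder). In terms of rotation numbers $\theta_\alpha=\theta^\tau(\alpha)$ and $\theta_\beta=\theta^\tau(\beta)$, this gives an integer $k$ with $\theta_\beta< k<\theta_\alpha$, or the hyperbolic analogue with $\le$ at integer or half-integer values. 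Then $\lfloor d\theta\rfloor+\lceil d\theta\rceil$ is monotone in $\theta$ and separated by $dk$. A case check over elliptic and hyperbolic orbits shows that the gap $\mu(\alpha^d)-\mu(\beta^d)$ is at least the gap at $d=1$.

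The exception is the case of even and odd parity, or bad orbits, where parity subtleties appear. I would handle it following the Hutchings--Nelson argument: the difference of gaps is a sum of terms $\lfloor d\theta_\alpha\rfloor-d\lfloor\theta_\alpha\rfloor\ge0$ and $d\lceil\theta_\beta\rceil-\lceil d\theta_\beta\rceil\ge0$, combined with the separating integer.
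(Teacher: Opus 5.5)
The paper gives no proof of this lemma; it is quoted from Hutchings--Nelson. Your argument is correct and follows the standard route. Regularity of $\mathfrak v$ together with the free $\R$-action gives $\ind(\mathfrak v)\ge 1$, and in a trivialization $\tau$ extending over $\mathfrak v$ the upper bound reduces to arithmetic of Conley--Zehnder indices of iterates.

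The one detour is the separating integer. You extract it from the HWZ winding bounds and $\mathrm{wind}_\pi\ge 0$, but it already follows from the lower bound you just proved. Put $A=\lfloor\theta_\alpha\rfloor$, $B=\lceil\theta_\beta\rceil$, and $\epsilon_\gamma=0$ or $1$ according as $\theta_\gamma$ is or is not an integer. Then $\mu^\tau_{CZ}(\alpha)=2A+\epsilon_\alpha$ and $\mu^\tau_{CZ}(\beta)=2B-\epsilon_\beta$, so $1\le\ind(\mathfrak v)=2(A-B)+\epsilon_\alpha+\epsilon_\beta$ forces $A\ge B$.

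The same bookkeeping replaces the case check you leave implicit and the ``exceptional'' parity/bad-orbit discussion. Since $\lfloor d\theta_\alpha\rfloor\ge dA$, $\lceil d\theta_\alpha\rceil\ge dA+\epsilon_\alpha$, $\lceil d\theta_\beta\rceil\le dB$ and $\lfloor d\theta_\beta\rfloor\le dB-\epsilon_\beta$, we get
\[
\ind(v)=\mu^\tau_{CZ}(\alpha^d)-\mu^\tau_{CZ}(\beta^d)\ \ge\ 2d(A-B)+\epsilon_\alpha+\epsilon_\beta\ \ge\ 2(A-B)+\epsilon_\alpha+\epsilon_\beta=\ind(\mathfrak v).
\]
The formula $\mu^\tau_{CZ}(\gamma^d)=\lfloor d\theta_\gamma\rfloor+\lceil d\theta_\gamma\rceil$ holds for all iterates, bad ones included, so there is no exceptional case.

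One small correction to your preliminary Riemann--Hurwitz remark. You need $\mathfrak v$ to have a negative puncture as well, not just a positive one, since a cylinder with two positive ends can doubly cover a plane with two branch points. This is automatic because $v$ has a negative end, and in any case the statement already assumes $\mathfrak v$ is a cylinder.
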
 

Given the above results, we are ready to prove the following proposition which refines Lemma 2.4 in \cite{HN}.

\begin{prop} \label{A} 
Let $(M, \lambda)$ be a contact $3$-manifold and $J\in \mathcal{J}_{\rm reg}(\lambda)$. Let $v:\Sigma \setminus \Gamma \to \R \times M$ be a genus zero finite energy $J$-holomorphic curve, with one positive puncture  and $n>1$ negative punctures. Assume that $v$ is not the  cover of a trivial cylinder. Then
\begin{equation}
\ind(v) \geq 3-n.
\label{eq:3-n}
\end{equation}
Furthermore, if equality holds in \eqref{eq:3-n}, then $v$  multiply covers  a somewhere injective curve $\mathfrak{v}$ of index 1 with one positive puncture and two negative punctures. Moreover, there are no branching points, $v$ has $n=d+1$ negative punctures and $\ind(v)=2-d$, where $d=d(v)$ is the multiplicity of the cover  $v\to \mathfrak{v}$.
\end{prop}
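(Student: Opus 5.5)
Let $\mathfrak{v}$ be the somewhere injective curve underlying $v$, with $v=\mathfrak{v}\circ\phi$ for a branched cover $\phi$ of degree $d\geq 1$ with $b$ branch points counted with multiplicity. Since $v$ has genus zero and one positive puncture, $\mathfrak{v}$ also has genus zero. It has exactly one positive puncture, because every positive puncture of $\mathfrak{v}$ lifts to at least one positive puncture of $v$. Let $m\geq 0$ be the number of negative punctures of $\mathfrak{v}$. As $v$ does not cover a trivial cylinder, $\mathfrak{v}$ is nontrivial. Since $J\in\mathcal{J}_{\rm reg}(\lambda)$, Theorem \ref{thmreg} gives $\ind(\mathfrak{v})\geq 0$. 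Moreover $\ind(\mathfrak{v})\geq 1$, because $\mathfrak{v}$ is not $\R$-invariant and the $\R$-action gives a free one-dimensional family in a manifold of dimension $\ind(\mathfrak{v})$. I would then split into cases according to $m$.

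\emph{Case $m=0$ (planes).} Here $\mathfrak{v}$ is a plane, so $v$ is a genus zero cover of a disk with one positive puncture. The positive puncture is then totally ramified of order $d$. Riemann--Hurwitz gives $n$ negative punctures of $v$ only if $\mathfrak{v}$ has negative punctures, so $n=0$, contradicting $n>1$.

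\emph{Case $m=1$ (cylinders).} Lemma \ref{2.3} gives $\ind(v)\geq n>3-n$ for $n>1$, so strict inequality holds.

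\emph{Case $m\geq 2$.} Use Lemma \ref{2.2}: $\ind(v)\geq d\,\ind(\mathfrak{v})+2(1-d+b)$. Relate $b$ to $n$ by Riemann--Hurwitz on the compactified spheres. With $\Gamma$ the punctures of $v$ and $\Gamma'$ those of $\mathfrak{v}$, we have $\chi(\dot\Sigma)=d\chi(\dot\Sigma')-b$, where $b$ counts branching at interior points only. Here $\chi(\dot\Sigma)=2-(1+n)=1-n$ and $\chi(\dot\Sigma')=1-m$, so $b=d(1-m)-(1-n)=n-1-d(m-1)$.

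I will have to check that the convention for $b$ in Lemma \ref{2.2} matches this. I expect this bookkeeping to be the main subtle point, since ramification at the punctures is absorbed into the puncture count.

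Substituting gives
\[
\ind(v)\geq d\,\ind(\mathfrak{v})+2-2d+2(n-1)-2d(m-1)=d\,(\ind(\mathfrak{v})-2m)+2n.
\]
A finer route is the direct index formula. The index of $\mathfrak{v}$ satisfies $\ind(\mathfrak{v})\geq 1$, and I would estimate $\ind(v)$ term by term from the Conley–Zehnder contributions of the iterated orbits together with the inequality \eqref{eq:CZ_ellpitic}. Alternatively, I would try to write $\ind(v)\geq d\,\ind(\mathfrak{v})+2(1-d+b)\geq d+2-2d+2b$ and use $b\geq 0$ along with $n\leq dm$, aiming to obtain
\[
\ind(v)-(3-n)\geq (d-1)(\ind(\mathfrak{v})-1)+(m-2)(\cdots)+2b\geq 0.
\]
Making this precise is the hard step, and I would first test it in the pair-of-pants case $m=2$, $b=0$, $n=2d$.

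\emph{Equality case.} Equality must force $\ind(\mathfrak{v})=1$, $m=2$ and $b=0$. With $b=0$, $m=2$ and $\chi(\dot\Sigma)=d\chi(\dot\Sigma')$, we get $1-n=-d$, so $n=d+1$. Then $\ind(v)=3-n=2-d$, which is consistent with $d\,\ind(\mathfrak{v})+2(1-d)=2-d$. I would extract these three conditions from the term-by-term inequalities: each nonnegative slack term must vanish.
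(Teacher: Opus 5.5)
Your route is the paper's: Lemma \ref{2.3} when $\mathfrak v$ is a cylinder, and Lemma \ref{2.2} together with the punctured Riemann--Hurwitz identity $n=d(m-1)+1+b$ when $m\geq 2$. Your worry about conventions is harmless: the $b$ in Lemma \ref{2.2} counts interior branching, the same quantity as in $\chi(\dot\Sigma)=d\chi(\dot\Sigma')-b$, and the paper uses it that way.

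There is a gap, however. In the main case $m\geq 2$ you never actually prove $\ind(v)\geq 3-n$; you call it the hard step and stop. The one concrete tool you propose, $n\leq dm$, cannot work. The target is a lower bound on $\ind(v)+n$, so you need a \emph{lower} bound on $n$, not an upper one. The slack expression you aim for, $(d-1)(\ind(\mathfrak v)-1)+(m-2)(\cdots)+2b$, also has the wrong coefficients.

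The missing lower bound is the Riemann--Hurwitz identity you already derived, read with $b\geq 0$. Adding it to Lemma \ref{2.2} gives
\[
\ind(v)+n-3\;\geq\; d\,\ind(\mathfrak v)+2(1-d+b)+d(m-1)+1+b-3\;=\;d\bigl(\ind(\mathfrak v)-1\bigr)+d(m-2)+3b\;\geq\;0,
\]
using $\ind(\mathfrak v)\geq 1$, $m\geq 2$ and $b\geq 0$. Your own bound $\ind(v)\geq d(\ind(\mathfrak v)-2m)+2n$ gives the same thing once you use $3n-3=3d(m-1)+3b$. After inserting $\ind(\mathfrak v)\geq 1$, this is exactly the paper's estimate $\ind(v)+n\geq 3(b+1)+d(k-2)$.

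Because the terms $\ind(\mathfrak v)-1$ and $m-2$ carry coefficient $d\geq 1$ (not $d-1$), equality forces $\ind(\mathfrak v)=1$, $m=2$ and $b=0$ for every $d$. Hence $n=d+1$ and $\ind(v)=2-d$. Together with the strict inequality you noted in the cylinder case, this gives the full equality statement. The proposed detour through Conley--Zehnder indices of iterates is unnecessary, since Lemma \ref{2.2} already encodes that computation.
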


\begin{proof}
Let $\mathfrak{v}$ be the somewhere injective curve covered by $v$. Then $\mathfrak{v}$ has one positive puncture and $k>0$ negative punctures. Let $d$ be the covering multiplicity of $v$ and $b$ the number of branching points of $v$, counted with multiplicity.

If $\mathfrak{v}$ is a non-trivial cylinder, then it follows from Lemma \ref{2.3} that $\ind(v) \geq n$. Furthermore, since $n>1$, we obtain
\begin{equation}\label{eq:ntc}
\ind(v)+n \geq 2n \geq 4,
\end{equation}
and \eqref{eq:3-n} follows in this case.

Now assume that the somewhere injective curve $\mathfrak{v}$ is not a cylinder, i.e., $k>1$. Since $J\in \mathcal{J}_{\rm reg}(\lambda)$, we have 
\begin{equation}\label{indg1}
\ind(\mathfrak{v}) \geq 1.
\end{equation}
By Lemma \ref{2.2}, we conclude that
$\ind(v) \geq 2-d+2b.$
The Riemann-Hurwitz Theorem (see \cite{Oort} for the compact case) tells us that
\begin{equation*}
\chi(v)=d \cdot \chi(\mathfrak{v})-b \Rightarrow 2-1-n=d(2-1-k)-b \Rightarrow n=d(k-1)+1 +b.
\end{equation*}
Since $b \geq 0$, $d \geq 1$ and $k>1$, we conclude that
\begin{equation}
  \ind(v)+n \geq 2-d+2b+d(k-1)+1+b
    = 3(b+1)+d(k-2)
    \geq 3.
  \label{eq:ind+n}
\end{equation}
Inequality \eqref{eq:3-n} is also proved in this case.
 
Now assume that equality holds in \eqref{eq:3-n}, that is $\ind(v) +n=3$. From \eqref{eq:ntc}, $\mathfrak{v}$ is not a cylinder. Inequalities in \eqref{eq:ind+n} give
\begin{eqnarray*}
3=\ind(v)+n \geq 3(b+1)+d(k-2) \geq 3 \quad \Rightarrow \quad 3b+3+d(k-2)=3
\end{eqnarray*}
that is,
$3b+d(k-2)=0.$
Since $b \geq 0$, $k \geq 2$ and $d \geq 1$ we conclude that $b=0$ and $k=2$. Using the Riemann-Hurwitz Theorem again, we obtain
$n=d(k-1)+1+b=d+1.
$
 Lemma \ref{2.2} implies that
\begin{eqnarray*}
 3-(d+1)=\ind(v) \geq d \cdot \ind(\mathfrak{v})+2(1-d)
\ \Rightarrow \ d \cdot \ind(\mathfrak{v}) \leq d 
\ \Rightarrow \  \ind(\mathfrak{v}) \leq 1.
\end{eqnarray*}
Using \eqref{indg1} we conclude that $\ind(\mathfrak{v})=1.$ This finishes the proof of the proposition.
\end{proof}

\begin{definition}[Holomorphic buildings] \label{def:bil}
Let $(W,\omega)$ be a symplectic manifold and $J:TW \to TW$ an almost complex structure. A holomorphic building of height $m$ is an m-tuple $v=(v_1,\ldots,v_m)$ such that $v_i:\Sigma_i \setminus \Gamma_i \to W, i=1,\ldots,m,$ is a $J$-holomorphic curve, where $\Sigma_i$ is a (possibly disconnected) Riemann surface.
Moreover, the asymptotic limits of $v_i$ at the negative ends  coincide with the asymptotic limits of $v_{i+1}$ at the positive ends.

Each  $v_i$ is a level of the building $v$. The positive and negative ends of the building $v$ are, respectively, the positive ends of $v_1$ and the negative ends of $v_m$. The genus of $v$ is  the genus of the surface obtained by gluing the negative ends of $v_i$ with the respective positive ends of $v_{i+1}$, $i=1,\ldots,m-1$. The index of $v$ is the sum of the indices of all curves in the building 
$
\ind(v) = \sum_{i=1}^m \ind(v_i).
$ Each level of $v$ has a component that is not a trivial cylinder.
 
\end{definition}


Generalizations of Propositions 2.7 and 2.8 of \cite{HN} in the weakly convex case are given by the following theorems. We start considering buildings with one positive end and no negative ends.

\begin{prop} \label{B} 
Let $\lambda$ be a nondegenerate and weakly convex contact form on $(M, \xi)$. Let $J\in \mathcal{J}_{\rm reg}(\lambda)$ and let $v=(v_1, \ldots, v_m)$ be a genus zero $J$-holomorphic building  with a positive end and no negative ends. Then:
\begin{itemize}
\item[(i)] $\ind(v) \geq 1$.
\item[(ii)] If $\ind(v)=1$, then $v$ is a plane.
\item[(iii)] If $\ind(v)=2$, then $v$ is one of the following buildings:
\begin{enumerate}[label=(iii.\arabic*)]
   \item $v$ is a plane.
   \item \label{itm:Bc2} $v=(v_1,v_2)$, where $v_1$ is an index-$1$ non-trivial cylinder and  $v_2$ is an index-$1$ plane.
\end{enumerate}
\end{itemize}
\end{prop}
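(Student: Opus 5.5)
The plan is to compute the index of each connected component of the building, using weak convexity together with the results of Section~\ref{sec:index}, and then add up. Since $v$ has genus zero, one positive end and no negative ends, the top level $v_1$ has a single component $u_1$ that is not a trivial cylinder. It has one positive puncture and $n_1\ge 0$ negative punctures. Each negative end of $v_i$ is capped off by the part of the building lying below it, and that part is again a genus zero building with one positive end and no negative ends. So the asymptotic limits at all negative punctures are contractible, and every orbit appearing in the building is contractible. Throughout, I use the capping trivializations $\tau_*$; since $c_1(\xi)|_{\pi_2(M)}=0$, the Fredholm index is then computed by
$\ind(u)=-\chi(u)+\mu_{CZ}(\alpha)-\sum_j\mu_{CZ}(\beta_j)$.

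\emph{Planes.} A plane $u$ asymptotic to $\alpha$ has $\ind(u)=-1+\mu_{CZ}(\alpha)\ge 1$ by weak convexity. Moreover $\ind(u)=1$ exactly when $\mu_{CZ}(\alpha)=2$.

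I then argue by induction on the height $m$, proving the slightly stronger statement that any such building $v$ satisfies $\ind(v)\ge\mu_{CZ}(\alpha)-1\ge 1$, where $\alpha$ is its positive end. The total index is additive under concatenation. Regrouping the building gives $\ind(v)=\ind(v_1)+\sum_j \ind(w_j)$, where $w_j$ is the sub-building hanging below the $j$-th negative end $\beta_j$ of $v_1$. This also absorbs the trivial cylinders: they have index $0$, and so does the Conley--Zehnder bookkeeping. Applying the inductive bound to each $w_j$ gives
\[
\ind(v)\ \ge\ -(1-n_1)+\mu_{CZ}(\alpha)-\sum_j\mu_{CZ}(\beta_j)+\sum_j(\mu_{CZ}(\beta_j)-1)\ =\ \mu_{CZ}(\alpha)-1 .
\]
This bound is too crude to classify the equality cases, so I will use the following sharper version. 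Write $\ind(v)=\ind(v_1)+\sum_j\ind(w_j)$ with each $\ind(w_j)\ge 1$. For $v_1$ itself:
\begin{itemize}
\item if $n_1=0$, then $v_1$ is a plane and $\ind(v_1)\ge 1$;
\item if $n_1=1$, then $v_1$ is a non-trivial cylinder, or covers one, and $\ind(v_1)\ge 1$ by Lemma~\ref{2.5} (or by Lemma~\ref{2.3} if it is a cover of a non-trivial cylinder);
\item if $n_1\ge 2$, then $\ind(v_1)\ge 3-n_1$ by Proposition~\ref{A}, since $v_1$ is not a cover of a trivial cylinder.
\end{itemize}
In the last case $\ind(v)\ge 3-n_1+n_1=3$. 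This gives (i) in every case, and shows that $\ind(v)\le 2$ forces $n_1\le 1$.

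\emph{Classification.} If $\ind(v)=1$, then $n_1=0$: otherwise $\ind(v)\ge \ind(v_1)+\ind(w_1)\ge 2$. Hence $v$ is a single plane, and the definition of a building rules out further levels. This proves (ii). If $\ind(v)=2$, either $n_1=0$, and $v$ is a plane, giving (iii.1); or $n_1=1$ with $\ind(v_1)=1$ and $\ind(w_1)=1$. By (ii), $w_1$ is then an index-$1$ plane. It remains to see that $v_1$ is an index-$1$ non-trivial cylinder. If $v_1$ were a cover of a cylinder, Lemma~\ref{2.5} still gives index $\ge 1$, so equality is allowed, and the component is a cylinder either way; this is (iii.2).

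The main obstacle I expect is the induction bookkeeping across levels. A level may contain several components: one non-trivial component together with trivial cylinders, or several non-trivial components in different branches. The sub-building $w_j$ below a given end has to be organized as a genuine building satisfying the convention that every level has a non-trivial component, so I will collapse levels that consist only of trivial cylinders before invoking the inductive hypothesis. Genus zero guarantees the tree structure, with each negative end capped by exactly one sub-building, and this is what makes the additive decomposition valid.
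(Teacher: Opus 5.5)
Your first, ``crude'' induction is correct and proves (i) on its own. In fact, by additivity of the index with capping trivializations, the total index equals $\mu_{CZ}(\alpha)-1$ exactly.

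The gap is in the sharper case analysis you use for (ii) and (iii). For $n_1\ge 2$ you assert that $v_1$ is not a cover of a trivial cylinder, and nothing justifies this. A level only has to contain a component that is not a trivial cylinder. A branched cover of the trivial cylinder over an orbit $\alpha$, with positive end $\alpha^{d_1+d_2}$ and negative ends $\alpha^{d_1},\alpha^{d_2}$, is such a component; these are exactly the top levels in Theorem~\ref{C}-(iii.3)--(iii.5). For such a $v_1$, Proposition~\ref{A} does not apply. Lemma~\ref{1.7} only gives $\ind(v_1)\ge 0$, and hence $\ind(v)\ge n_1\ge 2$. This still rescues (ii), since there are then at least two sub-buildings below, each of index $\ge 1$. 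But your claim that $\ind(v)\le 2$ forces $n_1\le 1$ is unproved. So (iii) is left with a third candidate configuration: $n_1=2$, $\ind(v_1)=0$, and both negative ends capped by index-$1$ planes. This is not vacuous a priori, since a branched cover of a trivial cylinder over an elliptic orbit can have index $0$.

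Excluding this case needs an additional input, which the paper supplies. An index-$1$ plane at $\alpha^{d_2}$ forces $\mu_{CZ}(\alpha^{d_2})=2$, which is even. Every iterate of an elliptic orbit has odd index, so $\alpha$ must be hyperbolic. Then $\mu_{CZ}^\tau(\alpha^k)=k\,\mu_{CZ}^\tau(\alpha)$ for a trivialization $\tau$ along $\alpha$, and
$\ind(v_1)=1+(d_1+d_2)\mu_{CZ}^\tau(\alpha)-d_1\mu_{CZ}^\tau(\alpha)-d_2\mu_{CZ}^\tau(\alpha)=1$,
contradicting $\ind(v_1)=0$.

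Within your own framework you could argue by parity instead. The exact form of your bound gives $\mu_{CZ}(\alpha^{d_1+d_2})=\ind(v)+1=3$. Hyperbolicity makes its parity equal to that of $\mu_{CZ}(\alpha^{d_1})+\mu_{CZ}(\alpha^{d_2})$, which is even, a contradiction.

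With this case added, the rest of your argument for (ii) and (iii) goes through.
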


\begin{proof} First observe that the genus of every level of $v$ vanishes.
 The proof of (i) is by induction on the height of $v$. Assume $m=1$. Then $v_1$ is a plane. Let $\alpha$ be the asymptotic limit of $v_1$. In particular,  $\CZ(\alpha) \geq 2$, which implies
$\ind(v_1)=\CZ(\alpha)-1 \geq 1.$
Here, $\tau$ is a symplectic trivialization of the contact structure $\xi$ along $\alpha$ induced by a trivialization of $v_1^*\xi$.

Now assume $m>1$ and that (i) holds for buildings of height up to $m-1$. Let $n$ be the number of negative ends of $v_1$. The holomorphic building $(v_2,\ldots,v_m)$ is the union of $n$ genus zero holomorphic buildings, each one of them having one positive end  and no negative end. The induction hypothesis implies that
\begin{equation}
\ind(v) \geq \ind(v_1)+n.
\label{eq:n1}
\end{equation}
Since $v$ has only one positive end, $v_1$ has only one component. Let $\mathfrak{v}_1$ be the somewhere injective curve covered by $v_1$. Consider the following three cases:
\begin{itemize}
\item {\bf Case 1.} $\mathfrak{v}_1$ is a trivial cylinder.

Since the first level is not a trivial cylinder, we have $n>1$. Lemma \ref{1.7} implies that $\ind(v_1) \geq 0$. From \eqref{eq:n1} we conclude that
$\ind(v) \geq n > 1.$

\item {\bf Case 2.} $\mathfrak{v}_1$ is a non-trivial cylinder.

Lemma \ref{2.3} implies that $\ind(v_1) \geq n$. Thus \eqref{eq:n1} implies that 
$\ind(v) \geq 2n \geq 2.$

\item {\bf Case 3.} $\mathfrak{v}_1$ is not a cylinder.

In this case, $n>1$. Proposition \ref{A} implies that
$\ind(v) \geq 3-n+n=3.$
\end{itemize}
We have $\ind(v)>1$ in all three cases. Item (i) is proved.

In the proof of (i), we observe that  $m>1$ implies $\ind(v)>1$. Hence,  $\ind(v)=1$ implies that $v$ has only one level consisting of a plane. This proves (ii).

To prove (iii), assume that $\ind(v)=2$. If $m=1$, then $v$ is a plane as in (C1). Suppose now that $m>1$. As in the proof of (i), let $n$ be the number of negative ends of $v_1$, and let $\mathfrak{v}_1$ be the somewhere injective curve covered by $v_1$. Since $\ind(v)=2$, we see from the proof of (i) that $\mathfrak{v}_1$ is necessarily a cylinder. Consider the following two cases:
\begin{itemize}
 \item  $\mathfrak{v}_1$ is a trivial cylinder over $\alpha$. Since $v_1$ is not a trivial cylinder, we have $n>1$.
 Lemma \ref{1.7} and item (i) imply that
 \begin{equation*}
 2=\ind(v) \geq \ind(v_1)+n \geq n \geq 2 \,\, \Rightarrow \,\, n=2 \ \mbox{ and } \ \ind(v_1)=0.
 \end{equation*}
 Denote by $\alpha^{d_1}$ and $\alpha^{d_2}$ the asymptotic limits of $v_1$ at its negative ends. Then $v_1$ is asymptotic to $\alpha^{d_1+d_2}$ at its positive end and $(v_2, \cdots, v_m)$ is the union of two buildings, $B_1$ and $B_2$, whose asymptotic limits are $\alpha^{d_1}$ and $\alpha^{d_2}$ at their respective positive ends. Both buildings have no negative ends.
 
 From (i), we conclude that $\ind(B_i) \geq 1, i=1,2$. Since $\ind(v_1)=0$, we obtain
 $$2=\ind(v)=\ind(B_1)+\ind(B_2)\geq 2 \,\,\Rightarrow \,\, \ind(B_1)=\ind(B_2)=1.$$
 
 In view of (ii), we conclude that $B_1$ and $B_2$ are index-$1$ planes (or one of them is an index-$1$ plane and the other has two levels with a trivial cylinder in the first level and an index-$1$ plane in the second level).  This gives us two options for the building $(v_2, \ldots, v_m)$:
 \begin{enumerate}
 \item $m=2$ and $v_2$ is formed by two index-$1$ planes.
 \item $m=3$, where $v_2$ is formed by an index-$1$ plane and a trivial cylinder, and $v_3$ is an index-$1$  plane.
   
 \end{enumerate}
 In both cases, $\alpha^{d_2}$ is the asymptotic limit of an index-$1$ plane. We thus have
 $\CZ(\alpha^{d_2})=2,$
 where $\tau$ is a trivialization of the contact structure $\xi$ along $\alpha^{d_2}$ induced by a plane asymptotic to $\alpha^{d_2}$.
 This implies that $\alpha$ is hyperbolic and, therefore,
 \begin{eqnarray*}
 \ind(v_1)&=& 1+\CZ(\alpha^{d_1+d_2})-\CZ(\alpha^{d_1})-\CZ(\alpha^{d_2})\\
 &=& 1+(d_1+d_2)\cdot \CZ(\alpha)-d_1\cdot \CZ(\alpha)-d_2 \cdot\CZ(\alpha)\\
 &=&1,
 \end{eqnarray*}
 a contradiction. We conclude that if $\ind(v)=2$, then $\mathfrak{v}_1$ is not a trivial cylinder.

 \item  $\mathfrak{v}_1$ is a non-trivial cylinder. Lemma \ref{2.3} and (i) imply that
 $$2=\ind(v)\geq \ind(v_1)+n \geq 2n \geq 2 \,\,\Rightarrow \,\, n=1 \ \mbox{ and } \ind(v_1)=1. $$
 Hence $v_1$ is an index-$1$ cylinder, and the building $(v_2,\ldots,v_m)$ has index 1, a positive end, and no negative end. Therefore, (ii) implies that $m=2$ and $v_2$ is a plane. We conclude that $v$ is a building as in \ref{itm:Bc2}.
\end{itemize}
The proof of (iii) is finished.
\end{proof}

In the next theorem, we consider buildings with precisely one positive end and one negative end.

\begin{theorem} \label{C} 
Let $\lambda$ be a nondegenerate and weakly convex  contact form on $(M,\xi)$. Assume that every contractile index-$2$ Reeb orbit is embedded. Let $J\in \mathcal{J}_{\rm reg}(\lambda)$, and let $v=(v_1, \ldots, v_m)$ be a non-trivial genus zero $J$-holomorphic building with one positive end and one negative end. Then the following statements hold:
\begin{itemize}
\item[(i)] $\ind(v) \geq 1$.
\item[(ii)] If $\ind(v)=1$, then $v$ is a non-trivial cylinder.
\item[(iii)] If $\ind(v)=2$, then one of the following cases holds:
  \begin{enumerate}[label=(iii.\arabic*)]
  
   \item \label{itm:i} $v$ is a non-trivial cylinder.
   
   \item \label{itm:ii} $v=(v_1,v_2)$, where $v_1$ and $v_2$ are non-trivial cylinders of index $1$.
   
   \item \label{itm:iii} $v=(v_1,v_2)$, where 
    \begin{itemize}
   \item $v_1$  is a pair of pants with one positive end and two negative ends. It covers a trivial cylinder over a Reeb orbit $\alpha$. The index of $v_1$ is zero, its degree is $d_1 + d_2$, and the asymptotic limits at the negative ends are  $\alpha^{d_1}$ and $\alpha^{d_2}$;
   \item $v_2$ has two components:  a trivial cylinder over $\alpha^{d_1}$, and an index-$2$ holomorphic plane asymptotic to $\alpha^{d_2}$.
    \end{itemize}
    
   \item \label{itm:iv} $v=(v_1,v_2)$, where
    \begin{itemize}
     \item $v_1$  is a pair of pants with one positive end and two negative ends. It covers a trivial cylinder over a Reeb orbit $\alpha$. The index of $v_1$ is $1$, its degree is $d + 1$, and the asymptotic limits at the negative ends are  $\alpha^{d}$ and $\alpha$.
   \item $v_2$ has two components:  a trivial cylinder over $\alpha^{d}$, and an index-$1$ holomorphic plane asymptotic to $\alpha$.
    \end{itemize}

   \item \label{itm:v} $v=(v_1,v_2,v_3)$, where
    \begin{itemize}
   \item $v_1$  is a pair of pants with one positive end and two negative ends. It covers a trivial cylinder over a Reeb orbit $\alpha$. The index of $v_1$ is $0$, its degree is $d_1 + d_2$, and the asymptotic limits at the negative ends are  $\alpha^{d_1}$ and $\alpha^{d_2}$.
   \item $v_2$ has two connected components:  a trivial cylinder over $\alpha^{d_1}$, and a non-trivial index-$1$ holomorphic cylinder with a positive end at $\alpha^{d_2}$ and a negative end at a Reeb orbit $\widetilde \alpha$.
   \item $v_3$ has two connected components:  a trivial cylinder over $\alpha^{d_1}$, and an index-$1$ holomorphic plane with a positive end at  $\widetilde{\alpha}$.
    \end{itemize}
    
   \item \label{itm:vi} $v=(v_1,v_2, \ldots, v_{d_0})$ for some $2\leq d_0 \leq d+1$, where
   \begin{itemize}
   \item $v_1$ is an index-$(2-d)$ multiply covered curve with no branching points, one positive end asymptotic to a Reeb orbit $\alpha_1^d$, one negative end asymptotic to a Reeb orbit $\alpha_2^d$, and $d$ negative ends asymptotic to an index-$2$ Reeb orbit $\alpha_3$. This curve $d$-covers an index-$1$ somewhere injective pair of pants  $\mathfrak{v}_1$, with one positive end at  $\alpha_1$, and two negative ends at $\alpha_2$ and $\alpha_3$, respectively.
   
   \item $v_2$ has a trivial cylinder over $\alpha_2^d$, some index-$1$ holomorphic planes asymptotic to $\alpha_3$ and some trivial cylinders over $\alpha_3$.

   \item $v_j, i=3\ldots d_0-1,$ has some trivial cylinders over $\alpha_3$ and some index-$1$ holomorphic planes asymptotic to $\alpha_3$. The curve $v_{d_0}$ is formed by index-$1$ planes asymptotic to $\alpha_3$.
   
    \end{itemize}
    
  \end{enumerate}
\end{itemize}
\end{theorem}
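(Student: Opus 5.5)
The plan is to mirror the proof of Proposition~\ref{B}: induct on the height $m$ and sort by the somewhere injective curve $\mathfrak{v}_1$ underlying the top level. Since $v$ has one positive end, $v_1$ has exactly one non-trivial-cylinder-free component carrying the positive end. Let $n\ge1$ be its number of negative ends. One of these negative ends, say the one leading to the negative end of $v$, feeds into a subbuilding $B_0$ with one positive and one negative end; the other $n-1$ feed into subbuildings $B_1,\dots,B_{n-1}$ with one positive end and no negative end. By Proposition~\ref{B}(i) each $\ind(B_j)\ge1$. For $B_0$ we use the induction hypothesis: $\ind(B_0)\ge1$ if $B_0$ is non-trivial, and $\ind(B_0)=0$ if it is trivial. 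This gives the basic inequality
\[
\ind(v)\ \ge\ \ind(v_1)+(n-1)+\ind(B_0).
\]
We then treat three cases, as before.

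\textbf{Case 1: $\mathfrak{v}_1$ is a trivial cylinder.} Then $n\ge2$ and $\ind(v_1)\ge0$ by Lemma~\ref{1.7}. The indices of buildings below $\alpha^{d_i}$ can be computed by the explicit formula for branched covers of trivial cylinders. Since $v$ is non-trivial, either some $B_j$ with $j\ge1$ exists, giving $\ind\ge1$, or $\ind(v_1)\ge1$. This yields (i).

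\textbf{Case 2: $\mathfrak{v}_1$ is a non-trivial cylinder.} Lemma~\ref{2.3} gives $\ind(v_1)\ge n$, so $\ind(v)\ge 2n-1+\ind(B_0)\ge1$.

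\textbf{Case 3: $\mathfrak{v}_1$ is not a cylinder.} Here $n\ge2$, and Proposition~\ref{A} gives $\ind(v_1)\ge3-n$. Hence $\ind(v)\ge 2+\ind(B_0)\ge2$.

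In all cases $\ind(v)\ge1$, proving (i). For (ii), index $1$ can only occur in Case 2 with $n=1$ and $B_0$ trivial, so $v$ is a single cylinder, using the convention that each level has a non-trivial component. In Case 1 with $n=2$, the equality $\ind(v)=1$ would force $\ind(v_1)=0$ and $\ind(B_1)=1$ with $B_0$ trivial. That cover is then ruled out by the same hyperbolic/elliptic Conley--Zehnder computation as in Proposition~\ref{B}: an index-$1$ plane at $\alpha^{d_2}$ means $\CZ(\alpha^{d_2})=2$, so $\alpha$ is hyperbolic, and then $\ind(v_1)=1$, a contradiction.

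For (iii) we run through the equality cases with $\ind(v)=2$.

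\textbf{Case 2.} Either $n=1$, $\ind(v_1)\in\{1,2\}$ and $B_0$ has index $1$ or $0$, which by (ii) gives \ref{itm:i} or \ref{itm:ii}. Otherwise $n\ge2$ forces $\ind\ge3$.

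\textbf{Case 1.} Here $n=2$. If $\ind(v_1)=0$, then either $B_1$ has index $2$ and $B_0$ is trivial, where Proposition~\ref{B}(iii) gives \ref{itm:iii} or \ref{itm:v}; or $B_1$ has index $1$ and $B_0$ has index $1$, which we exclude by the CZ computation above. If $\ind(v_1)=1$, then $B_1$ is an index-$1$ plane at $\alpha^{d_2}$. Since $\CZ(\alpha^{d_2})=2$ and every contractible index-$2$ orbit is embedded, we get $d_2=1$, giving \ref{itm:iv}.

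\textbf{Case 3.} Equality $\ind(v)=2$ forces equality in Proposition~\ref{A}: $\ind(v_1)=2-d$, $n=d+1$, $B_0$ trivial, and all $d$ buildings $B_j$ have index $1$, hence are planes at index-$2$ orbits (Proposition~\ref{B}(ii)). We must show that these $d$ negative ends all sit on a single simple orbit $\alpha_3$ and that the remaining end is $\alpha_2^d$. The $d$-fold unbranched cover of a pair of pants has $d+1$ negative ends, with covering multiplicities summing to $d$ over each of the two ends of $\mathfrak{v}_1$. Because every end with a plane below it is an embedded (simple) index-$2$ orbit, each such end must have multiplicity one. Having $d$ ends of multiplicity one uses up all $d$ sheets over one end of $\mathfrak{v}_1$, which is $\alpha_3$. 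The remaining single end therefore covers $\alpha_2$ with multiplicity $d$, and the positive end covers $\alpha_1^d$. The planes may be stacked at different heights, separated by trivial cylinders over $\alpha_3$, which produces the level structure in \ref{itm:vi} with $2\le d_0\le d+1$.

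I expect the main obstacle to be this last bookkeeping in Case 3. One must rule out that some of the $d+1$ negative ends are over $\alpha_2$ but still capped, and keep track of how levels can be rearranged without violating the rule that each level has a non-trivial component. The other delicate point is carefully excluding the Case 1 configuration with an index-$1$ plane on both branches, via the hyperbolic CZ additivity.
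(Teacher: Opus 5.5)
Your proposal follows the paper's proof essentially step by step. You induct on the height, use the inequality $\ind(v)\ge \ind(v_1)+(n-1)+\ind(B_0)$, and split into three cases according to the underlying simple curve $\mathfrak v_1$. You then apply Proposition A in Case 3, Proposition B to the capped branches, and additivity of $\mu_{CZ}^\tau$ over iterates of a hyperbolic orbit to exclude the unwanted covers of trivial cylinders.

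There is, however, one omitted subcase. In Case 1 of (iii) you write ``Here $n=2$'', but your inequality together with $\ind(v_1)\ge 0$ only gives $2=\ind(v)\ge n-1$, i.e.\ $n\le 3$. The case $n=3$ produces a configuration that is not on the list (iii.1)--(iii.6). In it, $v_1$ covers the trivial cylinder over $\alpha$ with one positive and three negative ends and has $\ind(v_1)=0$. The branch $B_0$ is trivial, and $B_1,B_2$ are index-$1$ planes (possibly under trivial cylinders) at $\alpha^{d_2}$ and $\alpha^{d_3}$. This configuration must be excluded explicitly, and your argument as written does not do so.

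The fix is the computation you already use. Since $\mu_{CZ}^\tau(\alpha^{d_2})=2$ in the plane's trivialization, $\alpha^{d_2}$ is positive hyperbolic, so $\alpha$ is hyperbolic. Take a trivialization along $\alpha$ and induce it on all iterates, so that $c_\tau(v_1)=0$. Then the Conley--Zehnder terms cancel and $\ind(v_1)=-\chi(v_1)=2\neq 0$, a contradiction. The paper treats exactly this case.

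Apart from this omission, the argument is sound. Your multiplicity count in Case 3 is a cleaner version of the paper's bookkeeping: the $d$ capped ends have multiplicity one, so the single uncapped end carries the remaining multiplicity $d$ and the capped ends all lie over the other negative end of $\mathfrak v_1$. Also, your exclusions in (ii) and (iii) need only hyperbolicity of $\alpha$, not the embeddedness hypothesis. That hypothesis is really used only to get $d_2=1$ in (iii.4) and multiplicity one at the capped ends in (iii.6).
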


\begin{figure}[ht]
\centering
{\subfigure[item \ref{itm:i}\label{fig:p1}]{\includegraphics[scale=0.17]{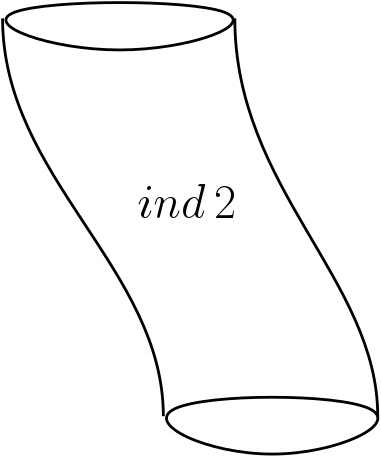}}}
\qquad 
{\subfigure[item \ref{itm:ii}\label{fig:p2}]{\includegraphics[scale=0.18]{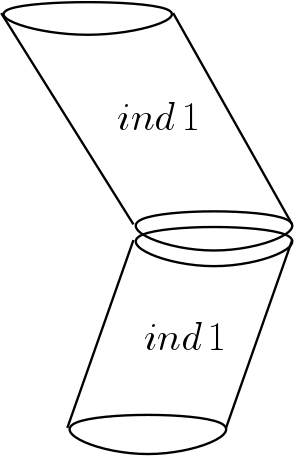}}}
\qquad 
{\subfigure[item \ref{itm:iii}\label{fig:pri}]{\includegraphics[scale=0.3]{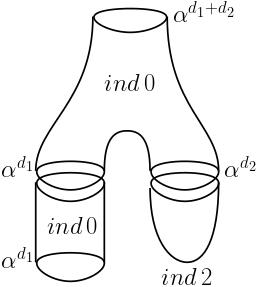}}}
\qquad 
{\subfigure[item \ref{itm:iv}\label{fig:seg}]{\includegraphics[scale=0.2]{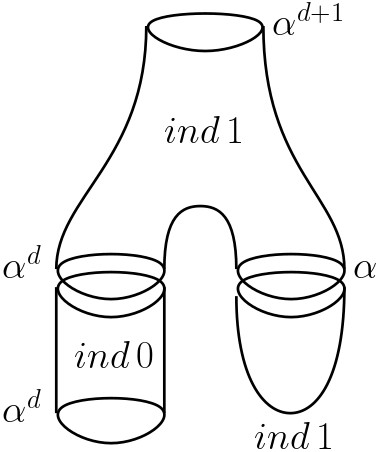}}}\\
{\subfigure[item \ref{itm:v}\label{fig:ter}]{\includegraphics[scale=0.27]{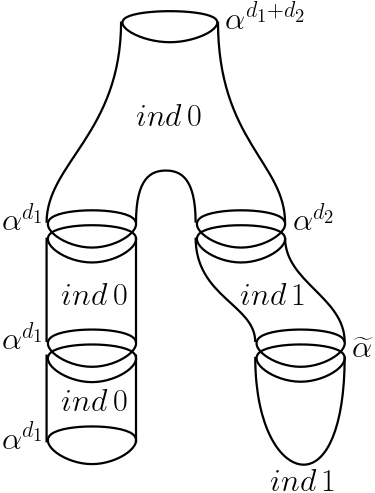}}}
\qquad
{\subfigure[item \ref{itm:vi}\label{fig:qua}]{\includegraphics[scale=0.32]{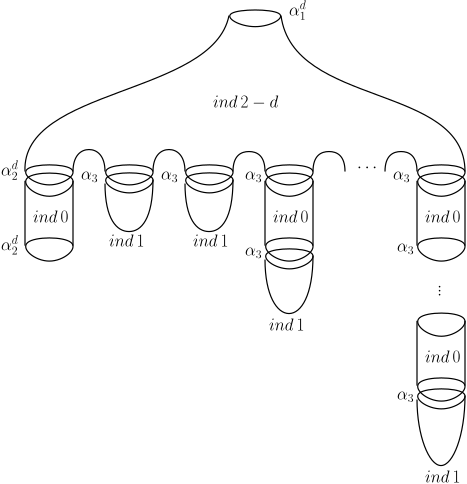}}}
\caption{$J$-holomorphic buildings in Theorem \ref{C}-(iii)}\label{fig:conj}
\end{figure}

\begin{proof}
 The proof of (i) is by induction on $m$. If $m=1$, then $v$ is a non-trivial cylinder. Lemma \ref{2.5} implies $\ind(v) \geq 1$. So assume that $m>1$ and that (i) is proved for buildings of height up to $m-1$. Let $n>0$ be the number of negative ends of $v_1$. The building $(v_2, \ldots, v_m)$ is the union of $n$ genus zero holomorphic buildings $A_1, \ldots, A_n,$ each one of them with a positive end corresponding to a negative end of $v_1$. Without loss of generality we may assume that $A_1$ has a negative end and that $A_2, \ldots, A_n$ have no negative ends.

If $n=1$, then $A_1$ is a non-trivial building and by the induction hypothesis we have $\ind(A_1)\geq 1$. The same holds for $v_1$ and, therefore,
 $\ind(v)=\ind(v_1)+\ind(A_1)\geq 2.$ 

Now suppose $n>1$. If $A_1$ is a trivial building (formed by trivial cylinders), then $\ind(A_1)=0$. If $A_1$ is a non-trivial building, then by the induction hypothesis $\ind(A_1)\geq 1$. We conclude that $\ind(A_1)\geq 0$. Theorem \ref{B} gives
\begin{equation}
\ind(v)=\ind(v_1)+\sum_{i=1}^n \ind(A_i) \geq \ind(v_1)+n-1
\label{eq:n2}
\end{equation}
Let $\mathfrak{v}_1$ be the somewhere injective curve that is covered by $v_1$. Consider the following cases:
\begin{itemize}
\item {\bf Case 1.} $\mathfrak{v}_1$ is a trivial cylinder. 

By Lemma \ref{1.7} and \eqref{eq:n2} we obtain
$\ind(v)\geq n-1 \geq 1.$

\item {\bf Case 2.} $\mathfrak{v}_1$ is a non-trivial cylinder. 

Lemma \ref{2.3} and  \eqref{eq:n2} imply
$\ind(v)\geq n+n-1=2n-1\geq 3.$

\item {\bf Case 3.} $\mathfrak{v}_1$ is not a cylinder.

By Proposition \ref{A} and  \eqref{eq:n2}, we obtain
$\ind(v)\geq 3-n+n-1=2.$
\end{itemize}
Therefore, in all three cases we have $\ind(v)\geq 1$ and (i) is proved.

To prove (ii), suppose $\ind(v)=1$. Observing  the proof of (i), we conclude that $v$ is either a non-trivial cylinder, or  $v=(v_1,\ldots,v_m)$ is a building with $m>1$, $n>1$, and $v_1$ covers a trivial cylinder over a Reeb orbit $\alpha$.

Let us show the second case above does not occur. Using the same notation as in (i), we see from  \eqref{eq:n2} and  Lemma \ref{1.7} that
$$1=\ind(v)= \ind(v_1) + \sum_{i=1}^n \ind(A_i) \geq \ind(v_1)+n-1 \geq n-1 \geq 1 \, \, \Rightarrow \,\, n=2.$$
On the other hand, since $\ind(v_1),\ind(A_1)\geq 0$ and $\ind(A_2)\geq 1$, we have
\begin{eqnarray}
1=\ind(v) =\ind(v_1)+\ind(A_1)+\ind(A_2) \geq 1, 
\end{eqnarray}
which implies 
$\ind(v_1)=\ind(A_1)=0$ and $\ind(A_2)=1$. 
In particular, $A_1$ is a trivial building and $A_2$ has only one level,  formed by a plane.

We conclude that  $v=(v_1,v_2)$, where $v_1$ covers a trivial cylinder over $\alpha$, it has a positive end at $\alpha^{d_1+d_2}$ and two negative ends at $\alpha^{d_1}$ and $\alpha^{d_2 }$. Moreover, $v_2$ is the union of the trivial cylinder over $\alpha^{d_1}$ and an index-$1$ plane  with a positive end at $\alpha^{d_2}$. Since $\ind(A_2) = 1$, we obtain $\mu_{CZ}^\tau(\alpha^{d_2})=2,$ where $\tau$ is a trivialization of the contact structure $\xi$ along $\alpha^d_2$ induced by the plane. Our assumptions on the index-$2$ Reeb orbits imply that $d_2=1$ and $\alpha$ is a simple, contractible, hyperbolic Reeb orbit. The trivialization $\tau$ induces trivializations along the asymptotic limits $\alpha^{d_1+1},\alpha^{d_1}$, also denoted by $\tau$, and a trivialization of $v_1^*\xi$. We compute

\begin{eqnarray*}
\ind(v_1) &=& 1+\CZ(\alpha^{d_1+1})-\CZ(\alpha^{d_1})-\CZ(\alpha)\\
&=& 1 + (d_1+1)\mu^{\tau}_{CZ}(\alpha)-d_1 \cdot \mu^{\tau}_{CZ}(\alpha)-  \mu^{\tau}_{CZ}(\alpha)\\
&=& 1,
\end{eqnarray*}
which is a contradiction. Therefore, if $\ind(v)=1$, then $v$ is a non-trivial cylinder. This completes the proof of (ii).

To prove (iii), suppose that $\ind(v)=2$. From the proof of (i) one of the following cases holds:

\begin{enumerate}

\item $m=1$ and $v$ is a non-trivial cylinder of index 2 as in \ref{itm:i}; or

\item $m>1, n=1$, i.e. $v_1$ and $A_1$ are non-trivial; or

\item $m>1, n>1$ and $v_1$ covers a trivial cylinder; or

\item $m>1, n>1$ and $v_1$ covers a curve that is not a cylinder.
\end{enumerate}

Let's take a closer look at the last three cases.
\begin{itemize}
\item If Case 2 holds, then (i) implies that $\ind(v_1)\geq 1$ and $\ind(A_1)\geq 1$. Then,
$$2=\ind(v)=\ind(v_1)+\ind(A_1) \,\, \Rightarrow \,\, \ind(v_1)=\ind(A_1)=1.$$
It follows from (ii) that $v_1$ and $A_1$ are non-trivial cylinders, as in \ref{itm:ii}.

\item If Case 3 holds, then we  consider two sub-cases: 
\begin{itemize}
 \item $A_1$ is trivial. In this case,  $\ind(A_1)=0$. By Theorem \ref{B} and Lemma \ref{1.7} we obtain
 $$2=\ind(v)=\ind(v_1)+\sum_{i=2}^n \ind(A_i) \geq n-1.$$
 We conclude that
 $2 \leq n \leq 3 $, i.e. $n=2$ or  $n=3$.
 If $n=2$, then
 \begin{equation}
 2=\ind(v)=\ind(v_1)+\ind(A_2),
 \label{eq:n3}
 \end{equation}
 where $\ind(v_1)\geq 0$ and $\ind(A_2) \geq 1.$
We have the following possibilities:
 \begin{itemize}
 \item[(a)] $\ind(v_1)=0$ and $\ind(A_2)=2$; or
 \item[(b)] $\ind(v_1)=1$ and $\ind(A_2)=1$.
 \end{itemize}
 
 If (a) holds, then by Theorem \ref{B}, either $A_2$ is a plane and $v$ is as in \ref{itm:iii}, or $A_2$ is a building with two levels and $v$ is as in  \ref{itm:v}.
 
 If (b) holds, then  Theorem \ref{B} implies that $A_2$ is a plane and $v$ is as in \ref{itm:iv}.
 
 If $n=3$, then
 $2=\ind(v_1)+\ind(A_2)+\ind(A_3),$
 where $\ind(v_1)\geq 0$ and $\ind(A_2),\ind(A_3)\geq 1$. It follows that
 $\ind(v_1)=0$  and $\ind(A_2)=\ind(A_3)=1.$
 From Theorem \ref{B}-(ii
88) we conclude that $A_2$ and $A_3$ are planes. Since $v_1$ has three negative ends, say in $\alpha^{d_1},\alpha^{d_2}$ and $\alpha^{d_3}$, we can assume without loss of generality that $A_2$ and $ A_3$ are asymptotic to $\alpha^{d_2}$ and $\alpha^{d_3}$ at their positive ends, respectively. Our hypotheses on the index-$2$ orbits imply that $\alpha$ is simple and $d_2=d_3=1$.  In particular, $\alpha$ is hyperbolic and $\CZ(\alpha)=2$. Taking a trivialization $\tau$ of $\xi$ along $\alpha$ we obtain an induced trivialization of $v_1^*\xi$. This trivialization, also denoted by $\tau$, induces a trivialization of $\xi$ along each of its asymptotic limits, which are iterated from $\alpha$.  So,
 \begin{eqnarray*}
 \ind(v_1) &=& -(2-4)+\CZ(\alpha^{d_1+d_2+d_3})-\CZ(\alpha^{d_1})-\CZ(\alpha^{d_2} )-\CZ(\alpha^{d_3})\\
 &=& 2 + (d_1+2)\mu^{\tau}_{CZ}(\alpha)-d_1 \mu^{\tau}_{CZ}(\alpha)- \mu^{\tau}_{CZ}(\alpha)-  \mu^{\tau}_{CZ}(\alpha)\\
 &=& 2,
 \end{eqnarray*}
  contradicting  $\ind(v_1)=0$. We conclude that $v_1$ cannot have three negative ends and the case $n=3$ is ruled out.

 \item $A_1$ is non-trivial. In this case, item (i)  implies that $\ind(A_1)\geq 1$. By Theorem \ref{B} and  Lemma \ref{1.7} we obtain
 $$2=\ind(v)=\ind(v_1)+\ind(A_1)+\sum_{i=2}^n \ind(A_i) \geq 1+n-1=n \geq 2,$$
 which implies  $n=2$. On the other hand, since $\ind(v_1)\geq 0$ and $\ind(A_1),\ind(A_2)\geq 1$, we obtain
 $$2=\ind(v_1)+\ind(A_1)+\ind(A_2) \,\, \Rightarrow \,\, \ind(v_1)=0 \ \ \mbox{ and } \ \ \ind(A_1)=\ind (A_2)=1.$$
 In particular, $A_2$ is an index-$1$ plane asymptotic to a simple index-$2$ hyperbolic Reeb orbit. As in the proof of  (ii), since $v_1$ covers a trivial cylinder and $n=2$, we conclude that $\ind(v_1) =1$, a contradiction. Therefore, this case is ruled out.
 \end{itemize}

 \item If Case 4 holds, then Theorem \ref{B} and Proposition \ref{A} imply
$$2=\ind(v)=\ind(v_1)+\ind(A_1)+\sum_{i=2}^n \ind(A_i) \geq 3-n+\ind(A_1)+n-1= \ind(A_1)+2,$$
and thus  $\ind(A_1)=0$. Item (i) implies that $A_1$ is a trivial building, i.e., $A_1$ is formed by trivial cylinders. It also follows from the inequality above that $\ind(v_1)=3-n$. 
By Proposition \ref{A},  $v_1$ has index $2-d$, it covers (with multiplicity $d=n-1$ and no branching points) an index-$1$ somewhere injective curve $\mathfrak{v}_1$, with one positive end at $\alpha_1$ and two negative ends, one at $\alpha_2$ and the other at $\alpha_3$. Moreover, $v_1$ has $n=d+1$ negative ends.

We compute
\begin{equation}
2=\ind(v)=\ind(v_1)+\sum_{i=2}^{d+1}\ind(A_i)=2-d+\sum_{i=2}^{d+1}\ind(A_i) \,\, \Rightarrow \,\, \sum_{i=2}^{d+1}\ind(A_i)=d.
\label{eq:n4}
\end{equation}

Theorem \ref{B} gives $\ind(A_i)\geq 1 \forall i=2,\ldots,d+1,$ and hence \eqref{eq:n4} implies  $\ind(A_i)=1 \forall i=2,\ldots, d+1$. In particular, $A_1$ is a trivial cylinder, and $A_i,i=2,\ldots, d+1$ consists of a pile of trivial cylinders over the same orbit and, at the lowest level, an index-$1$ plane asymptotic to the same orbit. 

We can assume, without loss of generality, that $A_1$ is a trivial cylinder over $\alpha_2^p$, $A_i,i=2,\ldots,d+1,$ are planes, and $A_{d+1}$ is asymptotic to  $\alpha_3^q$ for some $1\leq p,q\leq d$.

Denote by $\beta_i$ the asymptotic limit of the positive end of the planes $A_i$, $i=2,\ldots, d+1$. It follows that such orbits are contractile. Fix a trivialization $\tau_i$ of $\xi$ over $\beta_i$ that extends to a trivialization of $\xi$ over $A_i$. Since $\ind(A_i)=1$, it follows that $\mu_{CZ}^{\tau_i}(\beta_i)=2$. Now we use the hypothesis that every contractile Reeb orbit with index 2 is simple
to conclude that the periodic orbits $\beta_i,i=2,\ldots,d+1,$ are contractile and simple. In particular, since the plane $A_{d+1}$ is asymptotic to $\alpha_3^q$, we conclude that $\beta_{d+1}=\alpha_3$ and $q=1$.

Since $v_1$ is a $d$-cover of $\mathfrak{v}_1$, and each $\beta_2,\ldots,\beta_{d+1}$ are simple, $v_1$ has at least $d$ negative ends asymptotic to $\alpha_3$. This forces $\beta_i=\alpha_3, \,\, \forall i=2,\ldots,d+1,$ 
and $p=d$.
We conclude that the asymptotic limit of $A_i,i=2,\ldots,d+1,$ is a common index-$2$ simple orbit $\alpha_3$, and $A_1$ is a trivial cylinder over $\alpha_2^d$.

In short, $v$ is a building such that:
\begin{itemize}
     
     \item $v_1$ is a curve with one positive end at $\alpha_1^d$, one negative end at  $\alpha_2^d,$ and $d$ negative ends at a simple contractible index-$2$ orbit $\alpha_3$.
     
     \item $v_2$ is the union of a trivial cylinder over $\alpha_2^d$, some trivial cylinders over $\alpha_3$ and some  planes asymptotic to $\alpha_3$.
     
     \item There exists $2\leq d_0\leq d+1$ such that $v_j,j=3,\ldots,d_0,$ is formed by cylinders over $\alpha_3$ and planes asymptotic to $\alpha_3$. The lowest level $v_{d_0}$ contains only planes asymptotic to $\alpha_3$.  
\end{itemize}
This building is as in \ref{itm:vi}.
\end{itemize}
The proof is finished. \end{proof}

\subsection{Ruling out buildings as in Theorem \ref{C}-(iii.4) and (iii.5)}\label{sec_ruling_out_buildings}

In this section, we  rule out certain index-$2$ buildings that might appear at the end of an index-$2$ family of holomorphic cylinders. The argument follows the same lines in the proof of the following theorem by Hutchings and Nelson.

\begin{prop}[Hutchings-Nelson \cite{HN}, Proposition 3.1] \label{3.1} 
Let $\lambda$ be a nondegenerate contact form on a closed $3$-manifold $M$, and let $J \in \mathcal{J}(\lambda)$. Let $u=(u_1,u_2)$ be a $J$-holomorphic building such that:
\begin{itemize}
    \item[(i)] $u_1$ is an index-$0$ curve that multiply covers a trivial cylinder over a Reeb orbit $\alpha$. The covering number is $d+1$, and $u_1$  has a positive end at $\alpha^{d+1}$, and two negative ends at $\alpha^{d}$ and $\alpha$.
    
    \item[(ii)] $u_2$ has two components, one of them is a trivial cylinder over $ \alpha^{d}$, and the other is an index-$2$ plane with a positive end at $\alpha$.
\end{itemize}
Then $u$ is not in the closure of the moduli space $\M_{0,2}^J(\alpha^{d+1},\alpha^d)/\R$.
\end{prop}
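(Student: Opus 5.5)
We argue by contradiction, as in \cite{HN}: suppose $u_k \in \M^J_{0,2}(\alpha^{d+1},\alpha^d)/\R$ is a sequence of index-$2$ cylinders converging in the SFT sense to the building $u=(u_1,u_2)$. The strategy is to extract from the asymptotic behavior of the $u_k$ near the orbit $\alpha$ a winding-number inequality that is incompatible with the index-$2$ plane in the lower level. We fix the trivialization $\tau$ of $\xi$ along $\alpha$ induced by the plane, and use the induced trivializations along all iterates $\alpha^m$. Since $u_1$ is a branched cover of the trivial cylinder over $\alpha$, the ends of $u$ and of the $u_k$ all lie in a small tubular neighborhood of $\alpha$.

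\textbf{Step 1: the index constraint on $u_1$.} We compute
\[
0=\ind(u_1)=1+\CZ(\alpha^{d+1})-\CZ(\alpha^d)-\CZ(\alpha).
\]
The index-$2$ plane $P$ in $u_2$ satisfies $\CZ(\alpha)=3$, so $\alpha$ is elliptic or negative hyperbolic, with rotation number $\theta=\theta^\tau(\alpha)\in(1,2)$. The relation above then forces $\lfloor (d+1)\theta\rfloor=\lfloor d\theta\rfloor+\lfloor\theta\rfloor$. In the elliptic case this is a constraint on the fractional part of $\theta$.

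\textbf{Step 2: winding of the plane.} The plane $P$ has index $2$ and is asymptotic to $\alpha$ with $\CZ(\alpha)=3$. Its asymptotic winding satisfies $\wind_\infty(P)\le \lfloor \CZ(\alpha)/2\rfloor=1$ (that is, $\wind_{<0}$ at a positive end), and positivity of $\windd_\pi$ gives $\wind_\infty(P)\ge 1$. Hence $\wind_\infty(P)=1$, and the leading eigenvalue of $P$ is the largest negative one.

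\textbf{Step 3: winding of $u_k$ near $\alpha$.} For $k$ large, the part of $u_k$ converging to $u_1$ is a cylinder-with-holes in the neighborhood of $\alpha$. We project it to the normal disk bundle and regard it as a section over a $(d+1)$-fold region, following the Hutchings–Nelson argument. At the positive end, $\wind_\infty(u_k;+)\le \lfloor (d+1)\theta\rfloor$. At the negative end, $\wind_\infty(u_k;-)\ge\lceil d\theta\rceil$. On the loop near the "neck" attached to $P$, the winding approximates $\wind_\infty(P)=1$ by the convergence. Counting zeros of the normal projection, which are nonnegative by positivity of intersections with the trivial cylinder $\R\times\alpha$, yields
\[
\lfloor (d+1)\theta\rfloor\ \ge\ \lceil d\theta\rceil+1 .
\]

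\textbf{Step 4: contradiction.} Combining this with Step 1 gives $\lfloor d\theta\rfloor+1\ge\lceil d\theta\rceil+1$, i.e. $d\theta\in\Z$. This contradicts nondegeneracy in the elliptic case. In the negative hyperbolic case, $\CZ(\alpha^d)=3d$ must be odd, and applying the same count with the exact index relation $\CZ(\alpha^{d+1})=3(d+1)$ gives $\ind(u_1)=1$, contradicting $\ind(u_1)=0$.

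The main obstacle is Step 3. We need to make rigorous that the windings of $u_k$ around the neck loop and the two ends can be compared through a zero count of the projection to $\xi$ near $\alpha$. This requires uniform asymptotic control from Theorem \ref{reprass} along the degenerating neck, and positivity of intersection of $u_k$ with $\R\times\alpha$. I would follow the corresponding argument in \cite[Proposition 3.1]{HN} essentially verbatim.
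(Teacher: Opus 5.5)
Your argument is correct, but it is not the route of \cite[Proposition 3.1]{HN} that the paper cites and reproduces for Case~1 in Proposition~\ref{naopode1}.

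That argument counts \emph{writhe}. It uses three ingredients:
\begin{itemize}
\item Lemma~\ref{L3.5} on the pair of pants between $\zeta_+$ and $\zeta_1\cup\zeta_2$ (contributing $\chi=-1$) and on the cylinder between $\zeta_1$ and $\zeta_-$;
\item the linking of the unknot $\zeta_2$ with the $d$-strand braid $\zeta_1$, which contributes $2d\,\wind(\zeta_2)=2d$;
\item Lemmas~\ref{L3.2} and \ref{L3.4} to bound $w_\tau(\zeta_\pm)$.
\end{itemize}
With $\lfloor (d+1)\theta\rfloor=\lceil d\theta\rceil$ this gives $\lceil d\theta\rceil\ge 2d+1$, contradicting $\theta<2$.

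You instead count zeros of the normal coordinate, i.e.\ intersections of $u_k$ with $\R\times\alpha$. You do this on the region bounded by $\zeta_+$, $\zeta_2$, $\zeta_-$; the curve $\zeta_1$ is not even needed. This gives $\wind(\zeta_+)\ge\wind(\zeta_-)+\wind(\zeta_2)$, hence your inequality, which contradicts the index-$0$ relation directly.

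Your route is more elementary: it uses only the HWZ asymptotics and positivity of intersections with the trivial cylinder, and nothing beyond $\ind(u_1)=0$. What the writhe route buys is the Euler characteristic term. In the hyperbolic setting of Case~1, your count reads $d+1\ge\wind(\zeta_+)\ge\wind(\zeta_-)+1\ge d+1$ and gives nothing, whereas the $-1$ coming from $\chi$ in Lemma~\ref{L3.5} produces the contradiction. For the same reason, you should not describe Step~3 as following \cite[Proposition 3.1]{HN} verbatim: doing so yields the writhe inequality, not yours.

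Two minor points.
\begin{itemize}
\item The negative hyperbolic case is excluded by the index formula alone, $\ind(u_1)=1+3(d+1)-3d-3=1$. So it belongs in Step~1, where your floor identity is only valid in the elliptic case, and the parity remark about $3d$ is irrelevant.
\item No uniform asymptotic control along the neck is needed. You only need three things: the asymptotics of each fixed $u_k$ at heights $R_\pm$ depending on $k$; the $C^0$-closeness of $\zeta_2^k$ to the slice of the plane at level $0$, which lies at distance at least $\varepsilon$ from $\alpha$; and the fact, from SFT convergence as in the setup of Proposition~\ref{naopode1}, that the region you count on stays in $\R\times N$, so the normal coordinate is defined there.
\end{itemize}
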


We shall rule out buildings of the following types:
\begin{itemize}
 \item Case 1. $u=(u_1,u_2)$ is a $J$-holomorphic building where:
\begin{itemize}
\item $u_1$ is an index-$1$ curve that multiply covers a trivial cylinder over $\alpha$. The covering number is $d+1$, and $u_1$ has  a positive end at $\alpha^{d+1}$, and two negative ends at $\alpha^d$ and $\alpha$.
\item $u_2$ has two components, one of them is a trivial cylinder over $\alpha^d$, and the other is an index-$1$ plane  with positive end at $\alpha$.
\end{itemize}
\item Case 2. $u=(u_1,u_2, u_3)$ is a $J$-holomorphic building where:
 \begin{itemize}
   \item $u_1$ is an index-$0$ curve that multiply covers a trivial cylinder over $\alpha$. The covering number is $d+1$, and $u_1$ has a positive end at $\alpha^{d+1}$, and two negative ends at $\alpha^d$ and $\alpha$. 
   \item $u_2$ has two  components, one of them is a trivial cylinder over $\alpha^{d}$, and the other is an index-$1$ cylinder with a positive end at $\alpha$ and a negative end at $\tilde{\alpha}$.
   \item $u_3$ has two  components: a trivial cylinder over $\alpha^{d}$, and an index-$1$ plane  with a positive end at $\tilde{\alpha}$.
    \end{itemize}
\end{itemize}

\begin{figure}[ht]
\centering
{\subfigure[Case 1]{\includegraphics[scale=0.2]{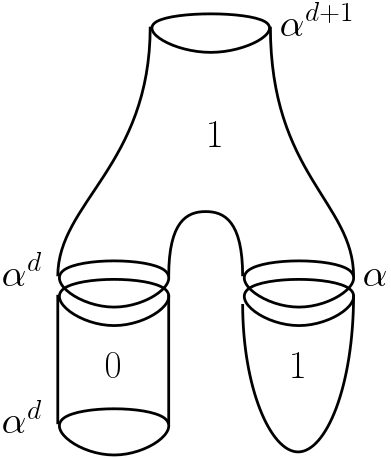}}}
\hspace{2cm}
{\subfigure[Case 2]{\includegraphics[scale=0.22]{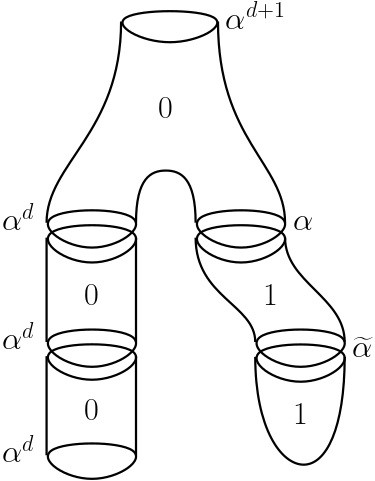}}}
\caption{Holomorphic buildings of Cases 1 and 2.}
\end{figure}

Throughout this subsection, we assume that the Reeb orbits $\alpha$ and $\tilde \alpha$, as in Cases 1 and 2 above, are embedded. We introduce some notations before studying sequences of holomorphic cylinders converging to buildings like those in Cases 1 and 2 above.

\begin{definition}[Hutchings {\cite[Definition 2.7]{H2009}}]
A braid around an embedded Reeb orbit $\alpha$ is an oriented link $\zeta$ contained in a tubular neighborhood $N$ of $\alpha$ such that the projection $\zeta \to \alpha$ is a submersion that preserves the orientation.
\end{definition}

\begin{definition}
Let $\gamma=\pi \circ \tilde \gamma:S^1 \to \R^2$ be the projection of an oriented link $\tilde \gamma:S^1 \to \R^3$, where $ \pi:\R^3 \to \R^2$ is a projection. We assume that $\gamma$ contains only generic double self-intersections. Each self-intersection of $\gamma$ is assigned a value $+1$ or $-1$ according to its type. We use the convention that counterclockwise twists are positive. The {\bf writhe} of the oriented link $\tilde \gamma$ is defined as the sum of all such values and is denoted $w(\gamma)$. 
\end{definition}


\begin{definition}[Hutchings {\cite[Definition 2.8]{H2009}}] 
Let $\alpha$ be an embedded Reeb orbit, let $\tau$ be a trivialization of $\alpha^*\xi$, and let $\zeta$ be a braid around $\alpha$. Extend the trivialization $\tau$ to obtain a diffeomorphism $\phi_{\tau}: V \to S^1 \times D$ between a tubular neighborhood $V$ of $\alpha$ and $S^1 \times D$ so that the projection $\zeta \to S^1$ is a submersion. We also identify $S^1 \times D$ with a solid torus of $\R^3$ through the orientation-preserving diffeomorphism $f: S^1 \times D \to \R^3$, given by
$\left(\theta, x,y\right) \mapsto \left(1+x/2\right)(\cos \theta, \sin \theta,0)-\left(0,0,y/2\right).$
Let $p:\R^3 \to \R^2\times \{0\}$ be the canonical projection onto the plane $\theta x$. We define the writhe of $\zeta$ and denote it by $w_{\tau}(\zeta) \in \Z$ as the writhe of the oriented link $p \circ f \circ \phi_{\tau}(\zeta) $ in $\R^2$, i.e.,
$w_{\tau}(\zeta):=w(p \circ f \circ \phi_{\tau}(\zeta)).$
\end{definition}

\begin{definition}
Let $u=(a,v):\Sigma \setminus \Gamma \to \R \times M$ be a $J$-holomorphic curve, where $J \in \mathcal{J}(\lambda)$. Suppose that $u$ has a positive end at an embedded Reeb orbit $\alpha$. In a tubular neighborhood of $\alpha$ in $M$, adapted to a trivialization $\tau$ of $\alpha^*\xi$, we write  $v(s,t)=(\theta(s,t),z(s,t))$, where $\theta(s,t) \in S^1$ and $z(s,t) \in \R^2$. Let $t\mapsto \zeta_{R_+}(t):=(a(R_{+},t),\theta(R_{+},t),z(R_{+},t)), \ \ t\in S^1,$ be the braid obtained by intersecting $u\left([R,\infty) \times S^1\right)$ with $\{R_{+}\}\times M $, for $R_{+}>0$ large enough. We define
\begin{equation}
\wind(\zeta):={\text wind}(t \mapsto z(R_{+},t)).
\label{eq:defwind}
\end{equation}
Note that $\wind(\zeta)$ is independent of $R_{+}$, if $R_{+}$ is large enough.

Similarly, if $u$ has a negative end at an embedded Reeb orbit  $\beta$ and $\zeta$ is the braid obtained by intersecting $u\left((-\infty,0] \times S^1\right)$ with $\{ R_{-}\}\times M$, for sufficiently large $-R_{-}$, we define
$\wind(\zeta):={\rm wind}(t \mapsto z(R_{-},t)).$
\end{definition}

\begin{lemma}[Hofer-Wysocki-Zehnder \cite{prop2}] \label{wind pi} 
Let $u:\C \to \R \times M$ be a $J$-holomorphic plane asymptotic to a nondegenerate Reeb orbit $\alpha$. Consider a trivialization $\tau$ of $\alpha^*\xi$ that extends to a trivialization of $u^*\xi$. Let $\zeta$ be the intersection of $u(\C)$ with $\{R_{+}\}\times M$. If $R_{+} \gg 0$, then
$\wind(\zeta) \geq 1.$
\end{lemma}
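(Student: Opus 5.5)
The plan is to read off the winding of the braid $\zeta$ from the asymptotic formula for $u$ near its positive puncture, and then to bound that winding from below using the fact that a plane has no negative ends.

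\textbf{Step 1: the braid winds like the leading eigensection.} By Theorem \ref{reprass}, in coordinates adapted to $\tau$ near the positive puncture (which corresponds to $s\to+\infty$, identified with $z\to\infty$ in $\C$) we can write
$$z(s,t)=e^{\mu s}\bigl(\varphi(t)+h(s,t)\bigr),$$
where $\mu<0$ is an eigenvalue of $A_\alpha$, $\varphi$ is a $\mu$-eigensection, and $h\to0$ uniformly. Since $\varphi$ never vanishes, for $s$ large the loop $t\mapsto z(s,t)$ is homotopic in $\C\setminus\{0\}$ to $\varphi$. Hence, for $R_+\gg0$,
$$\wind(\zeta)=\wind(\mu)=\wind_\infty(u;\infty).$$
Here I use that the slice $\{R_+\}\times M$ meets the end in a curve $\{s=s(R_+,t)\}$ that is a graph over $t$, because $a(s,t)-Ts-c_1\to0$ in $C^\infty$.

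\textbf{Step 2: a plane has $\wind_\infty\ge1$.} I would use the Hofer--Wysocki--Zehnder argument. Consider $\pi\circ du$, where $\pi:TM\to\xi$ is the projection along $R$. This is a section of the bundle $\mathrm{Hom}_{\C}(T\C,u^*\xi)$ satisfying a Cauchy--Riemann type equation, so it has isolated zeros of positive local degree. It does not vanish identically: otherwise $u$ would be a trivial cylinder or constant, which is impossible for a finite energy plane.

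Let $\mathrm{wind}_\pi(u)$ denote the algebraic count of zeros of this section. Since $\tau$ extends over $u^*\xi$, so that $c_\tau(u)=0$, HWZ's formula gives
$$0\le \mathrm{wind}_\pi(u)=\wind_\infty(u)-\chi(\C)=\wind_\infty(u)-1.$$
The identity comes from comparing the winding of $\pi\circ du(\partial_s)$, taken relative to $\tau$, along a large circle with the asymptotic eigensection $\varphi$. It follows that $\wind_\infty(u;\infty)\ge1$, and combining this with Step 1 gives the lemma.

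\textbf{Main obstacle.} The delicate point is the identification in Step 2 of the winding of $\pi\circ du(\partial_s)$ along the large circle with $\wind(\mu)$. This requires differentiating the asymptotic formula in $s$: $\partial_s z=e^{\mu s}(\mu\varphi+\partial_s h)$, and since $\mu\neq0$ and $\partial_s h\to0$, this has the same winding as $\varphi$. One must also account for the $T_0$-periodic tubular coordinates versus the $\xi$-frame, whose difference is an error that decays exponentially and so does not change the winding.
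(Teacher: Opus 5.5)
Your proposal is correct and is the same Hofer--Wysocki--Zehnder argument the paper relies on. The paper cites the lemma without proof, but in the proof of Lemma~\ref{dim0} it uses exactly your chain of reasoning: identify $\mathrm{wind}(\zeta)$ with the winding of the leading eigensection via Theorem~\ref{reprass}, then apply $0\le \mathrm{wind}_\pi(\bar u)=\mathrm{wind}_\infty(\bar u;\infty)-1$ as in \eqref{windpi}. One harmless slip is that $\partial_s z=e^{\mu s}(\mu\varphi+\mu h+\partial_s h)$; you dropped the $\mu h$ term, but it also tends to zero and does not change the winding.
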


\begin{lemma}[Hutchings-Nelson {\cite[Lemma 3.2]{HN}}] \label{L3.2} 
Let $u:\Sigma \setminus \Gamma \to \R \times M$ be a $J$-holomorphic curve, where $J \in \mathcal{J}(\lambda)$. Assume that $u$ is not part of a trivial cylinder or a multiply-covered component. Let $\alpha$ be an embedded Reeb orbit and assume that $u$ has a positive end at $\alpha^d$, for some $d\geq 1$. Let $\zeta$ be the intersection of this positive end of $u$ with $\{R_{+}\}\times M$. If $R_{+} \gg 0$, then
$$w_{\tau}(\zeta) \leq (d-1)\wind(\zeta) \leq (d-1)\lfloor\CZ(\alpha^d)/2\rfloor.$$ 
\end{lemma}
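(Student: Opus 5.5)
The plan is to prove the two inequalities separately. Both will be read off from the asymptotic description of the positive end given in Theorem \ref{reprass}.

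Fix a trivialization $\tau$ of $\alpha^*\xi$ and a tubular neighborhood of the embedded orbit $\alpha$ adapted to $\tau$. In cylindrical coordinates $(s,t)\in[s_0,\infty)\times\R/\Z$ near the puncture, write the end as $(a,\theta,z)$ with $\theta(s,t)\approx dt$. By Theorem \ref{reprass},
$$
z(s,t)=e^{\mu s}\bigl(\varphi(t)+h(s,t)\bigr),
$$
where $\mu<0$ is an eigenvalue of $A_{\alpha^d}$ with eigenvector $\varphi$. For $R_+$ large, the braid $\zeta$ is the closed curve $t\mapsto(\theta(R_+,t),z(R_+,t))$. Its winding is $\wind(\zeta)=\wind(\mu)$, computed in $\tau^d$.

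\textbf{Second inequality.} Since $\mu<0$, Theorem \ref{windhwz}(ii) gives $\wind(\mu)\le \wind_{<0}(A_{\alpha^d})$. Recall $\CZ(\alpha^d)=\wind_{<0}+\wind_{\ge0}$. Theorem \ref{windhwz} also gives $\wind_{\ge0}\in\{\wind_{<0},\wind_{<0}+1\}$. Hence $\wind_{<0}=\lfloor\CZ(\alpha^d)/2\rfloor$, which yields $(d-1)\wind(\zeta)\le(d-1)\lfloor\CZ(\alpha^d)/2\rfloor$.

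\textbf{First inequality: writhe as a sum of windings.} Reparametrize so that $\theta=dt$ exactly, which is possible after a small isotopy through braids for large $R_+$ and does not change the writhe. The $d$ strands over a point of $\alpha$ are then $z(t),z(t+1/d),\dots,z(t+(d-1)/d)$. They are pairwise distinct because $u$ is not multiply covered and the end is not a trivial cylinder; otherwise the end would be invariant under the deck translation $t\mapsto t+k/d$, and unique continuation would make $u$ a cover.

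Next express the writhe through the windings of the differences $\delta_k(t):=z(t+k/d)-z(t)$, $k=1,\dots,d-1$, over $t\in[0,1]$. Each crossing of the projected braid is a half twist between two strands. It contributes $\pm\tfrac12$ to the winding of the difference for each of the two orderings of that pair. As $t$ runs over $[0,1]$ with $k$ fixed, one traverses every ordered pair $(i,i+k)$ exactly once around $\alpha$. Summing gives
$$
w_\tau(\zeta)=\sum_{k=1}^{d-1}\wind\bigl(\delta_k|_{[0,1]}\bigr).
$$
This bookkeeping, including the sign and orientation conventions of the writhe definition, needs to be checked carefully.

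\textbf{First inequality: bounding each term.} It remains to show $\wind(\delta_k)\le\wind(\mu)$ for each $k$. The operator $A_{\alpha^d}$ is the $d$-fold pullback of $A_\alpha$, so it commutes with the translation $t\mapsto t+k/d$. Hence $\varphi(\cdot+k/d)$ is again a $\mu$-eigenvector. If $\varphi(\cdot+k/d)\ne\varphi$, then $\delta_k=e^{\mu s}(\varphi(t+k/d)-\varphi(t)+o(1))$, with the difference a nonzero $\mu$-eigenvector, whose winding is $\wind(\mu)$ by Theorem \ref{windhwz}(i).

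If instead $\varphi$ is $k/d$-translation invariant, one needs the refined asymptotic expansion of the end (Siefring's higher-order asymptotics). This writes $z$ as a sum of eigenvector terms with eigenvalues $\mu=\mu_1>\mu_2>\cdots$. Then $\delta_k$ is governed by the first term that is not translation invariant, which exists by the non-covering argument above. That term is an eigenvector of some eigenvalue $\mu_j<\mu$, whose winding is $\le\wind(\mu)$ by Theorem \ref{windhwz}(ii). Summing over the $d-1$ values of $k$ gives $w_\tau(\zeta)\le(d-1)\wind(\zeta)$.

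I expect this degenerate case, where the leading eigenvector has a $\Z/k$-symmetry, to be the main obstacle, since it requires the relative asymptotic formula beyond Theorem \ref{reprass}. The writhe/winding bookkeeping is the other point to check carefully.
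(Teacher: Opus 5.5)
The paper does not prove this lemma; it imports it from \cite{HN}. There the first inequality is Hutchings's writhe bound for a single end, obtained from the asymptotic analysis of Hofer--Wysocki--Zehnder and Siefring, and the second is the winding bound for negative eigenvalues. Your outline is essentially that standard argument, and its structure is correct. The second inequality is exactly $\wind(\mu)\le\wind_{<0}(A_{\alpha^d})=\lfloor\CZ(\alpha^d)/2\rfloor$. The identity $w_\tau(\zeta)=\sum_{k=1}^{d-1}\wind(\delta_k)$ holds with the paper's sign convention: for $d=2$ and $z(t)=\epsilon e^{2\pi i t}$ there is one positive crossing and $\wind(\delta_1)=1$. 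If $\varphi$ is invariant under no nontrivial shift $k/d$, the picture of Theorem \ref{reprass} suffices. In that case the strands are of order $e^{\mu s}$ apart, while all coordinate errors are $o(e^{\mu s})$.

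The degenerate case, as written, is not yet an argument. Somewhere injectivity only gives $\delta_k\not\equiv 0$. It does not guarantee that some term of a finite-order expansion of $z$ fails to be $k/d$-invariant, because the non-invariance may sit in the remainder. There is also a scale problem. When $\varphi$ is $k/d$-invariant, the strands $k$ apart are only $e^{\lambda s}$ apart for some $\lambda<\mu$. This can be smaller than the errors of size $e^{\mu s}$ times the exponentially small deviations of $a$ and $\theta$ from $Ts+c_1$ and $dt+c_2$ in Theorem \ref{reprass}. So neither your ``small isotopy'' to $\theta=dt$ nor the disjointness of strands is justified.

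Both problems disappear if you put the end in Siefring's normal form from the start: $u(\psi(s,t))=(Ts+c,\exp_{\alpha^d(t)}U(s,t))$, with $\psi$ asymptotic to a translation. Then $\{R_+\}\times M$ cuts out exactly $t\mapsto(dt,U(s_+,t))$ with $s_+=(R_+-c)/T$. The strands over a point of $\alpha$ are literally $U(s_+,t+k/d)$.

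Set $V_k(s,t):=U(s,t+k/d)$. This is the normal-form representative of the $J$-holomorphic half-cylinder $(s,t)\mapsto u(s,t+k/d)$, which is asymptotic to the same orbit $\alpha^d$. Siefring's relative asymptotic formula then gives one of two alternatives. Either $V_k\equiv U$, which is impossible for $1\le k\le d-1$, since every point of the end would then be a double point of the somewhere injective curve $u$. Or $V_k-U=e^{\lambda_k s}(e_k(t)+r_k(s,t))$, with $e_k$ a $\lambda_k$-eigenvector of $A_{\alpha^d}$ and $r_k\to 0$. Comparing with $|U|=O(e^{\mu s})$ yields $\lambda_k\le\mu$. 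Hence $\wind(\delta_k)=\wind(\lambda_k)\le\wind(\mu)$ by Theorem \ref{windhwz}(ii), and $\delta_k$ is nowhere zero for large $s$. This handles both of your cases at once and makes the braid property and the writhe identity rigorous.
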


\begin{lemma}[Hutchings-Nelson {\cite[Lemma 3.3]{HN}}] \label{L3.4} 
Let $u:\Sigma \setminus \Gamma \to \R \times M$ be a $J$-holomorphic curve, where $J \in \mathcal{J}(\lambda)$. Assume that $u$ is not part of a trivial cylinder or a multiply-covered component.
Let $\alpha$ be an embedded Reeb orbit and assume that $u$ has a negative end at $\alpha^d$, $d\geq 1$. Let $\zeta$ be the intersection of this negative end of $u$ with $\{R_{-}\}\times M$. If $R_{-} \ll 0$, then
$$w_{\tau}(\zeta) \geq (d-1)\wind(\zeta) \geq (d-1)\lceil\CZ(\alpha^d)/2\rceil.$$ 
\end{lemma}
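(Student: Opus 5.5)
This is the negative-end analogue of Lemma \ref{L3.2}, and I would prove it by mirroring that argument with all signs reversed, i.e.\ by the argument of Hutchings \cite{H2009} for negative ends. The plan has three steps.

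\emph{Step 1: asymptotics at the negative end.} Write the end in cylindrical coordinates $(s,t)\in(-\infty,0]\times\R/\Z$. In the tubular neighborhood of $\alpha$ adapted to $\tau$, Theorem \ref{reprass} gives
\[
z(s,t)=e^{\mu s}\bigl(\varphi(t)+h(s,t)\bigr),
\]
where $\mu>0$ is the leading eigenvalue and $\varphi$ is a $\mu$-eigensection of $A_{\alpha^d}$. Since $u$ is not a trivial cylinder, $\varphi\neq 0$. Hence for $R_-\ll0$ the braid $\zeta$ is a small perturbation of the graph of $\varphi$, read as a $d$-strand braid around $\alpha$. It follows that $\wind(\zeta)=\wind(\mu)$. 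The spectral characterization of $\mu_{CZ}$ and Theorem \ref{windhwz}(ii) give $\wind(\mu)\geq \wind_{\geq0}(A_{\alpha^d})$. Checking the even and odd cases of the index, this minimum equals $\lceil \CZ(\alpha^d)/2\rceil$. That proves the second inequality.

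\emph{Step 2: the writhe bound.} View $\zeta$ as a $d$-strand braid whose strands are the $d$ lifts $z(R_-,t+k/d)$, $k=0,\dots,d-1$. The writhe equals the sum, over ordered pairs of distinct strands, of half the winding of their difference, viewed as loops over the $d$-fold cover. Because $u$ is simple and not a trivial cylinder, the strands are pairwise distinct. Each difference $z(s,t)-z(s,t+k/d)$ satisfies an asymptotic formula of the same type (Siefring's relative asymptotics). Its leading eigenvalue is positive, with winding at least $\wind(\mu)$ at a negative end. The reason is that at a negative end the eigenvalues appearing are at least $\mu$, and by Theorem \ref{windhwz}(ii) winding is monotone in the eigenvalue.

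Summing over the $d(d-1)$ ordered pairs and halving, I would expect $w_\tau(\zeta)\geq (d-1)\wind(\zeta)$. One normalization point needs care: the winding of each pairwise difference is computed over the $d$-fold parametrization, which contributes a factor that cancels against the $1/d$ in the counting. This is exactly the bookkeeping of \cite[Lemma 6.7]{H2009}.

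\emph{Main obstacle.} The hardest part is the pairwise asymptotic estimate. It requires the leading term of each difference of strands to be governed by an eigenvalue of the same operator that is $\geq\mu$ (reversed relative to the positive end, where the eigenvalues are $\leq\mu$). It also requires the difference not to vanish identically. I would take both from Siefring's relative asymptotic analysis, as in the proof of Lemma \ref{L3.2}. The only change is the sign reversal of $s$, which flips every inequality.
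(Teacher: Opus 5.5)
Your proposal is correct and is the standard argument behind this lemma: the paper gives no proof of its own and only quotes \cite[Lemma 3.3]{HN}, which rests on the same two ingredients you use, namely Siefring's relative asymptotics for the shifted differences and the pairwise-winding formula for the writhe (as in Hutchings' work). The one slip is your normalization: with $f_m(t):=z(R_-,t)-z(R_-,t+m/d)$, $t\in\R/\Z$, one has exactly $w_\tau(\zeta)=\sum_{m=1}^{d-1}\wind(f_m)$, because the sum over the $d(d-1)$ ordered pairs of lifts over the $d$-fold cover equals $d\,w_\tau(\zeta)$ and must be divided by $d$ rather than halved; then your observation that $\wind(f_m)\geq\wind(\zeta)$ for each $m$ gives $w_\tau(\zeta)\geq (d-1)\wind(\zeta)$ directly.
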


\begin{lemma}[Hutchings-Nelson {\cite[Lemma 3.5]{HN}}] \label{L3.5} 
Let $\alpha$ be an embedded Reeb orbit with tubular neighborhood $N$ and $\tau$ a trivialization of $\alpha^*\xi$. Let $u$ be a $J$-holomorphic curve, $J \in \mathcal{J}(\lambda)$, whose image is contained in $[R_{-},R_{+}]\times N$. Assume that $u$ has no multiply-covered components. Let $\zeta_{+}-\zeta_{-}$ be the boundary of $u$, where $\zeta_{\pm}$ is a braid in $\{R_{\pm}\}\times N$. Then,
$$\chi(u)+w_{\tau}(\zeta_{+})-w_{\tau}(\zeta_{-}) \geq 0,$$
where $\chi(u)=\chi(\dot \Sigma)$ is the Euler characteristic of the domain of $u$.
\end{lemma}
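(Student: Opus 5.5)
The plan is to read $u$ as a one-parameter family of braids $\zeta_s := u\cap(\{s\}\times N)$, $s\in[R_-,R_+]$, and compare the writhes $w_\tau(\zeta_{R_+})$ and $w_\tau(\zeta_{R_-})$ by tracking how the writhe changes as $s$ decreases from $R_+$ to $R_-$. We may assume the boundary slices are braids, as in the statement.

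The first step is to show that the projection
$$f := (a,\theta)\circ u : \dot\Sigma \cap u^{-1}([R_-,R_+]\times N) \to [R_-,R_+]\times S^1$$
is an orientation-preserving branched covering whose branch points all have positive local index.
\begin{itemize}
\item After shrinking the tubular neighborhood $N$ (this does not change the writhes of the boundary braids, which are defined via $\phi_\tau$), the complex tangent lines of $u$ are $C^0$-close to $\mathrm{span}(\partial_a, R)$ wherever $u$ is not vertical in the $\xi$-direction.
\item Since $J\partial_a=R$ and $J$ preserves $\xi$, the composition $df\circ du$ is a small perturbation of a complex-linear map. Hence every critical point of $f$ is isolated and has positive local degree, exactly as for holomorphic maps between Riemann surfaces.
\item Here we use that $u$ has no multiply covered components. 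Otherwise a component could lie in a trivial cylinder with degenerate projection, or a critical point might fail to be isolated.
\end{itemize}
If necessary, I would perturb $u$ slightly within its $C^1$-neighborhood, keeping $\partial u$ fixed, so that all branch points are simple. I would also arrange that the $a$-values of the branch points and of the double points of $u$ are distinct.

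The second step is to apply Riemann--Hurwitz to $f$. The base is an annulus with $\chi=0$, so
$$\chi(u) = -b,$$
where $b\ge 0$ is the number of simple branch points. The claimed inequality then becomes $w_\tau(\zeta_+)-w_\tau(\zeta_-)\ge -b$.

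The third step is to track the writhe of $\zeta_s$ as $s$ decreases. For $s$ outside a finite set, $\zeta_s$ is a braid around $\alpha$, and $w_\tau(\zeta_s)$ is locally constant in $s$. It can jump only at two kinds of critical levels.
\begin{itemize}
\item \emph{Branch-point levels.} At the level of a simple branch point, two strands merge and re-split. Locally this is a half-twist, so the writhe changes by exactly $\pm1$. Each such level therefore contributes at least $-1$ to $w_\tau(\zeta_+)-w_\tau(\zeta_-)$.
\item \emph{Double-point levels.} Two strands can cross through each other in the projection $p\circ f\circ\phi_\tau$ only where they meet in the $3$-dimensional slice, i.e.\ at a double point of $u$ in $\R\times M$. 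By positivity of intersections for $J$-holomorphic curves (no multiply covered components), each such point has positive local sign. Passing it changes the writhe by $+2$ in the direction from $\zeta_-$ to $\zeta_+$. One has to check this sign against the counterclockwise-positive crossing convention, using the orientation of $f$ from the first step.
\end{itemize}
Summing these contributions gives
$$w_\tau(\zeta_+)-w_\tau(\zeta_-)\ \ge\ -b+2\delta\ \ge\ -b=\chi(u),$$
where $\delta\ge 0$ is the number of double points, and this is the claim.

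The main obstacle is the first step: justifying that the projection to the $(a,\theta)$-cylinder is a branched cover with only positive branch points uniformly on all of $u$, not just near the asymptotic ends. I would first try to derive the needed closeness of $Tu$ to $\mathrm{span}(\partial_a,R)$ from the smallness of $N$ together with the $C^1$-closeness given by the asymptotic description in Theorem~\ref{reprass}. Failing that, I would argue by a homotopy: deform $J$ near $\alpha$ to a $J$ for which $(a,\theta)$ is $J$-holomorphic, so that the projection is honestly holomorphic, and note that the writhes of the boundary braids and $\chi(u)$ are unchanged along the deformation. The sign bookkeeping of the third step, in the writhe convention fixed by the map $f$ in the definition of $w_\tau$, is the other place where care is needed.
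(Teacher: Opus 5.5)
The paper does not prove this lemma; it quotes it from \cite{HN}. So your argument has to stand on its own, and as written it proves the wrong inequality.

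Riemann--Hurwitz gives $\chi(u)=-b$. Then the claim $\chi(u)+w_\tau(\zeta_+)-w_\tau(\zeta_-)\ge 0$ is equivalent to $w_\tau(\zeta_+)-w_\tau(\zeta_-)\ge b$, not $\ge -b$. Your final chain, $w_\tau(\zeta_+)-w_\tau(\zeta_-)\ge -b+2\delta\ge -b=\chi(u)$, only yields $w_\tau(\zeta_+)-w_\tau(\zeta_-)-\chi(u)\ge 0$, which is weaker by $2b$.

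This sign slip hides the real content. Counting each branch point as ``$\pm1$, hence at least $-1$'' cannot work. Take a braided pair of pants with two parallel strands and no crossings below, and the closure of $\sigma_1^{-1}$ above. It has $\chi=-1$ and $w_\tau(\zeta_+)-w_\tau(\zeta_-)=-1$, so it violates the lemma while fitting your bookkeeping.

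You must show that every branch point contributes exactly $+1$ when passing upward. This is where holomorphicity enters, just as for double points. In the model $w^2=z-z_0$ with $z=s+it$, the relative position of the two strands turns by $-\pi/2$ below the branch level and by $+\pi/2$ above it, i.e.\ one positive half-twist. With that, $w_\tau(\zeta_+)-w_\tau(\zeta_-)=b+2\delta$, and so $\chi(u)+w_\tau(\zeta_+)-w_\tau(\zeta_-)=2\delta\ge0$.

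Second, the branched-cover structure of your first step is not available for $J\in\mathcal{J}(\lambda)$, and neither of your remedies provides it. Let $p$ be a critical point of $a\circ u$ with $du(p)\neq0$. The Cauchy--Riemann equations force $\lambda\circ du(p)=0$, so $du(T_p\dot\Sigma)=\xi$. Away from $\alpha$, $\xi$ is generally not the vertical plane $\ker da\cap\ker d\theta$; they cannot agree on an open set, since $\xi$ is non-integrable. Hence $d\big((a,\theta)\circ u\big)(p)$ typically has rank one. Such points produce small fold regions where the projection reverses orientation and some slices $\zeta_s$ are not braids. They can occur for curves close to a branched cover of a trivial cylinder, which is exactly where this lemma is applied.

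Your remedies do not help here. Theorem~\ref{reprass} controls only the ends. Deforming $J$ until $(a,\theta)$ is holomorphic is impossible inside $\mathcal{J}(\lambda)$, because the only $J$-complex line contained in $\ker da$ is $\xi$. In any case $u$ would not be holomorphic for the deformed structure, so nothing is transported.

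The standard route, which needs no projection at all, is the relative adjunction formula. Let $u'$ be the push-off of $u$ in a constant $\tau$-direction.
\begin{itemize}
\item Near $\zeta_\pm$ this push-off is the blackboard framing of the diagrams defining $w_\tau$, so $u\cdot u'=w_\tau(\zeta_+)-w_\tau(\zeta_-)$.
\item On the other hand, $u\cdot u'=e(N_u)+2\delta(u)$, where $e(N_u)$ is the relative Euler number of the normal bundle with respect to that push-off.
\item Also $c_1(T(\R\times M)|_u)=\chi(u)+e(N_u)$ relative to $\tau$, and this class vanishes over $[R_-,R_+]\times N$, so $e(N_u)=-\chi(u)$.
\end{itemize}
Hence $\chi(u)+w_\tau(\zeta_+)-w_\tau(\zeta_-)=2\delta(u)$. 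After perturbing singularities, $\delta(u)\ge0$ by positivity of intersections, and this is exactly where the absence of multiply covered components is used.
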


The following proposition is an adaptation of Proposition \ref{3.1} to Case 1.

\begin{prop} \label{naopode1} 
Let $\lambda$ be a nondegenerate contact form on a closed $3$-manifold $M$,  and let $J \in \mathcal{J}(\lambda)$. Let $u=(u_1,u_2)$ be a $J$-holomorphic building as in Case 1. Then $u$ is not in the closure of $\M_{0,2}^J(\alpha^{d+1 };\alpha^d)/\R$.
\end{prop}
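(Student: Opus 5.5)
We argue by contradiction, following the writhe argument of Hutchings and Nelson for Proposition \ref{3.1}. Suppose a sequence $u^k \in \M_{0,2}^J(\alpha^{d+1};\alpha^d)$ converges to the building $u=(u_1,u_2)$ of Case 1. Fix a trivialization $\tau$ of $\alpha^*\xi$ that extends over the index-$1$ plane in $u_2$, so that $\CZ(\alpha)=2$. Since $\alpha$ is embedded, $\alpha$ is hyperbolic and $\CZ(\alpha^m)=2m$ for all $m$.

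\textbf{Slicing the approximating cylinders.} For $k$ large, choose $R_+ \gg 0$ and $R_- \ll 0$, shifted appropriately for each $u^k$. Let $\zeta_+$ be the braid where the positive end of $u^k$ (at $\alpha^{d+1}$) meets $\{R_+\}\times M$. Let $\zeta_-$ be the union of two pieces: the braid from the negative end at $\alpha^d$, and the braid near the top of the plane, i.e.\ where $u^k$ is close to the positive end of the index-$1$ plane at $\alpha$. The part $u^k\cap([R_-,R_+]\times N)$ is close to $u_1$ together with trivial-cylinder pieces. It has genus zero with three boundary components, so $\chi=-1$.

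\textbf{Applying the writhe lemmas.} Lemma \ref{L3.5}, applied to this piece (which is not multiply covered, since $u^k$ is a simple cylinder), gives
\[
-1+w_\tau(\zeta_+)-w_\tau(\zeta_-)\geq 0.
\]
Lemma \ref{L3.2}, applied to the positive end of $u^k$ at $\alpha^{d+1}$, gives $w_\tau(\zeta_+)\le d\,\wind(\zeta_+)$. The asymptotics of $u^k$ are governed by the eigenvalues of $A_{\alpha^{d+1}}$ below zero, so $\wind(\zeta_+)\le \lfloor \CZ(\alpha^{d+1})/2\rfloor = d+1$. Lemma \ref{L3.4} bounds the writhe of the negative-end component of $\zeta_-$ from below by $(d-1)\lceil \CZ(\alpha^d)/2\rceil = (d-1)d$. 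The component of $\zeta_-$ coming from the plane is a single strand, so its self-writhe is zero. Lemma \ref{wind pi} gives that this strand winds at least once, hence exactly once since $\CZ(\alpha)=2$. The linking between this strand and the $d$-strand braid contributes $2\cdot(\text{linking})$ to $w_\tau(\zeta_-)$. Since the $d$-strand braid has winding $\geq d$ (from $\lceil \CZ(\alpha^d)/2\rceil = d$), and since the plane strand is distinct and lies outside the trivial cylinder over $\alpha^d$, each linking term should be at least $\min$ of the two winding numbers. This gives
\[
w_\tau(\zeta_-)\ge d(d-1)+2d.
\]

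\textbf{Deriving the contradiction.} Combining the estimates:
\[
-1+d(d+1)-d(d-1)-2d\ge 0 \quad\Longleftrightarrow\quad -1\ge 0,
\]
which is impossible. The constraints forcing $w_\tau(\zeta_+) \le d(d+1)$ hold because the eigenvalue bound is sharp when $\CZ(\alpha^{d+1})$ is even. This is where index $1$ differs from index $2$: here the plane's asymptotic winding is exactly $1$, just as Hutchings and Nelson exploit for Proposition \ref{3.1}.

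\textbf{Main obstacle.} The delicate step is the linking estimate between the strand from the plane and the strands from the $\alpha^d$ end. It requires comparing asymptotic eigenvalues on both sides of the neck and verifying that the plane strand genuinely lies "outside" the $d$-strand braid. I expect to handle it as in \cite{HN}, via the ordering of eigenvalue winding numbers (Theorem \ref{windhwz}) and the fact that the plane's winding $1$ is at most $\wind_{\ge0}$ at $\alpha$.
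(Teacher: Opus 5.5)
Your chain of estimates is the paper's: Lemmas \ref{L3.5}, \ref{L3.2}, \ref{L3.4} and \ref{wind pi}, a linking contribution of exactly $2d$, and the contradiction $0\le -1$. However, the step you flag as the main obstacle is not proved, and the route you suggest for it is not what makes it work.

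\textbf{The linking term.} The claim that a linking number is ``at least the minimum of the two winding numbers'' is not true for disjoint braids in general. For one-strand braids $z_A,z_B$, the linking number is the winding of $z_A-z_B$, which the individual windings do not constrain. Nor does Theorem \ref{windhwz} help. It concerns eigensections of $A_\alpha$, which govern asymptotic ends of a curve, not two slices of $u^k$ at a finite height in the neck.

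The missing idea is that the separation is arranged by the choice of slicing level:
\begin{itemize}
\item Translate the plane in $u_2$ downward so that its slice at height $0$ lies at distance at least $\varepsilon$ from $\alpha$.
\item For $k$ large, the $d$-strand braid $\zeta_1^k$ that $u^k$ cuts out at height $0$ lies within $\varepsilon/3$ of $\alpha$, because that part of $u^k$ converges to the trivial cylinder over $\alpha^d$.
\item The one-strand braid $\zeta_2^k$ at the same height lies within $\varepsilon/3$ of the plane's slice.
\end{itemize}
So $\zeta_1^k$ is inside and $\zeta_2^k$ is outside, and you get the exact identity $w_\tau(\zeta_1^k\cup\zeta_2^k)=w_\tau(\zeta_1^k)+2d\,\wind(\zeta_2^k)+w_\tau(\zeta_2^k)=w_\tau(\zeta_1^k)+2d$. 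The winding of the $d$-strand braid plays no role, and no comparison of eigenvalues across the neck is needed.

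\textbf{The application of Lemma \ref{L3.5}.} The lemma requires the piece to lie in $[R_-,R_+]\times N$ with boundary braids at the heights $R_\pm$. Your lower boundary mixes a braid deep in the negative end at $\alpha^d$, where Lemma \ref{L3.4} applies, with the plane strand at a much greater height. Also, $u^k\cap([R_-,R_+]\times N)$ is not a pair of pants: the disk part of $u^k$ follows the plane, which must leave $N$ since $\alpha$ is not contractible in $N$.

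The paper cuts both pieces at the same height $0$ and applies Lemma \ref{L3.5} twice:
\begin{itemize}
\item to the pair of pants between $0$ and $R_+$ (with $\chi=-1$), giving $-1+w_\tau(\zeta_+^k)-w_\tau(\zeta_1^k\cup\zeta_2^k)\ge 0$;
\item to the half-cylinder between $R_-$ and $0$ (with $\chi=0$), giving $w_\tau(\zeta_1^k)\ge w_\tau(\zeta_-^k)\ge d(d-1)$.
\end{itemize}
With these two repairs your argument becomes the paper's proof.

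A minor point: hyperbolicity of $\alpha$ follows from $\CZ(\alpha)=2$ being even, not from embeddedness. Embeddedness is what the braid lemmas require.
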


\begin{proof}
Let $\alpha$ be the asymptotic limit of the index-$1$ plane $v_2$ in $u_2$. Then $\CZ(\alpha)=2$, where $\tau$ is a trivialization of $\alpha^*\xi$ that extends to a trivialization of $\xi$ over $v_2^*\xi$. In particular, $\alpha$ is embedded and hyperbolic by assumption.
After translating $v_2$ downward by a sufficiently large real number if necessary, we may fix a tubular neighborhood $N\subset M$ of $\alpha$ and a real number $\varepsilon>0$ such that
${\rm dist}\left(v_2\left(v_2^{-1}(\{0\}\times N)\right),\alpha\right)\geq \varepsilon$ and for every $s\geq 0$, $v_2(v_2^{-1}(\{s\} \times N))$ is a braid around $\alpha$.

Suppose by contradiction that the sequence of $J$-holomorphic cylinders $\bar{w}_k$ in the moduli space $\M_{0,2}^J(\alpha^{d+1};\alpha^d)/\R$ converges to $u= (u_1,u_2)$. Denote by ${w}_k$ a parametrization of $\bar{w}_k$. 
For $k$ sufficiently large, the following holds:

\begin{itemize}
    \item ${w}_k^{-1}\left((-\infty,0] \times M\right)$ has two components:  a half-cylinder $C_k$ and a closed disk $D_k$;
    
    \item $\zeta_1^k=w_k(C_k) \cap (\{0\}\times N)$ is a braid whose projection to $\alpha$ has degree $d$ and ${\rm dist}(\zeta_1^k,\alpha) \leq \varepsilon/3;$
    
    \item $\zeta_2^k=w_k(D_k)\cap (\{0\}\times N)$ is a braid whose projection to $\alpha$ has degree $1$ and
  $${\rm dist} \left(\zeta_2^k,v_2\left(v_2^{-1}(\{0\}\times N)\right)\right) \leq \varepsilon/3,$$
    \item $\zeta_{+}^k={w}_k\big(w_k^{-1}([0,+\infty)\times M)\big) \cap (\{R_{+}\}\times N)$, $R_{+} \gg 0$ depending on $k$, is a braid whose projection to $\alpha$ has degree $d+1$,
    \item $\zeta_{-}^k={w}_k\big(w_k^{-1}((-\infty,0]\times M)\big) \cap (\{R_{-}\}\times N)$, $R_{-}\ll 0$ depending on $k$, is a braid whose projection to $\alpha$ has degree $d$.
\end{itemize}

\begin{figure}[ht]
\centering
\includegraphics[scale=0.3]{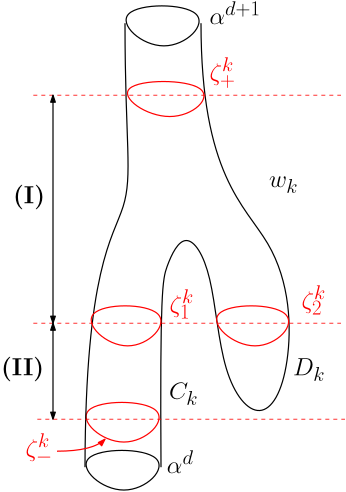}
\caption{Braids $\zeta^k_{1,2}$ and $\zeta^k_{\pm}$ in the image of the curves $w_k$ that converge to the building $u$.}
\label{fig:u_k}
\end{figure}

From the conditions above, we have ${\rm dist}(\zeta_1^k,\zeta_2^k) \geq \varepsilon/3$, and thus $\zeta_1^k$ and $\zeta_2^k$ do not intersect. Hence $\zeta_1^k \cup \zeta_2^k$ is a braid whose writhe is given by 
$
w_{\tau}(\zeta_1^k \cup \zeta_2^k)=w_{\tau}(\zeta_1^k)+2d\wind(\zeta_2^k)+w_{\tau}(\zeta_2^k).
$
Since $\zeta_2^k$ projects into $\alpha$ with degree $1$, it follows that $\zeta_2^k$ is a trivial knot, which implies that $w_{\tau}(\zeta_2^k)=0.$
Using Lemma \ref{L3.5} on the domain of $w_k$ bounded by $\zeta_{+}^k$ and $(\zeta_{1}^k \cup \zeta_2^k)$, see Figure \ref{fig:u_k}, we obtain
$$ 
-1+w_{\tau}(\zeta_{+}^k)-w_{\tau}(\zeta_1^k \cup \zeta_2^k) \geq 0.
$$
Using  Lemma \ref{L3.5} again on the domain of $w_k$ bounded by $\zeta_1^k$ and $\zeta_{-}^k$, we obtain
\begin{equation*}
w_{\tau}(\zeta_1^k)-w_{\tau}(\zeta_{-}^k) \geq 0.
\end{equation*}
We conclude that 
\begin{eqnarray*}
2d \wind(\zeta_2^k) =w_{\tau}(\zeta_1^k \cup \zeta_2^k)-w_{\tau}(\zeta_1^k) \leq -1 + w_{\tau}(\zeta_{+}^k)-w_{\tau}(\zeta_{-}^k).
\end{eqnarray*}
Now recall that $\CZ(\alpha)=2$. Using Lemma \ref{L3.2} to $w_k$, we obtain
\begin{equation*}
w_{\tau}(\zeta_{+}^k) \leq d \left\lfloor \frac{\CZ(\alpha^{d+1})}{2}\right\rfloor = d \left\lfloor (d+1)\frac{\CZ(\alpha)}{2}\right\rfloor = d(d+1).
\end{equation*}
Using  Lemma \ref{L3.2} to $v_2$, and the fact that $w_k$ converges to the building $u=(u_1,u_2)$, we obtain
$$\wind(\zeta_2^k) \leq \left\lfloor \frac{\CZ(\alpha)}{2}\right\rfloor = 1.$$
On the other hand, by the Lemma \ref{wind pi}, we have $\wind(\zeta_2^k) \geq 1$. We concluded that $\wind(\zeta_2^k)=1.$
Using Lemma \ref{L3.4} to $w_k$, we obtain
\begin{equation*}
w_{\tau}(\zeta_{-}^k) \geq (d-1)\left\lceil \frac{\CZ(\alpha^d)}{2} \right\rceil \geq (d-1)\left\lceil d \cdot\frac{\CZ(\alpha)}{2} \right\rceil =d(d-1).
\end{equation*}
Putting the inequalities above together, we finally obtain
\begin{eqnarray*}
0 &\leq & -1 + w_{\tau}(\zeta_{+}^k)-w_{\tau}(\zeta_{-}^k)-2d\wind(\zeta_2^k) \\
&\leq & -1 + d(d+1) -d(d-1)-2d\\
&=& -1,
\end{eqnarray*}
which is absurd. We conclude that there exists no sequence in $\M^J_{0,2}(\alpha^{d+1};\alpha^d)/\R$ converging to the building $u$, i.e. $u$ is not in the closure of $\M^J_{0,2}(\alpha^{d+1};\alpha^d)/\R$.
\end{proof}

Finally, we adapt Proposition \ref{3.1} to Case 2.

\begin{prop} \label{naopode2} 
Let $\lambda$ be a nondegenerate contact form on a closed $3$-manifold $M$,  and let $J \in \mathcal{J}(\lambda)$. Let $u=(u_1,u_2,u_3)$ be a $J$-holomorphic building as in Case $2$. Then $u$ is not in the closure of $\M_{0,2}^J(\alpha^{d+1};\alpha^d)/\R$.
\end{prop}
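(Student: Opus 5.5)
I will argue by contradiction, following the scheme of Proposition \ref{naopode1}, with one extra writhe estimate across the middle level. Suppose a sequence $\bar w_k\in\M^J_{0,2}(\alpha^{d+1};\alpha^d)/\R$ converges to $u=(u_1,u_2,u_3)$. Let $v_2$ be the index-$1$ cylinder in $u_2$, running from $\alpha$ to $\tilde\alpha$, and $v_3$ the index-$1$ plane in $u_3$ asymptotic to $\tilde\alpha$. Since $\ind(v_3)=1$, we have $\mu_{CZ}^{\tilde\tau}(\tilde\alpha)=2$ in the trivialization $\tilde\tau$ extending over $v_3$. Gluing $v_2$ and $v_3$ along $\tilde\alpha$ gives a capping disk for $\alpha$, so $\alpha$ is contractible and
\[
\CZ(\alpha)=\ind(v_2)+\mu_{CZ}^{\tilde\tau}(\tilde\alpha)=3
\]
in the trivialization $\tau$ induced by this capping (here $c_\tau$ of $v_2$ vanishes). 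By the standing assumption $\alpha$ is embedded. Since $\CZ(\alpha)$ is odd, $\alpha$ is elliptic, and this changes the numerology compared to Case 1.

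Next I set up braids exactly as before. After translating the lower levels downward, for $k$ large the preimage $w_k^{-1}((-\infty,0]\times M)$ is a half-cylinder $C_k$ together with a disk $D_k$. Let $\zeta_1^k$ be the degree-$d$ braid near $\alpha$ coming from $C_k$, $\zeta_2^k$ the degree-$1$ braid coming from $D_k$ (close to the positive end of $v_2$), and $\zeta_\pm^k$ the braids at the ends of $w_k$. The two braids are separated, so $w_\tau(\zeta_1^k\cup\zeta_2^k)=w_\tau(\zeta_1^k)+2d\,\wind(\zeta_2^k)$ and $w_\tau(\zeta_2^k)=0$. Applying Lemma \ref{L3.5} twice, as in Proposition \ref{naopode1}, gives
\[
2d\,\wind(\zeta_2^k)\le -1+w_\tau(\zeta_+^k)-w_\tau(\zeta_-^k).
\]
For the end braids, Lemma \ref{L3.2} yields $w_\tau(\zeta_+^k)\le d\lfloor\CZ(\alpha^{d+1})/2\rfloor$ and Lemma \ref{L3.4} yields $w_\tau(\zeta_-^k)\ge (d-1)\lceil\CZ(\alpha^d)/2\rceil$. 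For elliptic $\alpha$ with rotation number $\theta$, $1<\theta<2$, we have $\CZ(\alpha^m)=2\lfloor m\theta\rfloor+1$, hence $\lfloor\CZ(\alpha^{d+1})/2\rfloor=\lfloor(d+1)\theta\rfloor$ and $\lceil\CZ(\alpha^d)/2\rceil=\lfloor d\theta\rfloor+1$.

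The key new input is a lower bound $\wind(\zeta_2^k)\ge \wind_{\ge0}(A_\alpha)=\lfloor\theta\rfloor+1=2$. I would obtain it as follows. Since $\zeta_2^k$ is close to the positive end of $v_2$, $\wind(\zeta_2^k)=\wind_\infty(v_2;z_+)$. By Lemma \ref{zeroswindcf} applied to $v_2$ in the trivialization $\tau$ (zeros of $\pi\circ dv_2$ are nonnegative), $\wind_\infty(v_2;z_+)\ge \wind_\infty(v_2;z_-)$. Since $\tau$ restricted to $\tilde\alpha$ is $\tilde\tau$, Lemma \ref{wind pi} applied to $v_3$ together with $\mu_{CZ}^{\tilde\tau}(\tilde\alpha)=2$ shows that the negative end of $v_2$ has $\wind_\infty\ge 1$; more precisely the negative-end winding is at least $\wind_{\ge0}(A_{\tilde\alpha})=1$. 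This gives $\wind(\zeta_2^k)\ge1$ immediately. The main obstacle is upgrading this to $2$. For that I would use that $v_2$ is a somewhere injective index-$1$ cylinder, hence immersed, so $\windd_\pi(v_2)=0$ and $\wind_\infty(v_2)=\chi=0$ in the relevant count. Then $\wind_\infty(v_2;z_+)=\wind_\infty(v_2;z_-)+\big(\text{correction from the Fredholm index}\big)$. Comparing with $\wind_{<0}(A_\alpha)=1$ and $\wind_{\ge0}(A_{\tilde\alpha})=1$, the index-$1$ condition should force the positive end to sit on the $\wind=2$ side. If that fails, I will instead use the weaker bound $\wind(\zeta_2^k)\ge1$ and check whether the numerology below still closes.

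Finally, substituting $\wind(\zeta_2^k)\ge2$ and the end estimates gives
\[
0\le -1+d\lfloor(d+1)\theta\rfloor-(d-1)\big(\lfloor d\theta\rfloor+1\big)-4d .
\]
Since $\lfloor(d+1)\theta\rfloor\le\lfloor d\theta\rfloor+2$, the right-hand side is at most $-1+\lfloor d\theta\rfloor+2d-(d-1)-4d$. This is negative because $\lfloor d\theta\rfloor\le 2d-1$. That contradiction shows $u$ is not in the closure of $\M^J_{0,2}(\alpha^{d+1};\alpha^d)/\R$.
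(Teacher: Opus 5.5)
Your braid framework (the braids $\zeta^k_{1,2},\zeta^k_\pm$, two applications of Lemma \ref{L3.5}, and the end estimates from Lemmas \ref{L3.2} and \ref{L3.4}) is the Hutchings--Nelson argument that the paper invokes. However, the step you call the key new input is false.

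The braid $\zeta_2^k$ lies near the \emph{positive} end of $v_2$ at $\alpha$. There the leading eigenvalue is negative, so
\[
\wind(\zeta_2^k)=\wind_\infty(v_2;z_+)\le \wind_{<0}(A_\alpha)=\lfloor \CZ(\alpha)/2\rfloor=1 .
\]
Combine this with your correct lower bound $\wind_\infty(v_2;z_+)\ge \wind_\infty(v_2;z_-)\ge \wind_{\geq 0}(A_{\tilde\alpha})=1$. That lower bound comes from $\windd_\pi(v_2)\ge 0$ via Proposition \ref{prop5.6HWZ}, not from Lemma \ref{zeroswindcf}, which concerns sections of $\ker (D_u^N)^*$. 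Together they force $\wind(\zeta_2^k)=1$ exactly. This is precisely what the paper proves: near $\alpha$, $v_2$ behaves like the index-$2$ plane of Proposition \ref{3.1}. No ``upgrade'' to $2$ is possible, since $\windd_\pi(v_2)=0$ only gives $\wind_+=\wind_-$, which pins both at $1$.

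With the correct value $\wind(\zeta_2^k)=1$, your fallback does not close as written. Using $\lfloor (d+1)\theta\rfloor\le\lfloor d\theta\rfloor+2$, the right-hand side is bounded only by $\lfloor d\theta\rfloor-d$, which is $\ge 0$ because $\theta>1$. For example, $d=1$ and $\theta=1.7$ give just $0\le 0$.

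The missing ingredient is the hypothesis $\ind(u_1)=0$. Since $u_1$ covers the trivial cylinder over $\alpha$,
\[
0=\ind(u_1)=1+\CZ(\alpha^{d+1})-\CZ(\alpha^{d})-\CZ(\alpha), \qquad\text{so}\qquad \CZ(\alpha^{d+1})=\CZ(\alpha^{d})+2 .
\]
This is also the real reason $\alpha$ is elliptic. Oddness of $\CZ(\alpha)$ alone does not exclude negative hyperbolic orbits, but any hyperbolic $\alpha$ would give $\ind(u_1)=1$. For elliptic $\alpha$ with $1<\theta<2$, the relation says $\lfloor (d+1)\theta\rfloor=\lfloor d\theta\rfloor+1$. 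Substituting this and $\wind(\zeta_2^k)=1$ gives
\[
0\le -1+d\big(\lfloor d\theta\rfloor+1\big)-(d-1)\big(\lfloor d\theta\rfloor+1\big)-2d=\lfloor d\theta\rfloor-2d\le -1 .
\]
The final inequality holds because $d\theta<2d$ is not an integer. This is the contradiction of Proposition \ref{3.1} that the paper's proof relies on.
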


\begin{proof}
Since the $J$-holomorphic plane in $u_3$ has index $1$, we have $\CZ(\widetilde{\alpha})=2$. Since the $J$-holomorphic cylinder $v_2$ in $u_2$ has index $1$, we conclude that $\alpha$ is contractible and $\CZ(\alpha)=3$. Our hypotheses imply that both $\alpha$ and $\widetilde \alpha$ are embedded. Moreover, $$0\leq \text{wind}_\pi(v_2)=\wind_\infty(v_2;+\infty)-\wind_\infty(v_2;-\infty),$$ with $$\wind_\infty(v_2;+\infty)\leq \lfloor\cz(\alpha)/2\rfloor=1 \quad \mbox{ and } \quad \wind_\infty(v_2;-\infty)\geq  \lceil \cz(\widetilde \alpha)/2\rceil = 1.$$ We conclude that $\wind_\infty(v_2;+\infty)=\wind_\infty(v_2;-\infty)=1$. In particular, $v_2$ behaves near $\alpha$ as an index-$2$ plane asymptotic to $\alpha$. 

Now we consider sequences $\bar{w}_k \in \M_{0,2}^J(\alpha^{d+1};\alpha^d)/\R$ converging to the building $u= (u_1,u_2,u_3)$, and braids $\zeta_{1,2}^k, \zeta^k_\pm$, with $\zeta^k_{1,2}$ close to $\alpha$, to obtain a contradiction as in the proof of Proposition \ref{3.1}, see \cite[Proposition 3.1]{HN}.
\end{proof}

\subsection{Buildings as in Theorem \ref{C}-(iii.6)}

In this section we consider buildings $v=(v_1,v_2)$ as in Theorem \ref{C}-\ref{itm:vi}. Recall that $v_1$ is an index-$(2-d)$ curve with a positive end at a Reeb orbit $\alpha_1^d$, a negative end at $\alpha_2^d$, and $d$ negative ends at a common index-$2$ simple and contractible Reeb orbit $\alpha_3$. The curve $v_1$ covers a somewhere injective curve $\mathfrak{v}_1$. The covering has multiplicity $d$ and no branching points. The curve $\mathfrak{v}_1$ has a positive end at $\alpha_1$, a negative end at $\alpha_2$, and a negative end at $\alpha_3$. The second level $v_2$ is formed by a trivial cylinder over $\alpha_2^d$ and $d$ index-$1$ planes asymptotic to $\alpha_3$. 

Consider a trivialization $\tau$ of $\xi$ along $\alpha_1,\alpha_2$ and $\alpha_3$, which extends to a trivialization of $\mathfrak{v}_1^*\xi$ and $ w^*\xi$, where $w:\mathbb{D} \to M$ is a continuous disk whose boundary covers $\alpha_3$. The $d-$covering $v_1 \to \mathfrak{v}_1$ and $\tau$ induce a trivialization of $v_1^*\xi$, which extends to trivializations of $\xi $ along $\alpha_1^d$ and $\alpha_2^d$, also denoted by $\tau$. 

\begin{lemma}\label{odepeq}
Assume that $\alpha_1^d, \alpha_2^d$ are good orbits satisfying $\CZ(\alpha_1^d)-\CZ(\alpha_2^d)=2$. Then the following assertions hold:
\begin{itemize}
    \item[(i)] If $\alpha_1$ and $\alpha_2$ are both hyperbolic, then $d=1$.
    \item[(ii)] If $\alpha_1$ is elliptic and $\alpha_2$ is hyperbolic, then $\alpha_2$ is negative hyperbolic and $d=1$.
    \item[(iii)] If $\alpha_1$ is hyperbolic and $\alpha_2$ is elliptic, then $\alpha_1$ is negative hyperbolic and $d=1$.
\end{itemize}
In particular, if $d>1$, then $\alpha_1$ and $\alpha_2$ are elliptic.
\end{lemma}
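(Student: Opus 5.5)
The proof is a direct index computation using the iteration formulas of Section 2. We fix the trivialization $\tau$ along $\alpha_1,\alpha_2$ described above, with induced trivializations $\tau^d$ along the iterates. For a hyperbolic orbit $\gamma$ we use $\CZ(\gamma^d)=d\,\CZ(\gamma)$. For an elliptic orbit we use $\CZ(\gamma^d)=2\lfloor d\theta\rfloor+1$ with $\theta=\theta^\tau(\gamma)$ irrational. Recall also that a hyperbolic orbit is positive hyperbolic exactly when its index is even. Consequently an even iterate of a negative hyperbolic orbit is bad, so the goodness hypothesis gives: if some $\alpha_i$ is negative hyperbolic, then $d$ is odd.

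\textbf{Case (i).} Here $d(\CZ(\alpha_1)-\CZ(\alpha_2))=2$, so $d\in\{1,2\}$. If $d=2$ then $\CZ(\alpha_1)-\CZ(\alpha_2)=1$. This forces exactly one of $\alpha_1,\alpha_2$ to have odd index, i.e.\ to be negative hyperbolic, and then its double cover is bad, contradicting the hypothesis. Hence $d=1$.

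\textbf{Case (ii).} Now $\CZ(\alpha_1^d)$ is odd and $d\,\CZ(\alpha_2)=\CZ(\alpha_1^d)-2$ is odd. So $d$ is odd and $\CZ(\alpha_2)$ is odd, which means $\alpha_2$ is negative hyperbolic. To show $d=1$, write $\CZ(\alpha_2)=2m+1$. The condition becomes $2\lfloor d\theta_1\rfloor+1=d(2m+1)+2$.

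This integer identity alone does not force $d=1$. For instance $\theta_1$ slightly above $m+\tfrac12+\tfrac1d$ gives $\lfloor d\theta_1\rfloor=dm+\tfrac{d+1}{2}$, which solves it. So the proof must use more than indices, and this is the main obstacle. I expect the missing input to come from the geometry of the somewhere injective pair of pants $\mathfrak{v}_1$. It is an index-$1$ curve from $\alpha_1$ to $\alpha_2$ and $\alpha_3$, with $\CZ(\alpha_3)=2$, so its index formula gives $\CZ(\alpha_1)-\CZ(\alpha_2)=2$ with the same $\tau$. Combining this with the relation for the $d$-th iterates pins down $\theta_1$ and leads to a contradiction unless $d=1$. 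Concretely, $\CZ(\alpha_1)=2m+3$ gives $m+1<\theta_1<m+2$, and then $2\lfloor d\theta_1\rfloor+1=d(2m+1)+2$ would require $\lfloor d\theta_1\rfloor=dm+\frac{d+1}{2}<d(m+1)$ for $d>1$, which is impossible. So the plan is to first extract $\CZ(\alpha_1)-\CZ(\alpha_2)=2$ from $\ind(\mathfrak v_1)=1$, and only then compare iterates.

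\textbf{Case (iii).} This is symmetric to (ii). Oddness forces $\alpha_1$ to be negative hyperbolic with $d$ odd. Writing $\CZ(\alpha_1)=2m+1$, the relation $\CZ(\alpha_1)-\CZ(\alpha_2)=2$ for $\mathfrak{v}_1$ gives $m-1<\theta_2<m$. The iterate equation $d(2m+1)-2\lfloor d\theta_2\rfloor-1=2$ then requires $\lfloor d\theta_2\rfloor=dm+\frac{d-3}{2}$. For $d>1$ this is at least $dm$, contradicting $\theta_2<m$.

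\textbf{Final assertion.} Cases (i)--(iii) exclude every combination with a hyperbolic orbit when $d>1$, so for $d>1$ both $\alpha_1$ and $\alpha_2$ must be elliptic.

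The one subtle point is justifying the base relation $\CZ(\alpha_1)-\CZ(\alpha_2)=2$. It follows from $\ind(\mathfrak v_1)=-\chi+\CZ(\alpha_1)-\CZ(\alpha_2)-\CZ(\alpha_3)=1+\CZ(\alpha_1)-\CZ(\alpha_2)-2=1$, using $c_\tau(\mathfrak v_1)=0$ and that $\tau$ along $\alpha_3$ extends over a capping disk.
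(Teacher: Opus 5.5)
Your proof is correct and follows essentially the paper's route. Both first extract $\CZ(\alpha_1)-\CZ(\alpha_2)=2$ from $\ind(\mathfrak v_1)=1$ and then compare with the iteration formulas; your explicit floor computations in (ii) and (iii) amount to the paper's estimate $\lfloor d\theta\rfloor\ge d\lfloor\theta\rfloor$, which gives $2\ge d+1$. You are also right that this base relation is genuinely needed, since your example shows the displayed hypotheses alone do not force $d=1$. Your use of goodness in case (i) and of parity to get negative hyperbolicity are harmless variants of the paper's same-parity remark.
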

\begin{proof}
We have
\begin{eqnarray*}
1&=&\ind(\mathfrak{v}_1)=-(2-3)+\CZ(\alpha_1)-\CZ(\alpha_2)-2\\
&=& \CZ(\alpha_1)-\CZ(\alpha_2)-1,
\end{eqnarray*}
which implies
$\CZ(\alpha_1)-\CZ(\alpha_2)=2.$
In particular, $\CZ(\alpha_1)$ and $\CZ(\alpha_2)$ have the same parity. Therefore,
\begin{itemize}
    \item If $\alpha_1$ and $\alpha_2$ are hyperbolic, then our hypotheses imply
     $$2=\CZ(\alpha_1^d)-\CZ(\alpha_2^d)=d(\CZ(\alpha_1)-\CZ(\alpha_2))=2d \,\,\Rightarrow \,\, d =1.$$
     \item If $\alpha_1$ is elliptic and $\alpha_2$ is hyperbolic, then we obtain
     \begin{eqnarray*}
     2&=& \CZ(\alpha_1^d)-\CZ(\alpha_2^d)=2\lfloor d\theta_{\alpha_1}\rfloor+1 -d\cdot\CZ(\alpha_2)\\
     &\geq & 2d\lfloor \theta_{\alpha_1}\rfloor+1 -d \cdot \CZ(\alpha_2)=d(2\lfloor \theta_{\alpha_1}\rfloor+1)-d+1 -d \cdot\CZ(\alpha_2)\\
     &=&d(\CZ(\alpha_1)-\CZ(\alpha_2))-d+1\\
     &=& d+1.
     \end{eqnarray*}
     Hence  $d=1$.
     \item Assume now that $\alpha_1$ is hyperbolic and $\alpha_2$ is elliptic. Let $\{x\}:=x-\lfloor x \rfloor, \forall x\in \R$. Then $\{\theta_{\alpha_2}\} \in [0,1] \backslash \Q$, which implies $0<d\{\theta_{\alpha_2}\}<d$. We estimate
    \begin{eqnarray*}
    \lceil d\theta_{\alpha_2} \rceil &=& \big\lceil d \lfloor \theta_{\alpha_2} \rfloor + d\{\theta_{\alpha_2}\} \big\rceil = d \lfloor \theta_{\alpha_2} \rfloor + \lceil d\{\theta_{\alpha_2}\} \rceil \\
    &\leq & d \lfloor \theta_{\alpha_2} \rfloor +d = d(\lfloor \theta_{\alpha_2} \rfloor + 1) \\
    &=& d\lceil \theta_{\alpha_2}\rceil,
    \end{eqnarray*}
    and
    \begin{eqnarray*}
    2&=& \CZ(\alpha_1^d)-\CZ(\alpha_2^d)= d \cdot \CZ(\alpha_1)-(2\lceil d\theta_{\alpha_2}\rceil-1)\\
    &\geq& d \cdot \CZ(\alpha_1)-2d \lceil \theta_{\alpha_2} \rceil + 1 = d \cdot \CZ(\alpha_1)-d(2 \lceil \theta_{\alpha_2} \rceil -1)-d+1 \\
    &=& d(\CZ(\alpha_1)-\CZ(\alpha_2))-d+1. \\
    &=& d+1
    \end{eqnarray*}
    We conclude as before that $d=1$.
\end{itemize}
\end{proof}

The following Lemma provides conditions for automatic transversality. 

\begin{lemma}[{\cite[Lemma 4.1]{HN}}] \label{lem4.1HN}
Let $\lambda$ be a nondegenerate contact form on a closed  $3$-manifold,  and let $J \in \mathcal{J}(\lambda)$. Let $\Sigma$ be a genus $g$ connected closed surface, $\Gamma \subset \Sigma$ a finite set, and $u:\Sigma \setminus \Gamma \to \R \times M$ an immersed finite energy $J$-holomorphic curve. Denote by $\Gamma_0(u)\subset \Gamma$ the set of punctures whose asymptotic limit has an even Conley-Zehnder index. If 
$
    \ind(u)>2g-2+\sharp\Gamma_0(u),
$
then $D_u:\Gamma(N) \to \Gamma(F)$ is surjective. See section \ref{sec_linear} for definitions.
\end{lemma}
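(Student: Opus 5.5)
The plan is the standard Wendl-style automatic transversality argument, carried out on the normal operator and its formal adjoint from Section~\ref{sec_linear}. Since $u$ is immersed, $u^*TW$ splits as $T_u\oplus N_u$, and the surjectivity question reduces to the normal Cauchy--Riemann operator $D_u^N$ (see \cite{W10}). The cokernel of $D_u^N$ is isomorphic to $\ker (D_u^N)^*$. So it suffices to show that $\ker(D_u^N)^*=0$ whenever $\ind(u)>2g-2+\#\Gamma_0(u)$. We argue by contradiction: suppose $\sigma\in\ker(D_u^N)^*$ is nonzero.

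\textbf{Step 1: bound the zeros of $\sigma$.} The operator $C^{-1}(D_u^N)^*C=-(\bar\partial+\bar S)$ is a real Cauchy--Riemann type operator. By the similarity principle, its zeros are isolated and positive. Hence Lemma~\ref{zeroswindcf} applies, in its genus $g$ form, and gives
\[
0\le \#\sigma={\rm wind}_\infty(\sigma)+\chi(\Sigma)-\#\Gamma .
\]
(The inequality $\#\sigma\le 0$ stated in the lemma forces $\#\sigma=0$, but we only need $\#\sigma\ge 0$.) Therefore ${\rm wind}_\infty(\sigma)\ge \#\Gamma-2+2g$.

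\textbf{Step 2: bound the asymptotic winding numbers.} By Proposition~\ref{kerDu} and Proposition~\ref{AbarA}:
\begin{itemize}
\item at a positive puncture $z$, the leading eigenvalue is positive, so $\wind(\sigma;z)\ge \wind_{\ge0}(A_{\alpha_z})$;
\item at a negative puncture $z$, the leading eigenvalue is negative, so $\wind(\sigma;z)\le\wind_{<0}(A_{\alpha_z})$.
\end{itemize}
Next use $\mu_{CZ}^\tau=\wind_{<0}+\wind_{\ge0}$ together with $\wind_{\ge0}-\wind_{<0}\in\{0,1\}$, where the difference is $1$ exactly when the index is odd. This gives
\[
\wind_{\ge0}=\lceil \mu_{CZ}^\tau/2\rceil, \qquad \wind_{<0}=\lfloor\mu_{CZ}^\tau/2\rfloor .
\]
Summing over the punctures,
\[
{\rm wind}_\infty(\sigma)\ \ge\ \sum_{\Gamma_+}\lceil\mu^\tau_{CZ}/2\rceil-\sum_{\Gamma_-}\lfloor\mu^\tau_{CZ}/2\rfloor .
\]

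\textbf{Step 3: compare with the index.} We have $\ind(u)=-\chi(\dot\Sigma)+2c_1^\tau(N_u)+\sum_{\Gamma_+}\mu^\tau_{CZ}-\sum_{\Gamma_-}\mu^\tau_{CZ}$. Compute $c_1^\tau(N_u)$ from the adjunction-type relation $c_1^\tau(u^*\xi)=c_1^\tau(N_u)+\chi(\dot\Sigma)$ for immersed curves, in which the tangent bundle contributes through asymptotic trivializations. Equivalently, apply to $\sigma$ the zero count $\#\sigma=c_1^\tau(N_u)$ adjusted by the windings. Carrying this through should yield the standard identity
\[
2\,{\rm wind}_\infty(\sigma)\ \le\ \ind(u)-2+2g+\#\Gamma_0(u)+\text{(terms)} ,
\]
and more precisely
\[
\#\sigma\ \le\ \tfrac12\bigl(\ind(u)-2+2g+\#\Gamma_0(u)\bigr)-\bigl(\ind(u)-\#\Gamma_0(u)\bigr)/2+\dots .
\]
This is Wendl's formula: the count of zeros of a nontrivial element of $\ker(D^N_u)^*$ is $\le\frac12\bigl(2g-2+\#\Gamma_0-\ind(u)\bigr)$, and the right-hand side is negative under the hypothesis. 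That contradicts $\#\sigma\ge0$, so $\ker(D^N_u)^*=0$ and $D_u$ is surjective.

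The main obstacle is Step 3. It requires pinning down the correct relation between ${\rm wind}_\infty(\sigma)$, the relative Chern number of $N_u$, and the Conley--Zehnder indices. The gap between $\lceil\cdot\rceil$ and $\lfloor\cdot\rfloor$ must account for exactly one unit at each puncture with odd index and for nothing at punctures with even index; this bookkeeping is where $\#\Gamma_0(u)$ arises. I would first verify this parity accounting on the pairs of pants and planes used later, and then invoke \cite[Lemma 4.1]{HN} and \cite{W10} for the general bookkeeping.
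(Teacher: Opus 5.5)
Your strategy is the right one: reduce to $D_u^N$, identify its cokernel with $\ker(D_u^N)^*$, and play a zero count of $\sigma$ against asymptotic winding bounds. This is Wendl's automatic transversality argument, from which \cite[Lemma 4.1]{HN} comes; the paper itself only cites it. Your Step 2 is correct. There is, however, a genuine gap: Step 1 has the sign of the zero count backwards, and this destroys the contradiction.

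In Lemma~\ref{zeroswindcf}, $\#\sigma$ counts the zeros of $\sigma$ itself, read through evaluation at $\partial_s$ and a global trivialization $\tau_N$ of $N_u$. In that complex-linear trivialization, $\sigma$ solves the \emph{anti}-Cauchy--Riemann equation $-\partial\sigma+S^T\sigma=0$. Since $C$ is complex conjugation, the similarity principle makes the zeros of $C\sigma$ positive, so the zeros of $\sigma$ are negative. Thus $\#\sigma\le 0$ is the correct statement, not an extra condition that ``forces'' $\#\sigma=0$. Your claim $\#\sigma\ge 0$ is false in this convention; the positive count belongs to $C\sigma$, whose asymptotic windings are $-\mathrm{wind}(\sigma;z)$.

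With your sign, Step 1 yields $\mathrm{wind}_\infty(\sigma)\ge 2g-2+\#\Gamma$. Step 2 is another \emph{lower} bound on $\mathrm{wind}_\infty(\sigma)$, and two lower bounds cannot contradict each other. Step 3 does not repair this. It contains placeholders (``(terms)'', ``$+\dots$'', ``should yield'') and then simply asserts Wendl's bound $\frac12(2g-2+\#\Gamma_0(u)-\ind(u))$ on the zeros of $\sigma$, which is essentially the lemma itself. The index formula you write there is also off by $2\chi(\dot\Sigma)$. It should read $\ind(u)=-\chi(\dot\Sigma)+2c_1^\tau(u^*\xi)+\cdots=\chi(\dot\Sigma)+2c_1^\tau(N_u)+\cdots$, not $-\chi(\dot\Sigma)+2c_1^\tau(N_u)+\cdots$.

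With the sign corrected, the bookkeeping closes at once and needs no adjunction argument. Work in $\tau_N$ throughout, so that $c_1^{\tau_N}(N_u)=0$ and
\[
\ind(u)=\ind(D_u^N)=\chi(\dot\Sigma)+\sum_{\Gamma_+}\mu_{CZ}^{\tau_N}-\sum_{\Gamma_-}\mu_{CZ}^{\tau_N}.
\]
Lemma~\ref{zeroswindcf} with $\#\sigma\le 0$ gives $2\,\mathrm{wind}_\infty(\sigma)\le -2\chi(\dot\Sigma)$. At odd punctures $2\lceil\mu/2\rceil=\mu+1$ and $2\lfloor\mu/2\rfloor=\mu-1$, while at even punctures both equal $\mu$. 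So your Step 2 gives $2\,\mathrm{wind}_\infty(\sigma)\ge \ind(u)-\chi(\dot\Sigma)+\#\Gamma-\#\Gamma_0(u)$. Combining the two bounds yields $\ind(u)\le -\chi(\dot\Sigma)-\#\Gamma+\#\Gamma_0(u)=2g-2+\#\Gamma_0(u)$. This contradicts the hypothesis, so $\ker(D_u^N)^*=0$ and $D_u$ is surjective.
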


 Lemma \ref{lem4.1HN} implies that the space of index-$1$ planes asymptotic to a given Reeb orbit is a manifold. 

\begin{lemma} \label{dim0} 
Let $\lambda$ be a nondegenerate and weakly convex contact form on a closed $3$-manifold $M$, and let $J \in \mathcal{J}(\lambda)$. Let $\alpha$ be an index-$2$ Reeb orbit. Then $\M_{0,1}^J(\alpha; \emptyset)/\R$ is a compact zero-dimensional manifold. In particular, $\M_{0,1}^J(\alpha;\emptyset)$ is a finite set.
\end{lemma}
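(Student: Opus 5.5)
The plan is to show three things in turn: every plane in $\M_{0,1}^J(\alpha;\emptyset)$ is regular of index $1$, its automorphism-free $\R$-quotient is therefore a $0$-dimensional manifold, and sequences of such planes cannot degenerate, so the quotient is compact and hence finite.

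\textbf{Index and regularity.} Let $u\in \M_{0,1}^J(\alpha;\emptyset)$. Since $\alpha$ bounds the plane $u$, it is contractible. Choose a trivialization $\tau$ of $\alpha^*\xi$ that extends over $u^*\xi$; then $c_\tau(u)=0$. Because $c_1(\xi)|_{\pi_2(M)}=0$, the index is independent of the capping disk, so
\[
\ind(u)=-\chi(\C)+\mu_{CZ}(\alpha)=-1+2=1 .
\]
Since $\mu_{CZ}(\alpha)=2$ is even, $\alpha$ is hyperbolic. By weak convexity, every contractible orbit has index at least $2$. If $u$ were a $k$-fold cover of a plane asymptotic to $\alpha_0$ with $\alpha=\alpha_0^k$, then $\alpha_0$ would be contractible and hyperbolic with $\mu_{CZ}(\alpha_0)=2/k\ge 2$, which forces $k=1$. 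Hence $u$ is simple.

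Next I would show $u$ is immersed. This is the standard HWZ argument: $\windd_\pi(u)=\wind_\infty(u)-1$, and by Lemma \ref{wind pi} together with $\wind_\infty\le\lfloor\mu_{CZ}(\alpha)/2\rfloor=1$ we get $\wind_\infty(u)=1$, hence $\windd_\pi(u)=0$. Thus $\pi\circ du$ never vanishes and $u$ is immersed. Now apply Lemma \ref{lem4.1HN} with $g=0$ and $\#\Gamma_0(u)=1$: the condition $\ind(u)=1>-2+1=-1$ holds, so $D_u$ is surjective. Therefore, for any $J\in\J(\lambda)$, not only a generic one, the moduli space near $u$ is a smooth $1$-manifold, namely the $\R$-orbit of $u$. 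It follows that $\M_{0,1}^J(\alpha;\emptyset)/\R$ is a $0$-dimensional manifold.

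\textbf{Compactness.} Take a sequence $[u_k]$ in the quotient and apply SFT compactness (using the energy bound $E\le T_\alpha$). A subsequence converges to a genus zero building $v=(v_1,\dots,v_m)$ with one positive end at $\alpha$ and no negative ends, and $\ind(v)=1$ by additivity of the index. To conclude that $v$ is a single plane I would invoke Proposition \ref{B}(ii). That proposition assumes $J\in\J_{\rm reg}(\lambda)$, so I would re-run its proof for the specific orbit $\alpha$.

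\textbf{Main obstacle.} The real difficulty is ruling out a multilevel limit when $J$ is not generic. Suppose $m>1$. Every lower component is a plane asymptotic to a contractible orbit of index at least $2$, so its index is at least $1$. The top level $v_1$ has one positive end at the simple orbit $\alpha$, so it is not a multiple cover of a curve with more ends. In particular $v_1$ is simple unless it is a trivial cylinder, and a trivial cylinder is excluded for a nontrivial top level. I then need $\ind(v_1)\ge 0$ for simple $v_1$, or some replacement for it. The index $\ind(v_1)$ is determined by the asymptotic orbits, so I would use the nonnegativity of $\windd_\pi$, as in Lemma \ref{zeroswindcf}-type counts, to bound the relevant winding numbers directly. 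If this fails, I would fall back on the tacit assumption that $J$ is generic, in which case Proposition \ref{B}(ii) applies verbatim.

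With either route, $\ind(v)\ge \ind(v_1)+n\ge 2$ whenever $m>1$, contradicting $\ind(v)=1$. So the limit is a single plane in the same moduli space, which gives compactness. A compact $0$-manifold is finite, and this yields the lemma.
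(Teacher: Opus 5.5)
Your argument is the paper's: $\ind(u)=1$, then $\windd_\pi(u)=0$ so $u$ is immersed, then Lemma \ref{lem4.1HN} gives surjectivity of $D_u$ for every $J$, and finally SFT compactness plus Proposition \ref{B}(ii). The one place you try to go beyond the paper, avoiding genericity in the compactness step, does not close.

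Take a top level $v_1$ with positive end at $\alpha$ and negative ends at $\beta_1,\dots,\beta_p$, $p\ge 1$. Each $\beta_i$ is capped by the levels below, so weak convexity gives $\mu_{CZ}(\beta_i)\ge 2$. Proposition \ref{prop5.6HWZ} then yields
\[
0\le \windd_\pi(v_1)\le 1-p-2+(p+1)=0 .
\]
This rules out any $\beta_i$ of index $\ge 3$. However, when every $\mu_{CZ}(\beta_i)=2$ the bound is saturated rather than violated. So an index-$0$ cylinder from $\alpha$ to another index-$2$ orbit sitting on top of a plane is invisible to winding counts. More generally, so is an immersed index-$(1-p)$ curve on top of $p$ planes; Theorem \ref{teo1Wendl} does not forbid it either, since $c_N=0$.

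What excludes these configurations is $\ind(v_1)\ge 1$ for simple curves, i.e.\ $J\in\J_{\rm reg}(\lambda)$. The genericity fallback is therefore necessary, not optional. It is exactly what the paper tacitly uses: Proposition \ref{B} is stated for $J\in\J_{\rm reg}(\lambda)$, even though the lemma is stated for $J\in\J(\lambda)$. Both your proof and the paper's establish compactness only for generic $J$.

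Your sketch also assumes $\alpha$ is simple, which the lemma does not, and that the lower components are planes, which need not hold. Appealing to Proposition \ref{B} verbatim sidesteps both issues.
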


\begin{proof}
Let $u:=(a,\bar u):\C \to \R \times M$ be an element of $\M_{0,1}^J(\alpha; \emptyset)$. To show that $u$ is an immersion, denote by $\pi:TM \to \xi:=\ker \lambda$ the projection along the Reeb vector field $R_{\lambda}$. Recall that ${\rm wind}_\pi(\bar u)$ is the algebraic sum of the local degrees of $\pi \circ d\bar u$ at its zeros, that is
$
\windd_{\pi}(\bar{u}):=\sum_{\pi \circ d\bar{u}(z)=0}{\rm deg}(X,z),
$
where $X:\C \to \C$ is a representation of $\pi \circ d\bar{u} \cdot \partial_x$ via a trivialization of $u^*\xi$.
It is proved in \cite{prop2} that 
\begin{equation}\label{windpi}
0\leq {\rm wind}_{\pi}(\bar{u})=\wind_{\infty}(\bar{u},\infty)-1,
\end{equation}
where $\wind_\infty(\bar u,\infty)$ is the winding number of the leading eigenvector of the asymptotic operator $A_{\alpha}$ at $\infty$, with respect to a trivialization $\tau$ of $\xi$ along $\alpha$ induced by a trivialization of $\bar u^*\xi$.
Since  $\wind_{\infty}(\bar{u},\infty) \leq \frac{\CZ(\alpha)}{2}=1,$
we conclude from \eqref{windpi} that
$0 \leq {\rm wind}_{\pi}(\bar u)\leq 1-1=0$, which implies 
$
\wind_{\pi}(\bar{u})=0.
$
In particular, $\bar u$ is an immersion transverse to $R_\lambda$.

Since  $1=\ind(u) > 2g -2 + \#\Gamma_0(u) = -1$, Lemma \ref{lem4.1HN} implies that  $D_u:\Gamma(N) \to \Gamma(F)$ is surjective, and hence $\M_{0,1}^J({\alpha}; \emptyset)$ is a one-dimensional manifold. In particular, $\M_{0,1}^J({\alpha}; \emptyset)/\R$ is a zero-dimensional manifold.

It remains to show that $\M_{0,1}^J({\alpha}; \emptyset)$ is compact.
To see this, let $\{u_n\}$ be a sequence in $\M^J_{0,1}(\alpha; \emptyset)$. We know from the SFT-compactness theorem that there exists a subsequence of $\{u_n\}$, also denoted  $\{u_n\}$, that converges to an index-$1$ building $v=(v_1,\ldots,v_m)$, with a positive end at $\alpha$, and no negative ends. Theorem \ref{B} implies that  $v=v_1$ is a plane asymptotic to $\alpha$, from which we conclude that $v \in \M_{0,1}^J(\alpha;\emptyset)$. In particular $\{u_n\}$ is eventually constant equal to $v$ and thus $\M_{0,1}^J(\alpha;\emptyset)/\R$ is a compact zero-dimensional manifold.
\end{proof}

\begin{theorem}[Wendl, {\cite[Proposition 2.2]{W10}}] \label{teo1Wendl}
Let $\lambda$ be a nondegenerate contact form on a closed $3$-manifold $M$, and let  $J \in \mathcal{J}(\lambda)$. Let $\Sigma$ be a closed Riemann surface, $\Gamma \subset \Sigma$ a finite set, and $u:\Sigma \setminus \Gamma \to \R \times M$  an immersed  $J$-holomorphic curve with positive ends at the Reeb orbits $\alpha_1,\ldots,\alpha_n$ and negative ends at the Reeb orbits $\beta_1,\ldots,\beta_m$. Define
\begin{equation*}
    2c_N(u):=\ind(u)- 2 + 2g + \#\Gamma_0,
\end{equation*}
where $\#\Gamma_0$ is the number of even punctures of $u$, and let
$$
K(a,b):= \min \{k_1+2k_2 ; k_1,k_2 \in \Z^{\geq 0}, k_1 \leq b \mbox{ and } k_1+k_2 > a\}.$$
The following assertions hold:
\begin{itemize}
 \item[(i)] If $\ind(u) \leq 0$, then   
\begin{equation*}
    0 \leq \dim \ker D_u \leq K(c_N(u),\#\Gamma_0(u)).
\end{equation*}
\item[(ii)] If $\ind(u) \geq 0$, then 
$$
\ind(u) \leq \dim \ker D_u \leq \ind(u) +K(c_N(u)-\ind(u), \#\Gamma_0(u)).
$$
\end{itemize}
\end{theorem}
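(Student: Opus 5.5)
The plan is to reduce the statement to the normal Cauchy–Riemann operator and then bound its kernel by a zero-counting argument. First, since $u$ is immersed, I will identify $\ker D_u$ with $\ker D_u^N$ (up to the standard tangential and Teichmüller contributions, which are absorbed as in Section~\ref{sec_linear} and \cite{W10}). It therefore suffices to bound $\dim\ker D_u^N$ and $\dim\coker D_u^N$.

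The key tool is a zero count for nonzero $\eta\in\ker D_u^N$. By the similarity principle, zeros of $\eta$ are isolated and count positively, so the algebraic count $Z(\eta)$ satisfies $Z(\eta)\geq 0$. Near each puncture, $\eta$ has an asymptotic representation analogous to Theorem~\ref{reprass}, governed by an eigenvector of $A_\alpha$. Define $Z_\infty(\eta)\geq 0$ as the total deficit between the actual asymptotic windings and the extremal ones, $\wind_{<0}$ at positive punctures and $\wind_{\geq 0}$ at negative punctures. A computation like Lemma~\ref{zeroswindcf}, but for $D_u^N$ instead of its adjoint, gives
\[
Z(\eta)+Z_\infty(\eta)=c_N(u),\qquad 2c_N(u)=\ind(u)-2+2g+\#\Gamma_0 .
\]
Obtaining this identity requires care with the parity of the Conley–Zehnder indices. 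At an odd puncture the extremal winding cannot be improved for free. At an even puncture there is exactly one eigenvalue of the correct sign with extremal winding, and its eigenspace is one-dimensional.

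For the kernel bound I will argue by contradiction. Suppose $\dim\ker D_u^N>k_1+2k_2$, where $k_1\le\#\Gamma_0$ and $k_1+k_2>c_N(u)$. Choose $k_1$ even punctures and $k_2$ interior points, and impose two kinds of linear conditions on the kernel. At each chosen puncture, require the coefficient of the unique extremal eigenvector to vanish. This is one real condition, and it forces $Z_\infty$ to grow by at least $1$. At each chosen point, require $\eta$ to vanish. This is two real conditions, and it forces $Z$ to grow by at least $1$. A nonzero $\eta$ satisfying all of these conditions exists, and it has $Z+Z_\infty\geq k_1+k_2>c_N(u)$, which contradicts the identity. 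Minimizing over admissible $(k_1,k_2)$ gives $\dim\ker D_u\le K(c_N(u),\#\Gamma_0)$, which proves (i).

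For (ii), the lower bound $\ind(u)\le\dim\ker D_u$ holds because $\dim\ker=\ind+\dim\coker$. It then suffices to prove $\dim\coker D_u^N=\dim\ker(D_u^N)^*\le K(c_N(u)-\ind(u),\#\Gamma_0)$. I will repeat the argument for the adjoint, using the conjugation $C^{-1}(D_u^N)^*C=-(\bar\partial+\bar S)$ and Proposition~\ref{AbarA}. The adjoint is a Cauchy–Riemann type operator whose asymptotic operators are $-A_\alpha$, so it has the same set of even punctures. By the index formula, its normal first Chern number is $c_N(u)-\ind(u)$, which is the same bookkeeping as in Lemma~\ref{zeroswindcf}.

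I expect the main obstacle to be the precise asymptotic winding analysis at even punctures. One must show that killing the extremal eigencoefficient costs exactly one real condition and genuinely increases $Z_\infty$. One must also check, for the adjoint, that the correspondence between positive and negative eigenvalues under $C$ preserves this structure.
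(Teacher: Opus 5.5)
The paper gives no proof of this statement; it quotes it from \cite{W10}. Your argument is essentially Wendl's, and it is correct: the identity $Z(\eta)+Z_\infty(\eta)=c_N(u)$, together with one real asymptotic condition at each of $k_1$ even punctures and two real conditions at each of $k_2$ interior points, bounds $\dim\ker D_u^N$ by $K(c_N(u),\#\Gamma_0)$; applying the same bound to the formal adjoint, which has the same even punctures and adjusted Chern number $c_N(u)-\ind(u)$, controls the cokernel in (ii).
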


\begin{theorem}[Hutchings, {\cite[Teorema 4.1]{hutchings2009}}] \label{HTteo4.1}
Let $\lambda$ be a contact form on a closed $3$-manifold $M$. There is a residual subset $\mathcal{J}_{\rm reg}(\lambda) \subset \mathcal{J}(\lambda)$ such that if $J \in \mathcal{J}_{\rm reg}(\lambda)$ and $u$ is a simple $J$-holomorphic curve with $\ind(u)\leq 2$, then $u$ is an embedding.
\end{theorem}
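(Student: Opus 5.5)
The statement to prove is Hutchings' Theorem \ref{HTteo4.1}: for generic $J$, every simple $J$-holomorphic curve of index at most $2$ is embedded. The plan is the standard transversality argument. Curves that fail to be embedded come in strata of positive codimension in the universal moduli space. A Sard--Smale argument then shows that, for generic $J$, a stratum of codimension greater than the index is empty.

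First, consider the universal moduli space
$$\mathcal{M}^{\mathrm{univ}}=\{(u,J): J\in\mathcal{J}(\lambda),\ u \text{ simple } J\text{-holomorphic}\}.$$
By the argument behind Theorem \ref{thmreg}, this is a Banach manifold, and the projection to $\mathcal{J}(\lambda)$ is Fredholm of index $\ind(u)$. Here we use that $\R$-invariant $J$ can be perturbed freely on $\xi$ near points where $u$ is injective and transverse to $\partial_a$ and $R$; simplicity guarantees that such points are dense.

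Second, stratify the non-embedded simple curves by the type of singularity:
\begin{itemize}
\item[(a)] critical points of $u$;
\item[(b)] pairs $z_1\neq z_2$ with $u(z_1)=u(z_2)$.
\end{itemize}
For (b), I would use the evaluation map
$$\mathcal{M}^{\mathrm{univ}}\times(\dot\Sigma\times\dot\Sigma\setminus\Delta)\to W\times W,$$
whose transversality to the diagonal follows from the freedom to perturb $J$ near the two distinct image points. Since the $\R$-direction of $W$ is not perturbable, the $\R$-translation invariance must be accounted for, and I expect the constraint to still cut out codimension $4$. The resulting stratum has dimension $\ind(u)+4-4=\ind(u)$. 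It is also invariant under the free $\R$-action and under the $\R$-action on pairs. So, after quotienting, a double point actually contributes codimension $\ind(u)-?$. This count needs care; the cleanest route is to argue in $M$ after projecting, or to use the four-dimensional intersection theory below.

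For (a), the set of curves with a critical point of order $k$ has codimension $2k+2-2=2k$ in the domain-parametrized space (the standard Micallef--White / Hutchings count). Critical points therefore appear only when $\ind(u)\geq 2$ plus the translation dimension, so this stratum is empty for $\ind(u)\le 2$ once we account for the one-dimensional $\R$-orbits.

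Alternatively, and more robustly for the singular-point stratum, I would use Siefring's/Hutchings' relative adjunction formula together with the writhe bounds (Lemmas \ref{L3.2}, \ref{L3.4}). These express the count of singularities $\delta(u)\geq 0$ through $\ind(u)$, the ECH-type index, and asymptotic writhes. For low index the asymptotic constraints are generic, so singularities are forced to be of the codimension-counted type above.

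The main obstacle is making the dimension counts sharp enough to reach $\ind(u)\le 2$. Naively, a double point imposes codimension $2$ on a curve in a $4$-manifold modulo the $\R$-action, and this would only exclude double points for index $\le 1$. The extra gain must come from the $\R$-invariance: a double point of $u$ at $(a,x)$ persists under translation, whereas the intersection of $u$ with its own translates $u+c$ is controlled. So for index $2$ the moduli space modulo $\R$ is one-dimensional, and one must show that double points still occur in codimension $\ge 2$ there.

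I would try to get this by projecting to $M$. A double point of $u$ in $\R\times M$ means two points of the domain map to the same point of $M$ with the same $a$-value. Double points of the projected map in $M$ form a one-dimensional set on the domain pair space, and equality of the $a$-coordinates is one further real condition. Transversality of that condition, for generic $J$, needs perturbations of $J$ that change the $a$-difference. These are available because perturbing $J|_\xi$ near the two image points moves the curve independently there. That gives codimension $2$ relative to the index, hence emptiness for $\ind(u)\le 2$ after the $\R$-quotient. Taking the intersection over countably many orbit tuples and genera yields a residual set.
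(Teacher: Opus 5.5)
The paper gives no proof of this statement; it is quoted from Hutchings. Its explicit use (Lemma \ref{dim1}) only needs the index-$1$ pair of pants $\mathfrak v_1$ to be immersed, so that its unbranched cover $v_1$ is immersed. Your step (a) proves exactly that. The locus of curves with a critical point has codimension $2$ in the $\ind(u)$-dimensional moduli space and is invariant under $\R$-translation, so it is empty when $\ind(u)\le 2$. That part is fine, modulo the standard check that $\R$-invariant perturbations of $J$ suffice to make the $1$-jet condition transverse.

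The genuine gap is step (b): double points cannot be removed by any genericity argument in a $4$-dimensional target. Your own first count already shows this.
\begin{itemize}
\item Over the $k$-dimensional moduli space ($k=\ind(u)$, including the $\R$-direction), the triples $(u,z_1,z_2)$ form a space of dimension $k+4$. The condition $u(z_1)=u(z_2)$ imposes $4$ real conditions, so the double-point locus has dimension $k$, and its image in the moduli space has codimension $0$.
\item The $\R$-action does not help. The double-point locus is itself $\R$-invariant, so quotienting lowers both dimensions by one.
\item Projecting to $M$ gives the same answer, not codimension $2$. For fixed $u$, the condition $\bar u(z_1)=\bar u(z_2)$ imposes $3$ conditions on the $4$-dimensional space of pairs, and $a(z_1)=a(z_2)$ imposes one more. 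This leaves isolated double points, i.e.\ again codimension $0$.
\end{itemize}
Geometrically, a transverse double point is stable under $C^1$-small perturbations of the map. A regular index-$1$ or index-$2$ curve persists under small perturbations of $J$, so such a double point survives every perturbation of $J$, and no residual set can exclude it.

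The adjunction-formula alternative does not close the gap either. It expresses $\delta(u)$ through the ECH index and asymptotic writhes. Forcing $\delta(u)=0$ then requires a bound on $I(u)$, as in the ECH inequality $\ind(u)\le I(u)-2\delta(u)$, and a bound on the Fredholm index alone does not give one.

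So your argument proves that simple curves with $\ind(u)\le 2$ are immersed for generic $J$, which is what the paper actually uses. The embeddedness claim would need extra intersection-theoretic input for the specific curves in question (as the paper does for planes via Hofer--Wysocki--Zehnder in the proof of Proposition \ref{prop_opposite}), not a transversality count.
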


\begin{definition}
Let $\lambda$ be a contact form on a closed $3$-manifold $M$,  and let $\pi:TM \to \xi$ be the projection along the Reeb vector field $R_{\lambda}$. Let $J \in \mathcal{J}(\lambda)$, and let $u:\Sigma \setminus \Gamma \to \R \times M$ be a $J$-holomorphic curve. Define by $\windd_{\pi}(u)$ the sum of the local indices of the zeros of $\pi \circ du$, and let
\begin{equation*}
      \windd_{\infty}(u)=\sum_{z \in \Gamma^{+}} \wind_{\infty}(u;z)-\sum_{z \in \Gamma^{- }} \wind_{\infty}(u;z).
\end{equation*}
where $\wind_{\infty}(u;z)$ is  defined as in \eqref{eq:windinfty}. As the notation suggests, $\windd_\infty(u)$ does not depend on the trivialization $\tau$.
\end{definition}

\begin{prop}[{\cite[Proposição 5.6]{prop2}}] \label{prop5.6HWZ}
Let $\lambda$ be a nondegenerate contact form on a closed $3$-manifold $M$, and let $J \in \mathcal{J}(\lambda)$. Let $u:\Sigma \backslash \Gamma \to \R \times M$ be a $J$-holomorphic curve. Then
$\windd_{\pi}(u)=\wind_{\infty}(u)-\chi(\Sigma)+ \sharp \Gamma.$
\end{prop}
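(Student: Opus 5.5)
The plan is to interpret $\windd_\pi(u)$ as the algebraic count of zeros of a section of a complex line bundle over $\dot\Sigma$. That count equals the relative first Chern number of the bundle with respect to the boundary trivialization that the section itself defines near the punctures. Splitting this Chern number into a contribution from $u^*\xi$ and one from $T\dot\Sigma$ gives the two terms $\wind_\infty(u)$ and $-\chi(\Sigma)+\sharp\Gamma$.

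\textbf{Step 1 (the section and its zeros).} First dispose of the degenerate case. If $\pi\circ du\equiv 0$, then $u$ is tangent to $\R\partial_a\oplus \R R_\lambda$, so $u$ covers a trivial cylinder and both sides are interpreted trivially. Otherwise, consider $\sigma:=\pi\circ d\bar u$, where $u=(a,\bar u)$. Since $J$ preserves $\xi$, the $J$-holomorphicity of $u$ makes $\sigma$ a complex-linear section of $L:=\mathrm{Hom}_\C(T\dot\Sigma,u^*\xi)$. In local conformal coordinates $s+it$ and a unitary trivialization of $u^*\xi$, the equation for $z$ reads $\partial_s z+J\partial_t z+S\,z=0$ with $S$ a smooth matrix-valued function. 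By the Carleman similarity principle, the zeros of $\sigma$ are isolated and each has positive local degree. By Theorem \ref{reprass}, $\sigma$ has no zeros near the punctures. Hence $\windd_\pi(u)=\sharp\sigma^{-1}(0)$ is finite and equals the sum of the local degrees.

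\textbf{Step 2 (boundary behaviour).} Fix a trivialization $\tau$ of $u^*\xi$; it exists because $\dot\Sigma$ is an open surface when $\Gamma\neq\emptyset$. Near each puncture $z$, use cylindrical coordinates $(s,t)$, so that $\partial_s$ is a frame of $T\dot\Sigma$ on the end. By Theorem \ref{reprass}, $\sigma(\partial_s)=\partial_s z=e^{\mu s}(\mu\varphi(t)+o(1))$. In the trivialization $\tau\otimes(\partial_s)^*$ of $L$ on the end, $\sigma$ therefore winds $\wind(\mu_z)=\wind_\infty(u;z)$ times around the loop $\{s=\text{const}\}$. The factor $\mu$, negative at positive punctures and positive at negative ones, does not affect the winding.

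\textbf{Step 3 (counting).} Cut off the ends to obtain a compact surface $\Sigma_0$ whose boundary circles are oriented as $\partial\Sigma_0$. The standard argument principle gives $\sharp\sigma^{-1}(0)=\sum_{\text{boundary}}$ of the winding of $\sigma$ relative to any global trivialization of $L|_{\Sigma_0}$. Take the global trivialization $\tau\otimes\eta^*$, where $\eta$ is a nonvanishing vector field on $\Sigma_0$. Comparing it with $\tau\otimes(\partial_s)^*$ on each end, the boundary count splits into two parts.
\begin{itemize}
\item The winding of $\sigma$ relative to $\tau\otimes(\partial_s)^*$ contributes $\sum_{\Gamma^+}\wind_\infty-\sum_{\Gamma^-}\wind_\infty$. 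The sign flips at negative ends because $t$ runs opposite to the boundary orientation there.
\item The winding of $(\partial_s)^*$ relative to $\eta^*$ contributes minus the obstruction to extending $\partial_s$ over $\Sigma_0$. By Poincar\'e--Hopf this obstruction is $\chi(\Sigma_0)=\chi(\Sigma)-\sharp\Gamma$, so the contribution is $-\chi(\Sigma)+\sharp\Gamma$.
\end{itemize}
Summing the two parts yields the formula. As a sanity check, a plane ($\sharp\Gamma=1$, $\chi=2$) gives $\windd_\pi=\wind_\infty-1$, which agrees with \eqref{windpi}.

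\textbf{Main obstacle.} The main obstacle is the sign bookkeeping in Step 3: the orientation of the boundary circles at positive versus negative ends, and the dualization $T\dot\Sigma\mapsto T^*\dot\Sigma$ which flips the sign of the tangent contribution. I would fix all conventions by checking them against the plane case and against a trivial cylinder covered with a perturbation. Positivity of the zeros in Step 1 is routine by the similarity principle.
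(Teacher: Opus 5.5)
Your argument is correct and is essentially the standard Hofer--Wysocki--Zehnder degree argument; the paper gives no proof of its own and simply cites that argument. It uses the same three ingredients: isolation and positivity of the zeros of $\pi\circ du$ via the similarity principle, the asymptotic formula to read off $\mathrm{wind}_\infty(u;z)$ at each end, and an argument-principle/Poincar\'e--Hopf count that produces the term $-\chi(\Sigma)+\sharp\Gamma$. One correction: the case $\pi\circ du\equiv 0$ must be excluded as a hypothesis, as HWZ do, rather than ``interpreted trivially''. For instance, for a branched cover of a trivial cylinder with three punctures, such as the top levels in Theorem \ref{C}-(iii.3)--(iii.5), neither winding is defined, and setting both to zero would contradict $-\chi(\Sigma)+\sharp\Gamma=1$.
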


\begin{lemma} \label{partialainN}
Let $\lambda$ be a nondegenerate contact form on a closed $3$-manifold $M$, and let $J \in \mathcal{J}( \lambda)$. Let $\Sigma$ be a genus zero closed Riemann surface, and let $v_1:\dot \Sigma = \Sigma \setminus \Gamma \to \R \times M$ be the first level of a building as in Theorem \ref{C}-(iii)-\ref{itm:vi}, that is,
$v_1$ is a $J$-holomorphic curve with Fredholm index $2-d$, a positive end at $\alpha_1^d$ and $d+1$ negative ends, one at $\alpha_2^d$ and the other $d$ ends at the same index-$2$ Reeb orbit $\alpha_3$. Assume that $d>1$. Then  $\partial_a \notin dv_1(z) \cdot T_z\dot \Sigma, \forall z\in \dot \Sigma$. 
\end{lemma}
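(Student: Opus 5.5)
Since $v_1 = \mathfrak{v}_1\circ\varphi$ with $\varphi$ an unbranched $d$-fold cover, $dv_1(z)\cdot T_z\dot\Sigma = d\mathfrak{v}_1(\varphi(z))\cdot T_{\varphi(z)}\dot\Sigma'$. It therefore suffices to prove the claim for the somewhere injective pair of pants $\mathfrak{v}_1=(a,\bar{\mathfrak v})$. Because $\mathfrak v_1$ is $J$-holomorphic and $J\partial_a=R_\lambda$, the condition $\partial_a\in d\mathfrak v_1(w)\cdot T_w$ forces $R_\lambda$ into the image as well. The image is then the complex line spanned by $\partial_a$ and $R_\lambda$, which is equivalent to $\pi\circ d\bar{\mathfrak v}(w)=0$. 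So the goal is to show that $\pi\circ d\bar{\mathfrak v}$ has no zeros, i.e.\ $\windd_\pi(\mathfrak v_1)=0$. Since zeros of $\pi\circ d\bar{\mathfrak v}$ have positive local degree, and $\mathfrak v_1$ is not a trivial cylinder (it has three ends), it is enough to show that the algebraic count vanishes.

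By Proposition \ref{prop5.6HWZ} with $\chi(S^2)=2$ and $\#\Gamma=3$,
\[
\windd_\pi(\mathfrak v_1)=\wind_\infty(\mathfrak v_1;z_+)-\wind_\infty(\mathfrak v_1;z_{\alpha_2})-\wind_\infty(\mathfrak v_1;z_{\alpha_3})+1 .
\]
Use the trivialization $\tau$ fixed before Lemma \ref{odepeq}, which extends over $\mathfrak v_1^*\xi$. The standard bounds from Theorem \ref{reprass} give $\wind_\infty(z_+)\le\wind_{<0}(A_{\alpha_1})$ at the positive end, and $\wind_\infty(z_{\alpha_i})\ge\wind_{\ge0}(A_{\alpha_i})$ at the negative ends. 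Since $d>1$, Lemma \ref{odepeq} shows that $\alpha_1,\alpha_2$ are elliptic, so $\wind_{<0}(A_{\alpha_1})=(\CZ(\alpha_1)-1)/2$ and $\wind_{\ge0}(A_{\alpha_2})=(\CZ(\alpha_2)+1)/2$. From the index computation in Lemma \ref{odepeq}, $\CZ(\alpha_1)-\CZ(\alpha_2)=2$, which gives $\wind_\infty(z_+)-\wind_\infty(z_{\alpha_2})\le 0$.

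For $\alpha_3$ we need the $\tau$-index. The trivialization $\tau$ extends over the capping disk $w$, so $\CZ(\alpha_3)=2$ with $\alpha_3$ hyperbolic, and $\wind_{\ge0}(A_{\alpha_3})=1$. Hence $\wind_\infty(z_{\alpha_3})\ge 1$, and altogether $\windd_\pi(\mathfrak v_1)\le 0-1+1=0$. Combined with $\windd_\pi\ge0$ this gives $\windd_\pi(\mathfrak v_1)=0$, so $\pi\circ d\bar{\mathfrak v}$ never vanishes, and the claim follows for $v_1$.

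The main obstacle is the consistency of trivializations. We need one global trivialization of $\mathfrak v_1^*\xi$ that simultaneously computes $\CZ(\alpha_3)=2$ through a capping disk and the indices of $\alpha_1,\alpha_2$. This is where $c_1(\xi)|_{\pi_2(M)}=0$ and the contractibility of $\alpha_3$ are used, following the paper's setup of $\tau$. As a cross-check, I would also verify the relation directly from $\ind(\mathfrak v_1)=1=1+\CZ(\alpha_1)-\CZ(\alpha_2)-\CZ(\alpha_3)$ in this $\tau$.
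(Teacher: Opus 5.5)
Your proof is correct and follows the paper's argument. Both bound $\windd_\pi$ via Proposition \ref{prop5.6HWZ}, using $\wind_\infty\geq 1$ at the $\alpha_3$-ends, ellipticity from Lemma \ref{odepeq}, and the Conley--Zehnder gap of $2$, to force $\windd_\pi=0$. The differences are cosmetic: you run the computation on the underlying simple pair of pants $\mathfrak v_1$ with $\CZ(\alpha_1)-\CZ(\alpha_2)=2$ rather than on the unbranched $d$-fold cover $v_1$ with $\CZ(\alpha_1^d)-\CZ(\alpha_2^d)=2$, and you spell out why the non-vanishing of $\pi\circ d\bar{\mathfrak v}$ keeps $\partial_a$ out of the image, which the paper leaves implicit.
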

\begin{proof}
Let $\Gamma=\{z_1,z_2,z_3^1,\ldots,z_3^d\}$ be the set of punctures of $v_1$ so that 
 $v_1$ is asymptotic to $\alpha_1^d$ at $z_1$, $\alpha_2^d$ at $z_2$, and $\alpha_3$ at $z_3^i$ for all $1 \leq i \leq d$. Since $\CZ(\alpha_3)=2$, we have $\wind_{\infty}(v_1;z_3^i)\geq 1, \forall i.$   Proposition \ref{prop5.6HWZ} gives
$$
\begin{aligned}
0\leq \windd_{\pi}(v_1) &  \leq  \wind_{\infty}(v_1;z_1) - \wind_{\infty}(v_1;z_2) -d-2 +(d+2)\\
& = \wind_{\infty}(v_1;z_1) - \wind_{\infty}(v_1;z_2).
\end{aligned}
$$
Since $d>1$, we conclude from Lemma \ref{odepeq} that  $\alpha_1^d$ and $\alpha_2^d$ are necessarily elliptic. In particular,
$$\wind_{\infty}(v_1;z_1) \leq \frac{\CZ(\alpha_1^d)-1}{2} \;\; \mbox{ and } \;\; \wind_{\infty}(v_1;z_2) \geq \frac{\CZ(\alpha_1^d)+1}{2}.$$

Since $\ind(v_1)=2-d$, we have $\CZ(\alpha_1^d)-\CZ(\alpha_2^d)=2$. Hence
$$
\begin{aligned}
0 &\leq \windd_{\pi}(v_1)=\wind_{\infty}(v_1;z_1) - \wind_{\infty}(v_1;z_2)\\
&\leq  \frac{\CZ(\alpha_1^d)-1}{2} - \frac{\CZ(\alpha_2^d)+1}{2} \\
&= \frac{\CZ(\alpha_1^d)-\CZ(\alpha_2^d)}{2}-1\\
&= \frac{2}{2}-1=0.
\end{aligned}
$$
We conclude that ${\rm wind}_{\pi}(v_1)=0$, i.e., $\pi \circ dv_1$ never vanishes. 
\end{proof}


\begin{lemma} \label{dim1} 
Assume that hypotheses of Lemma \ref{partialainN} hold. Assume, moreover, that $J \in \mathcal{J}_{\text{reg}}(\lambda),$ where $\mathcal{J}_{\text{reg}}(\lambda)$ is given in Theorem \ref{HTteo4.1}. Consider the  operator 
$
D_{v_1}^N: W^{1,p}(N_{v_1}) \to  L^p(\overline{\rm Hom}_{\C}(T\Sigma, N_{v_1})),$
defined as in section \ref{sec:basics}.  Then
 $\dim \ker D_{v_1}^N = 1$. 
\end{lemma}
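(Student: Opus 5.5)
The plan is to squeeze $\dim\ker D^N_{v_1}$ between a lower bound of $1$, coming from the $\R$-action, and an upper bound of $1$, coming from Wendl's estimate in Theorem \ref{teo1Wendl}.

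\textbf{Step 1: $v_1$ is immersed.} The curve $v_1$ is a $d$-fold cover of the somewhere injective pair of pants $\mathfrak{v}_1$, and this cover has no branching points (Proposition \ref{A}). Since $\ind(\mathfrak{v}_1)=1\le 2$ and $J\in\mathcal{J}_{\rm reg}(\lambda)$, Theorem \ref{HTteo4.1} says $\mathfrak{v}_1$ is embedded. An unbranched cover of an embedding is an immersion, so $v_1$ is immersed. Hence the normal bundle $N_{v_1}$ and the operator $D^N_{v_1}$ are well defined, and Theorem \ref{teo1Wendl} applies to $v_1$.

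\textbf{Step 2: lower bound.} The translates $(a+\rho,u)$ of $v_1=(a,u)$ are $J$-holomorphic for every $\rho$, because $J$ is $\R$-invariant. Differentiating in $\rho$ at $\rho=0$ shows that $\partial_a$, viewed as a section of $v_1^*T(\R\times M)$, lies in the kernel of the full linearized operator. Its projection $\eta$ onto $N_{v_1}$ therefore lies in $\ker D^N_{v_1}$. By Lemma \ref{partialainN}, $\partial_a$ is never tangent to $v_1$, so $\eta$ vanishes nowhere and in particular $\eta\neq 0$. One should also check that $\eta$ lies in $W^{1,p}$. Near the punctures $N_{v_1}$ is identified with $\xi$ and $\partial_a$ is tangent to the trivial cylinders. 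By Theorem \ref{reprass}, the normal component of $\partial_a$ then decays exponentially, so the integrability holds. This gives $\dim\ker D^N_{v_1}\ge 1$.

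\textbf{Step 3: upper bound.} Since $d>1$, we have $\ind(v_1)=2-d\le 0$, so Theorem \ref{teo1Wendl}(i) applies. To compute $\#\Gamma_0(v_1)$ we sort the punctures by parity of the Conley–Zehnder index.
\begin{itemize}
\item By Lemma \ref{odepeq}, $\alpha_1^d$ and $\alpha_2^d$ are elliptic, so their Conley–Zehnder indices are odd.
\item The $d$ punctures at $\alpha_3$ have even index, since $\CZ(\alpha_3)=2$ in the trivialization $\tau$ fixed above, which extends over a capping disk and over $v_1^*\xi$.
\end{itemize}
Note that $\tau$ is a trivialization of $v_1^*\xi$, which agrees with the normal bundle near the ends, so parities are computed consistently. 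Hence $\#\Gamma_0(v_1)=d$ and
$$2c_N(v_1)=(2-d)-2+0+d=0,$$
so $c_N(v_1)=0$. Then
$$K(0,d)=\min\{k_1+2k_2:\ k_1\le d,\ k_1+k_2>0\}=1,$$
attained at $k_1=1$, $k_2=0$. Theorem \ref{teo1Wendl}(i) thus gives $\dim\ker D^N_{v_1}\le 1$. Combined with Step 2, this yields $\dim\ker D^N_{v_1}=1$.

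The main point needing care is Step 2: one must verify that the normal projection of $\partial_a$ is a genuine nonzero element of the domain $W^{1,p}(N_{v_1})$. Nonvanishing is exactly the content of Lemma \ref{partialainN}. Step 3 is a direct substitution, provided the parities of the asymptotic orbits are read off correctly, which Lemma \ref{odepeq} ensures.
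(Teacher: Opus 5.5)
Your proposal is correct and follows the paper's proof step for step. In both, $v_1$ is immersed because it is an unbranched cover of the embedded index-$1$ curve $\mathfrak{v}_1$ (Theorem \ref{HTteo4.1}); Theorem \ref{teo1Wendl}(i), with $\#\Gamma_0(v_1)=d$ from Lemma \ref{odepeq} and $c_N(v_1)=0$, gives the upper bound $1$; and the $\R$-translation gives the lower bound. Your extra checks, namely that the normal projection of $\partial_a$ is nonzero by Lemma \ref{partialainN} and lies in $W^{1,p}$ by exponential decay, are details the paper leaves implicit, and your value $K(0,d)=1$ agrees with the paper's minimum.
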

\begin{proof}

 Let $\mathfrak{v}_1$ be the somewhere injective curve $d$-covered by $v_1$.  Since $\mathfrak{v}_1$ has index $1$, we conclude  from Theorem \ref{HTteo4.1} that $\mathfrak{v}_1$ is an embedding. Since the covering $v_1 \to \mathfrak{v}_1$ has no branching point, $v_1$ is an immersion.

Consider a trivialization $\tau$ of $v_1^*\xi$ that extends to a trivialization of $\xi$ along a disk that bounds $\alpha_3$. Recall that $\ind(v_1)=2-d$ implies  $\CZ(\alpha_1^d)-\CZ(\alpha_2^d)=2$. Since $d>1$, it follows from Lemma \ref{odepeq} that $\sharp \Gamma_0(v_1)=d$, that is $\alpha_1$ and $\alpha_2$ are elliptic. Therefore, we compute
 $$
 c(v_1,\Gamma)  = c_{\tau}(v_1)-\chi(v_1) +\frac{1}{2}(\CZ(\alpha_1^d)-\CZ(\alpha_2^d)-d\CZ(\alpha_3)-\sharp \Gamma_1(v_1))=0
 $$
 By Theorem \ref{teo1Wendl}, we obtain
  $$
  \dim \ker D_{v_1}^N \leq \min \big\{k_1+k_2 \; ; \; k_1,k_2\in \Z^{\geq 0}, k_1 \leq \#\Gamma_0(u)=d \mbox{ and } 2k_1+k_2> 2 c(v_1,\Gamma)=0\big\}=1,
  $$ where the minimum above is realized for $k_1=1$ and $k_2=0$.

  Finally, we know that $\dim \ker D_{v_1}^N\geq 1$ since the $\R$-translations of $v_1$ determines a non-trivial section of $N_{v_1}$.  \end{proof}

\section{Gluing of buildings}\label{sec:gluing}

The goal of this section is to analyze buildings of type (iii.6) in Theorem \ref{C} and show that they do not contribute to the boundary of the moduli space of index-2 cylinders. This is achieved via gluing constructions and an analysis of orientations, which shows that such contributions cancel in pairs.
Our construction follows the obstruction bundle gluing theory of
Hutchings-Taubes~\cite{hutchings2009}, and we also refer to Bao-Honda \cite{BH18} for a similar approach. See also \cite{datta2025invitation}. Instead of reviewing all the analytic
machinery, we recall only the ingredients that are specific to our situation and explain in detail the only additional technical point, namely the non–vanishing gluing condition $\bar x_k \neq 0 \forall k$,
which holds for generic $J$.  
If $\bar{x}_k = 0$ for some $k$, then the corresponding building develops an additional breaking and has at least three levels. Such configurations lie in strata of codimension at least two and therefore do not appear as boundary points of one-dimensional moduli spaces for generic $J$.

The gluing analysis for these buildings is based on a finite-dimensional obstruction bundle construction. More precisely, the upper level consists of a multiply covered curve of Fredholm index $2-d$, whose linearized normal Cauchy–Riemann operator has a $(d-1)$–dimensional cokernel. The lower level consists of $d$ rigid holomorphic planes, each contributing a gluing parameter.
The pregluing construction produces approximate solutions depending on $(d+1)$ translation parameters. The failure of these approximate solutions to be holomorphic defines an error term, whose projection onto the cokernel of the linearized operator at the upper level gives rise to a finite-dimensional obstruction map $\mathfrak s$. After quotienting by the $\mathbb{R}$–translation, the space of gluing parameters has dimension $d$, while the obstruction space has dimension $d-1$, so that the resulting moduli space is one-dimensional. 

Throughout this section, we consider only holomorphic buildings $\mathcal B$ that arise as limits of 
one-dimensional moduli spaces of index-2 holomorphic cylinders, for generic choices of 
$J \in \mathcal{J}_{\mathrm{reg}}(\lambda)$. For generic J, somewhere injective curves are Fredholm regular, see 
\cite{W10}. In the presence of symmetry, such as for multiply covered 
curves, transversality becomes more subtle, see \cite{Zhou_polyfold}. In our setting, 
these non-regular configurations are handled using obstruction bundle 
techniques.

Let
$\mathcal B = (v_1, v_2)$
be a building as in Theorem \ref{C}-(iii.6). The top level $v_1$ is a $d$–fold cover of a somewhere injective pair of
pants with one positive end at $\alpha_1^d$, one negative end at $\alpha_2^d$, and $d$ negative ends at the index-$2$ orbit $\alpha_3$. Its index is ${\rm ind}(v_1)=2-d$ . The lower level consists of a trivial cylinder over $\alpha_2^d$ together with $d$
index-$1$ planes $u_1,\ldots u_d$ asymptotic to $\alpha_3$.
Hence, all lower curves are regular,
 the only non–regular component is $v_1$, and the total index of the building equals $2$. Consequently, the obstruction space comes entirely from the top level $v_1$.

\begin{figure}[ht]
\centering 
\includegraphics[scale=0.4]{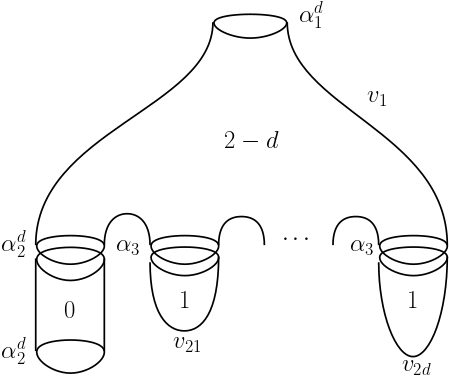} 
\caption{J-holomorphic building $\mathcal B= (v_1,v_2)$}
\label{figura:c4}
\end{figure}

The parameters in the gluing construction correspond to
translations of the curves in the $\R$-direction.  Let
$T=(T_0,T_1,\dots,T_d)\in(0,+\infty)^{d+1}
$
where $T_0$ translates the top curve $v_1$ upward and $T_k$
translates the plane $v_{2k}, k=1,\ldots,d,$ downward. 
After applying these translations, we obtain curves
$
v_{1,T_0}, v_{21,T_1}, \ldots, v_{2d,T_d}.
$
After quotienting by the overall $\mathbb{R}$–translation, the effective gluing parameter space has dimension $d$. On the other hand, since the upper level has Fredholm index $2-d$ and its kernel is generated by the $\mathbb{R}$–translation (see Lemma \ref{dim1}), the cokernel has dimension $d-1$. Therefore, the zero set of the obstruction map $\mathfrak{s}$ is expected to be one-dimensional, corresponding to ends of the moduli space of index-2 cylinders.

Near each puncture, we consider cylindrical coordinates
$(s,t)$ where the curves converge to $\alpha_3$.
More precisely, after choosing a trivialization of the normal bundle
of $\alpha_3$, the curves can be written as graphs
$
v_{1,T_0}(s,t)
=\exp_{\alpha_3(t)}(\eta_{1k}(s,t))$ and $
v_{2k,T_k}(s,t)
=\exp_{\alpha_3(t)}(\eta_{2k}(s,t)),
$
where the sections $\eta_{1k}$ and $\eta_{2k}$ along the upper and lower parts, respectively, decay exponentially as
$s\to -\infty$ and $s\to +\infty$, respectively.

Let
$R \gg \max_{0\leq k\leq d} T_k$
be the gluing parameter.
After translating the curves by $T\in (0,+\infty)^{d+1}$,
the relevant part of $v_{1,T_0}$ is centered near $s=R/2$,
while the relevant part of $v_{2k,T_k}$ is centered near $s=-R/2$.
Choose a smooth function
$\beta:\R\to[0,1]$
such that
$
\beta(s)=
1$ for  $s\le -1$ and $\beta(s)=0$ for $s\ge 1$.
Define the translated cutoff functions $\beta_1(s)=\beta(s-R/2)$ and $\beta_{2k}(s)=1-\beta(s+R/2), k=1,\ldots,d.$
On the neck
$
[-R/2,R/2]\times S^1$
corresponding to the $k$-th puncture we define
$$v_{T,R}(s,t)
=\exp_{\alpha_3(t)}
\Big(\beta_1(s)\,\eta_{1k}(s-R/2,t) +\beta_{2k}(s)\,\eta_{2k}(s+R/2,t) \Big).
$$

Outside the neck regions, the map coincides with
$v_{1,T_0}$ and $v_{2k,T_k}$.
The resulting domain $\Sigma_{T,R}$ is obtained by inserting
cylinders of length $R$ at the punctures. Standard asymptotic estimates imply that the preglued map $v_{T,R}:\Sigma_{T,R} \to \R \times M$ satisfies
$\|\bar\partial_J v_{T,R}\| = O(e^{-\delta R})$
for some $\delta>0$.
Because the curves $v_1,v_{21},\ldots,v_{2k}$ are $J$-holomorphic, the region where $v_{T,R}$ may not satisfy the Cauchy-Riemann equation is the support of $\beta_i'$.

We now look for a section $\psi$ of the normal bundle of $v_{T,R}$ which has the form
$$
\psi=\beta_1\psi_1+\sum_{k=1}^d \beta_{2k}\psi_{2k},
$$
where
$\psi_1$ and $\psi_{2k}$ are sections of the normal bundle of $v_1$ and $v_{2k}$, respectively. 
We then define the perturbed curve by
$$
v=\exp_{v_{T,R}}(\psi).
$$
The condition that $v$ be $J$--holomorphic is $\bar\partial_J(v)=0.$
Expanding near $v_{T,R}$, we obtain
$$
\bar\partial_J(v)
=
\bar\partial_J(v_{T,R})
+
Dv_{T,R}\psi
+
R(\psi),$$
where $Dv_{T,R}$ is the linearization of the Cauchy--Riemann operator at $v_{T,R}$ and $R(\psi)$ contains  nonlinear terms, satisfying
$$
R(\psi)=O(|\psi|\,|\nabla\psi|)+O(|\psi|^2).
$$

Using the decomposition of $\psi$, the equation
$\bar\partial_J(v)=0$ can be written as
$$
\beta_1\Theta_1(\psi_1,\psi_{21},\dots,\psi_{2d})
+
\sum_{k=1}^d \beta_{2k}\Theta_{2k}(\psi_1,\psi_{2k})
=0,
$$
where $\Theta_1$ and $\Theta_{2k}$ are the components of the nonlinear
Cauchy--Riemann equation on the upper and lower parts, respectively.
More precisely, the equation on the top level has the form
\begin{align*}
\Theta_1(\psi_1,\psi_{21},\dots,\psi_{2d})
&=
Dv_1\psi_1+F(\psi_1)
+\frac12\sum_{k=1}^d (\partial_s\beta_{2k})(\eta_{2k}+\psi_{2k})\,d\bar z \\
&\quad
+\frac12\sum_{k=1}^d
\Big[
(\nabla_{\psi_1}J)(\partial_s\beta_{2k})\psi_{2k}\,ds
+\beta_{2k}(\nabla_{\psi_1}J)\nabla\psi_{2k}
+2\widetilde R_{1k}(\psi_1,\psi_{2k})
\Big],
\end{align*}
and for the lower components, we have
\begin{align*}
\Theta_{2k}(\psi_1,\psi_{2k})
&=
Dv_{2k}\psi_{2k}+F_k(\psi_{2k})
+\frac12(\partial_s\beta_1)(\eta_{1k}+\psi_1)\,d\bar z \\
&\quad
+\frac12
\Big[
(\nabla_{\psi_{2k}}J)(\partial_s\beta_1)\psi_1\,ds
+\beta_1(\nabla_{\psi_{2k}}J)\nabla\psi_1
\Big]
+\widetilde R_{2k}(\psi_1,\psi_{2k}),
\end{align*}
for every $k=1,\ldots, d$.

Here, $Dv_1=D_{v_1}^N$ and $Dv_{2k}=D_{v_{2k}}^N$ are the normal linearized Cauchy-Riemann operators
along $v_1$ and $v_{2k}$, $F(\psi_1)$ and $F_k(\psi_{2k})$ are quadratic terms of type 1,
and $\widetilde R_{1k}$ and $\widetilde R_{2k}$ contain the remaining nonlinear
terms.

Therefore, solving the nonlinear gluing problem is equivalent to solving the
system
\[
\Theta_1(\psi_1,\psi_{21},\dots,\psi_{2d})=0,
\qquad
\Theta_{2k}(\psi_1,\psi_{2k})=0,\quad k=1,\dots,d.
\]

The equations $\Theta_{2k}=0, k=1,\ldots, d,$
can be solved first, because the operators $Dv_{2k}$ are surjective.  More precisely, if $Q_{2k}$ denotes a bounded right inverse of
$Dv_{2k}$, then the equation $\Theta_{2k}=0$ can be rewritten as
$$
\begin{aligned}\psi_{2k}
& =
-\,Q_{2k}\Big(
F_k(\psi_{2k})
+\frac12(\partial_s\beta_1)(\eta_{1k}+\psi_1)\,d\bar z
+\frac12\big[(\nabla_{\psi_{2k}}J)(\partial_s\beta_1)\psi_1\,ds\\ &
+\beta_1(\nabla_{\psi_{2k}}J)\nabla\psi_1\big]
+\widetilde R_{2k}(\psi_1,\psi_{2k})
\Big].
\end{aligned}$$
For $R\gg |T|\gg 0$ and $\psi_1$ small, this is a contraction mapping equation,
and thus has a unique solution
$
\psi_{2k}=\psi_{2k}(\psi_1,T), k=1,\dots,d.
$

Substituting these solutions into the equation $\Theta_1=0$ gives an equation
for $\psi_1$
$$
\Theta_1\bigl(\psi_1,\psi_{21}(\psi_1,T),\dots,\psi_{2d}(\psi_1,T)\bigr)=0.
$$

Since $Dv_1$ is not necessarily surjective, we consider the decomposition 
$
L^p(\Lambda^{0,1}T^*\dot\Sigma_1\otimes N_{v_1})={\rm image}(Dv_1)\oplus \ker(Dv_1^*).
$ 
Let
$$
\Pi_+:L^p(\Lambda^{0,1}T^*\dot\Sigma_1\otimes N_{v_1})\to {\rm image}(Dv_1),
\qquad
\Pi_{W_+}:L^p(\Lambda^{0,1}T^*\dot\Sigma_1\otimes N_{v_1})\to \ker(Dv_1^*)
$$
denote the corresponding projections.

The equation $\Theta_1=0$ is therefore equivalent to the system
\begin{align}
\Pi_+\Theta_1\bigl(\psi_1,\psi_{21}(\psi_1,T),\dots,\psi_{2d}(\psi_1,T)\bigr)&=0,
\label{eq:proj-image}\\
\Pi_{W_+}\Theta_1\bigl(\psi_1,\psi_{21}(\psi_1,T),\dots,\psi_{2d}(\psi_1,T)\bigr)&=0.
\label{eq:proj-cokernel}
\end{align}

The first equation determines the component of $\psi_1$ orthogonal to the
cokernel. Indeed, if $Q_1$ is a bounded right inverse of $Dv_1$ on
${\rm image}(Dv_1)$, then \eqref{eq:proj-image} can be rewritten as
$$
\begin{aligned}
    \psi_1
& =
-\,Q_1\Pi_+\Big(
F(\psi_1)
+\frac12\sum_{k=1}^d (\partial_s\beta_{2k})(\eta_{2k}+\psi_{2k}(\psi_1,T))\,d\bar z\\
& 
+\frac12\sum_{k=1}^d
\Big[
(\nabla_{\psi_1}J)(\partial_s\beta_{2k})\psi_{2k}(\psi_1,T)\,ds
+\beta_{2k}(\nabla_{\psi_1}J)\nabla\psi_{2k}(\psi_1,T)
\\& +2\widetilde R_{1k}\bigl(\psi_1,\psi_{2k}(\psi_1,T)\bigr)
\Big]
\Big).
\end{aligned}
$$
Again, for $|T|$ large this is solved by contraction, giving a unique
small solution
$
\psi_1=\psi_1(T).$

We are thus left with the finite-dimensional equation obtained by projecting
to the cokernel
$$
\Pi_{W_+}\Theta_1\bigl(\psi_1(T),\psi_{21}(T),\dots,\psi_{2d}(T)\bigr)=0.
$$

This equation defines the obstruction section
$$
\mathfrak{s}:(0,+\infty)^{d+1}\to W_+,
\qquad
\mathfrak{s}(T_0,\dots,T_d)
:=
\Pi_{W_+}\Theta_1\bigl(\psi_1(T),\psi_{21}(T),\dots,\psi_{2d}(T)\bigr).
$$
The zeros of this map correspond to genuine holomorphic cylinders obtained by gluing.
Equivalently, we have
\begin{align*}
\mathfrak{s}(T_0,\dots,T_d)
&=
\Pi_{W_+}\Bigg[
Dv_1\psi_1(T)+F(\psi_1(T))
+\frac12\sum_{k=1}^d (\partial_s\beta_{2k})(\eta_{2k}+\psi_{2k}(T))\,d\bar z \\
&
+\frac12\sum_{k=1}^d
\Big(
(\nabla_{\psi_1(T)}J)(\partial_s\beta_{2k})\psi_{2k}(T)\,ds\\ & 
+\beta_{2k}(\nabla_{\psi_1(T)}J)\nabla\psi_{2k}(T)
+2\widetilde R_{1k}(\psi_1(T),\psi_{2k}(T))
\Big)
\Bigg].
\end{align*}

Since $Dv_1\psi_1(T)\in {\rm image}(Dv_1)$, its projection to $W_+$ vanishes, and,
therefore, the nonlinear obstruction equation becomes
\begin{align}
\mathfrak{s}(T_0,\dots,T_d)
&=
\Pi_{W_+}\Bigg[
F(\psi_1(T))
+\frac12\sum_{k=1}^d (\partial_s\beta_{2k})(\eta_{2k}+\psi_{2k}(T))\,d\bar z
\notag\\
&
+\frac12\sum_{k=1}^d
\Big(
(\nabla_{\psi_1(T)}J)(\partial_s\beta_{2k})\psi_{2k}(T)\,ds
+\beta_{2k}(\nabla_{\psi_1(T)}J)\nabla\psi_{2k}(T)\\ & 
+2\widetilde R_{1k}(\psi_1(T),\psi_{2k}(T))
\Big)
\Bigg]
=0.
\label{eq:obstruction-section}
\end{align}

We define the obstruction bundle by
$$
\mathcal O := (R_0,+\infty)^{d+1}\times W_+ \longrightarrow (R_0,+\infty)^{d+1}, \qquad R_0 \gg 0.
$$
The map $\mathfrak{s}$ is a section of $\mathcal O$, and its zeros are in one-to-one
correspondence with solutions of the nonlinear gluing problem. Thus, the gluing problem has been reduced to the finite-dimensional nonlinear
equation \eqref{eq:obstruction-section}.

To make the dependence on $T=(T_0,T_1,\ldots,T_d)$ more explicit, recall that near the $k$-th negative puncture of $v_1$ the pregluing error has a leading term coming from the lower level
$$
\eta_{2k}(s,t)=c_k e^{\mu (s+T_k)}\phi(t)+h_k(s,t),
$$
where $\mu<0$ is the largest negative eigenvalue of the asymptotic operator
at $\alpha_3$, $\phi$ is a unit $\mu$-eigensection, $c_k\in \{-1,+1\}$, and
$h_k(s,t)\to 0$ exponentially faster than $e^{\mu s}$.  Hence, the leading contribution comes from the terms
$$
\frac12 \sum_{k=1}^d (\partial_s\beta_{2k})\,\eta_{2k}\,d\bar z .
$$
Also, for $\sigma\in \ker Dv_1^*$ we write its asymptotic expansion near
the $k$--th negative puncture in the form
$$
\sigma_k(s,t)=e^{-\mu s}\,b_k(\sigma)\,\phi(t)+\widetilde h_k(s,t),
$$
where $b_k(\sigma)\in \R$ and $\widetilde h_k(s,t)\to 0$ exponentially faster than $e^{-\mu s}$ as $s\to -\infty$.

Recall that the nonlinear equation on the top level can be written as
$$
F_1(\psi_1,T)
:=
\Theta_1\bigl(\psi_1,\psi_{21}(\psi_1,T),\dots,\psi_{2d}(\psi_1,T)\bigr).
$$

Using the asymptotic expansion
for $\eta_{2k}$, we separate the dominant term in the pregluing error and define
$$
\widetilde F_1(T)
:=
F_1(\psi_1(T),T)
-
\frac12
\sum_{k=1}^d
(\partial_s\beta_{2k})
\,c_k e^{\mu (s+T_k)}
\phi(t)\, d\bar z.
$$

Thus $\widetilde F_1(T)$ contains all the remaining contributions to the
nonlinear equation on the top level. Substituting these asymptotic expansions into the pairing defining
$\mathfrak{s}$, we obtain
\begin{align*}
\mathfrak{s}(T_0,\dots,T_d)(\sigma)
&=
\left\langle \sigma,\,
\frac12 \sum_{k=1}^d (\partial_s\beta_{2k})\,\eta_{2k}\,d\bar z
\right\rangle
+
\left\langle \sigma,\,
\widetilde F_1(T)
\right\rangle \\
&=
\frac12 \sum_{k=1}^d b_k(\sigma)c_k e^{\mu T_k}
+
\mathcal{R}(T)(\sigma),
\end{align*}
where $\mathcal R(T)(\sigma) := \langle \sigma, \widetilde F_1(T) \rangle$ is the remainder term.

Thus, the obstruction equation can be written as
\begin{equation}\label{eq:obstruction-explicit}
\mathfrak{s}(T_0,\dots,T_d)(\sigma)
=
\frac12 \sum_{k=1}^d b_k(\sigma)c_k e^{\mu T_k}
+
\mathcal{R}(T)(\sigma)
=0,
\qquad
\forall \sigma\in \ker D_1^*.
\end{equation}

We now describe explicitly the finite–dimensional reduction that leads
to a linear system $Bx=b$ and explain why its solution $\bar x=B^{-1}b$ satisfies
$\bar x_k \neq 0$ for generic $J$.

Choose a basis
$\sigma_1,\dots,\sigma_{d-1}$
of $\ker Dv_1^*$. 
At the $k$–th negative end these sections satisfy the asymptotic
expansion
$$
\sigma_j(s,t)
=
b_{j,k}\,e^{-\mu s}\phi(t)
+
o(e^{-\mu s}),
\qquad s\to-\infty,
$$
where $b_{j,k}\in\R$ are the corresponding leading coefficients of $\sigma_j$ at the $k$-th end. We may assume that $b_{j,1}=1$ for every $j$, see Proposition \ref{kerDu2}.
 
Recall that the parameter $T_0$ translates the top level $v_1$ in the
$\R$–direction. Its role is to push the upper level sufficiently far from the
lower planes so that the corresponding error terms become exponentially small.
In particular, the contribution coming from the upper level produces a term of
order $O(e^{\mu T_0})$ in the obstruction equation. Since $\mu<0$, this term
decays when $T_0\to+\infty$ and will be negligible compared with the leading
contribution coming from the lower levels.

Taking the $L^2$–pairing and keeping only the leading contribution, we obtain
\begin{equation}\label{obstruction_non_linear}
\mathfrak s(T)(\sigma_j)
=
O(e^{\mu T_0})+ \sum_{k=1}^{d} b_{j,k}\, c_k\, e^{\mu T_k}
+
\text{(higher order terms)}, \qquad  j=1,\ldots, d-1.
\end{equation}

Using $T_1\gg 1$ as the free parameter, we normalize by
$c_1e^{\mu T_1}$ and define $x:=(x_1,\ldots,x_{d-1})$ by
$$
x_{k}:=\frac{c_{k+1}}{c_1}e^{\mu(T_{k+1}-T_1)}
,\qquad k=1,\dots,d-1.
$$
Thus $e^{\mu(T_{k+1}-T_1)}>0$ and the sign of $x_{k}$ is exactly the 
ratio $c_{k+1}/c_1\in\{\pm1\}$ coming from the asymptotic leading coefficients
of the planes in $v_2$.

Using that $\mathfrak s(T)(\sigma_j)=0$ for all $j$, we obtain a system of equations of the form
\begin{equation}\label{non-linear_x}
Bx-b + R_{T_0,T_1}(x)=0,
\end{equation}
where 
$$
B=(b_{j,k+1})\in M_{d-1}(\R), \qquad b=(-1)\in\R^{d-1},$$ 
and
$R_{T_0,T_1}(x)$ contains higher order terms. Here, one may write
$$
R_{T_0,T_1}(x)=G_{T_1}(x)+E_{T_0,T_1},
\qquad
\|E_{T_0,T_1}\|\le C e^{-\delta_0 (T_0-T_1)}, \quad \delta_0>0,
$$
where $G_{T_1}(x)$ depends only on the relative neck data and the local
gluing near the ends.

We first consider the linear system $Bx=b$ and show that $B$ is invertible.
If $B$ were singular, there would exist a nontrivial linear combination
$\sigma=\sum_j a_j\sigma_j$ whose leading coefficients vanish at all
negative ends.
Such a section would decay faster than the leading exponential rate at
every end, which forces $\sigma\equiv0$, see Proposition \ref{kerDu2}.
This contradicts linear independence of the $\sigma_j$. Therefore, the linear system $Bx=b$ admits a unique solution
$\bar x = B^{-1} b$.

To solve the non-linear system, we first observe that there exists $\delta_1>0$ and a neighborhood $U$ of the linear solution
$\bar x=B^{-1}b$ such that
$$
\|G_{T_1}\|_{C^1(U)} \le C e^{-\delta_1 T_1}.
$$
In particular, for $T_0\gg T_1\gg 1$ we have
$$
\|R_{T_0,T_1}\|_{C^1(U)} \le C\big(e^{-\delta_1 T_1}+e^{-\delta_0 (T_0-T_1)}\big).
$$

Since $B\in GL_{d-1}(\R)$, for $T_0\gg T_1\gg 1$ the map
$x\mapsto Bx-b+R_{T_0,T_1}(x)$ is a $C^1$-small perturbation of the
isomorphism $x\mapsto Bx-b$ on $U$.
Hence, by the implicit function theorem, there exists a unique solution
$x=x(T_0, T_1)\in U$ of the non-linear system. Moreover, $x(T_0,T_1)\to\bar x$ as
$T_1\to\infty$ and $T_0-T_1\to\infty$.

 Once $x=(x_1,\ldots, x_{d-1})$ is determined, the remaining neck lengths are recovered as
$$
T_{k+1} = T_1 + \frac{1}{\mu}\log \left(\frac{c_1}{c_{k+1}}x_k\right),
\qquad k=1,\dots,d-1,
$$
so $T_2,\dots,T_d$ are functions of $T_1$.  Hence, for $T_{k+1}$ to be well-defined, we must have
$
\frac{c_1}{c_{k+1}} x_k > 0,
$
and in particular $x_k \neq 0$ for all $k$.

We now show that this condition holds for generic $J$. Recall that
\[
x_k = \frac{c_{k+1}}{c_1} e^{\mu(T_{k+1}-T_1)}.
\]
Thus, if $x_k = 0$, then necessarily $T_{k+1}-T_1 \to +\infty$, and
the $(k+1)$-th neck becomes infinitely longer than the others.
This produces a separation of scales in the neck region, and the
preglued curves develop an additional breaking. Standard SFT
compactness then implies that the limiting building has at least three
levels.
On the other hand, the obstruction bundle gluing for $\mathcal B$
produces a $1$--dimensional family of cylinders, parametrized by the
neck lengths $T_1,\dots,T_d$ subject to $d-1$ independent obstruction
equations.
If such an additional breaking occurs, then at least one new independent
neck parameter is introduced, corresponding to the new interface between
levels, while the obstruction coming from the multiply covered top level
persists. Therefore, the resulting configurations are more constrained
than in the two-level case.

Equivalently, from the point of view of SFT compactness, each additional
breaking produces a higher codimension stratum in the compactified moduli space. Two-level buildings form the codimension-one boundary of the $1$-dimensional moduli space, whereas buildings with at least three levels lie in strata of codimension at least two. Consequently, such configurations do not arise generically as boundary
points of the $1$--dimensional glued moduli space. This is consistent with the general structure of SFT compactifications, where each additional level increases the codimension by at least one.

We conclude that for generic $J$, we have $\bar x_k \neq 0$.
Since $x(T_0,T_1) \to \bar x$ as $T_1 \to \infty$ and $T_0 - T_1 \to
\infty$, it follows that $x_k(T_0,T_1)$ remains uniformly bounded away
from zero for $T_0 \gg T_1 \gg 1$.  Switching the planes on the second level with the `opposite' planes, the condition $x_k \neq 0 \forall k,$ implies that we can also glue the building with the opposite planes and obtain another family of index-$2$ $J$-holomorphic cylinders with a positive end at $\alpha_1^d$ and a negative end at $\alpha_2^d$. As we prove in the next section, these two families will cancel out each other in the proof of $\partial^J \circ \partial^J =0=0$.

The discussion above proves the following theorem.

\begin{theorem}
\label{thm:gluing-opposite}
For generic $J$, let $v_1$ be a top curve with $d$ negative ends asymptotic to $\alpha_3$, as in the index-$2$ buildings of Theorem \ref{C}-(iii.6), lying in the end of $\mathcal M^J_{0,2}(\alpha_1^d ; \alpha_2^d)$.   Assume that for each such end there exists a pair of index-$1$ rigid planes $w_i, w_i'$ asymptotic to $\alpha_3$ through opposite directions.
Then there exist choices of $v_{2k}\in \{w_k,w_k'\},k=1,\ldots,d,$ so that the building $\mathcal B=(v_1,v_2)$ whose second level $v_2$ consists of a trivial cylinder over $\alpha_2^d$ and the $d$ planes $v_{21},\ldots,v_{2d},$ can be glued to a $1$-parameter family $\mathcal C$ of 
$J$-holomorphic cylinders in $\mathcal M^J_{0,2}(\alpha_1^d ; \alpha_2^d)$.
Moreover, the building 
 $\mathcal B'=(v_1,v_2')$ obtained by replacing $v_{2k}$ with the opposite plane $v_{2k}'\in \{w_k,w_k'\}$, for every $k=1,\ldots, d,$ can also be glued to a 
$1$-parameter family $\mathcal C'$ of $J$-holomorphic cylinders in $\mathcal M^J_{0,2}(\alpha_1^d ; \alpha_2^d)$.
\end{theorem}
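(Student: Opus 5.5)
The plan is to derive the theorem from the obstruction-bundle reduction carried out above. The key point is that the sign data of the lower planes enter the obstruction equation only through the coefficients $c_k\in\{\pm1\}$. First, fix the top curve $v_1$ and use Lemma \ref{dim1}, which gives $\dim\ker D^N_{v_1}=1$, spanned by the $\R$-translation. Since $\ind(v_1)=2-d$, the cokernel $W_+=\ker (D^N_{v_1})^*$ then has dimension $d-1$. Choose a basis $\sigma_1,\dots,\sigma_{d-1}$ with leading coefficients $b_{j,k}$ at the $d$ negative ends over $\alpha_3$, normalized by $b_{j,1}=1$. This normalization is possible by Proposition \ref{kerDu2}: every nonzero $\sigma$ has winding $0$ at two or more of these ends, so its leading term there is a multiple of the same $\mu$-eigensection $\phi$. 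By Lemma \ref{dim0}, the rigid planes asymptotic to $\alpha_3$ are finitely many and regular. Their leading asymptotics are $c\,e^{\mu s}\phi$ with $c=\pm1$, and "opposite directions" means precisely $c(w_k')=-c(w_k)$.

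Next, I would write the reduced equation \eqref{non-linear_x}, $Bx-b+R_{T_0,T_1}(x)=0$. We have already shown that $B=(b_{j,k+1})$ is invertible, since a combination with all leading coefficients zero must vanish. The linear solution $\bar x=B^{-1}b$ depends only on $v_1$, not on the choice of planes, and for generic $J$ it satisfies $\bar x_k\neq0$ for all $k$. The constraint for recovering the neck lengths is $\frac{c_1}{c_{k+1}}\bar x_k>0$. I would choose the planes to meet it: take $v_{21}=w_1$, and for $k\ge1$ let $v_{2,k+1}\in\{w_{k+1},w_{k+1}'\}$ be the plane with $c_{k+1}=c_1\,\mathrm{sgn}(\bar x_k)$. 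The opposite-direction hypothesis guarantees that both signs are available at each end. The implicit function theorem argument above then gives a unique solution $x(T_0,T_1)$ near $\bar x$ for $T_0\gg T_1\gg1$, with the same signs as $\bar x$. The neck lengths $T_{k+1}=T_1+\mu^{-1}\log(c_1x_k/c_{k+1})$ are then well defined. After quotienting by the overall translation, this yields a one-parameter family $\mathcal C$ of glued index-$2$ cylinders in $\M^J_{0,2}(\alpha_1^d;\alpha_2^d)$. Their ends are at $\alpha_1^d$ and $\alpha_2^d$, and they are genus zero because the glued domain is a sphere with $d$ capped-off ends.

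For $\mathcal B'$, replacing every plane by its opposite flips every $c_k$ to $-c_k$. The ratios $c_{k+1}/c_1$ are unchanged, so the normalized unknowns $x_k$ and the positivity constraint are exactly the same. The only change in \eqref{obstruction_non_linear} is an overall factor $-1$ on the leading term $\sum_k b_{j,k}c_ke^{\mu T_k}$. After normalizing by $c_1e^{\mu T_1}$, this gives the same system $Bx-b$ up to the remainder. I would verify that the remainder $R_{T_0,T_1}$ also transforms compatibly, or at least stays $C^1$-small with the same bounds; the bounds depend only on exponential decay estimates, which are insensitive to the signs. The same solution branch then exists and produces $\mathcal C'$.

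I expect the main obstacle to be controlling the remainder term uniformly, in particular checking that the $O(e^{\mu T_0})$ contribution from the top level and the terms $G_{T_1}$ do not destroy the sign symmetry between $\mathcal B$ and $\mathcal B'$. I would handle this by taking $T_0-T_1$ large first and then invoking the $C^1$ estimates already established, so that only the leading linear part and the genericity condition $\bar x_k\ne0$ matter.
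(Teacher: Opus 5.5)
Your proposal is correct and is essentially the paper's own argument: the reduction to $Bx-b+R_{T_0,T_1}(x)=0$, the observation that $\bar x=B^{-1}b$ depends only on $v_1$, the choice of planes with $c_{k+1}/c_1=\mathrm{sgn}(\bar x_k)$, and the fact that flipping every $c_k$ leaves the ratios, and hence the system, unchanged are exactly the discussion preceding the theorem together with the Remark following it. One small point you inherit from that discussion is that a singular $B$ only yields a nonzero $\sigma$ whose leading coefficients vanish at the ends $2,\dots,d$, not at all ends; so the invertibility of $B$, and likewise (via $\dim W_+=d-1$ and a dimension count) the normalization $b_{j,1}=1$, should be deduced from the part of Proposition \ref{kerDu2} saying that every nonzero $\sigma\in\ker (D^N_{v_1})^*$ has winding $0$, i.e.\ a nonzero coefficient along $\phi$, at two or more of the $\alpha_3$-ends.
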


\begin{rem}
The choice of $v_{21},\dots,v_{2d}$ in Theorem \ref{thm:gluing-opposite} is made as follows.
Fix any $v_{21} \in \{w_1,w'_1\}$ and let
$\bar x=(\bar x_1,\dots,\bar x_{d-1})
$ be the unique solution of the linear system $Bx=b$, where $B$
is determined by the asymptotics of the top curve $v_1$ and a
chosen normalized basis of $\ker Dv_1^*$. For generic $J$, we have $\bar x_k \neq 0$ for all $k$.
Since $x(T_0,T_1)\to \bar x$ as $T_1\to\infty$ and
$T_0-T_1\to\infty$, it follows that for $T_0\gg T_1\gg1$
the signs of the components $x_k$ coincide with the signs of
$\bar x_k$. For each $k=2,\dots,d$, choose $v_{2k}\in\{w_k,w'_k\}$ so that the
corresponding coefficient $c_k$ satisfies $\frac{c_k}{c_1}\,\bar x_{k-1}>0.$
Then, the sign of $x_{k-1}$ agrees with the required ratio
$c_k/c_1$, and the neck lengths
$$
T_k = T_1 + \frac{1}{\mu}
\log\left(\frac{c_1}{c_k}x_{k-1}\right), \quad k=2,\ldots,d,
$$
are well-defined for $T_0\gg T_1\gg 1$.
This produces the desired family of cylinders in
$\mathcal M^{J}_{0,2}(\alpha_1^d;\alpha_2^d)$.
If we now simultaneously replace each $v_{2k}$ by its opposite plane
$v_{2k}'$, then $c_k$ is replaced by $c'_k=-c_k$ for all $k$.
Since
$
\frac{c'_k}{c'_1}=\frac{c_k}{c_1},
$
the linear solution $\bar x$ is unchanged, and the same argument
shows that gluing is still possible.  
\end{rem}

\subsection{Coherent orientations}

In this section we use the Hutchings--Taubes system of coherent orientations \cite{hutchings2009}
to orient the moduli space
$\M^J_{0,2}(\alpha_1^d;\alpha_2^d)$ of immersed index--$2$ cylinders with positive end at $\alpha_1^d$ and negative end
at $\alpha_2^d$, whose compactification has boundary consisting of two-level
holomorphic buildings as described in Section \ref{sec:gluing}.
If $D$ is a Fredholm operator, we denote its determinant line by
$$
\mathcal{O}(D)
=
\mathcal{O}(\ker D)\otimes \mathcal{O}(\mathrm{coker}\, D).
$$
Here, $D=Du$ denotes the normal Cauchy-Riemann operator obtained by linearizing the
$\bar\partial_J$ operator along an immersed $J$-holomorphic curve $u$, where the normal
bundle is identified with the contact structure near the ends. In particular, the asymptotic
operators at the punctures determine the asymptotic data needed to define orientations of these
Fredholm operators.

The Hutchings-Taubes homotopy invariant coherent orientation system provides a consistent choice of orientations of
such determinant lines, compatible with disjoint unions and, most importantly, with gluing: if
$D = D_{-} \# D_{+}$ is the linearized operator associated with the gluing of two
$J$--holomorphic curves, then the induced orientation of $\mathcal{O}(D)$ is determined naturally
from the orientations of $\mathcal{O}(D_{+})$ and $\mathcal{O}(D_{-})$, with the aid of a suitable exact sequence. As a consequence, if the moduli space is cut out transversely (so that the cokernel vanishes),
then these choices induce orientations on the corresponding moduli spaces of $J$-holomorphic curves.

Let $v\in \M^{J}_{0,2}(\alpha_{1}^d;\alpha_{2}^d)$ be an index-$2$ cylinder with a positive end at $\alpha_{1}$ and a negative end at
$\alpha_{2}$. Such a curve is always immersed and regular, and thus ${\rm coker}(Dv)=0$. The coherent orientation of $\Ker(Dv)\equiv T_v\M^{J}_{0,2}(\alpha_{1}^d;\alpha_{2}^d)$ induces an
orientation of the one-dimensional connected component
$\mathcal C \subset \M^{J}_{0,2}(\alpha_{1};\alpha_{2})/\R
$
containing $[v]$. If $\mathcal B=(v_{+},v_{-})$ is a building at the boundary of $\mathcal C$ we denote
by $\eo(\mathcal B)\in\{-1,+1\}$ the induced endpoint orientation.

Consider two buildings $\mathcal B= (v_{+},v_{-})$ and $\mathcal B'=(v_{+},v'_{-})$ at the boundary of a regular
one-dimensional space $\M^{J}_{0,2}(\alpha_{1};\alpha_{2})/\R$ of the following form.
The top level is a $(2-d)$-index curve
$$
v_{+}\in \M^{J}_{0,d+2}\bigl(\alpha_{1}^d;\alpha_{2}^d,\underbrace{\alpha_{3},\ldots,\alpha_{3}}_{d\ \text{times}}\bigr),
$$
i.e. $v_{+}$ has one positive end at $\alpha_1^d$ and $d+1$ negative ends, of
which one is asymptotic to $\alpha_{2}^d$, and $d$ are asymptotic to $\alpha_{3}$. Moreover, $v_+$ $d$-covers a pair of pants with a positive end at $\alpha_1$ and two  negative ends at $\alpha_2$ and $\alpha_3$.
The bottom level is a disjoint union of curves
$$
v_{-}=v_{-,0}\sqcup v_{-,1}\sqcup\cdots\sqcup v_{-,d},
\qquad
v'_{-}=v_{-,0}\sqcup v'_{-,1}\sqcup\cdots\sqcup v'_{-,d},
$$
where $v_{-,0}$ is the trivial cylinder over $\alpha_{2}^d$ and
$
v_{-,i},\,v'_{-,i}\in \M^{J}_{0,1}(\alpha_{3};\emptyset),i=1,\dots,d,
$
are planes asymptotic to the hyperbolic orbit $\alpha_{3}$ with $\CZ(\alpha_{3})=2$.

\begin{theorem}\label{thm: coherent_orientations}
Assume that for every $i=1,\dots,d,$ the planes $v_{-,i}$ and $v'_{-,i}$ approach
$\alpha_{3}$ through opposite directions.
Then the induced endpoint orientations of $\mathcal B$ and $\mathcal B'$ have opposite signs, i.e., 
$\eo(\mathcal B)=-\eo(\mathcal B').
$
\end{theorem}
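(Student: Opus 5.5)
The plan is to compute $\eo(\mathcal B)$ and $\eo(\mathcal B')$ by Hutchings--Taubes obstruction bundle gluing, where the sign is the local degree of the obstruction section $\mathfrak s$. We then track how $\mathfrak s$ transforms when each lower plane $v_{-,i}$ is replaced by its opposite $v'_{-,i}$. In the Hutchings--Taubes framework, the glued family near a boundary building is oriented as follows. The gluing parameter space $(R_0,\infty)^{d+1}$ modulo $\R$ carries the orientation induced by the coherent orientations of the lower curves, which are rigid planes, and of the trivial cylinder. The obstruction fibre $W_+=\ker(Dv_+^*)^*$ carries the orientation coming from $\mathcal O(Dv_+)$. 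With these orientations, the sign of an end is the orientation sign of the zero set $\mathfrak s^{-1}(0)$, oriented as the preimage of a regular value, with the outward direction of the end pointing toward the building. Since $v_+$ is common to $\mathcal B$ and $\mathcal B'$, the fibre $W_+$, the basis $\sigma_1,\dots,\sigma_{d-1}$ and the matrix $B=(b_{j,k+1})$ are the same in both cases.

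\textbf{Step 1: compare the plane orientations.} A rigid plane in $\M^J_{0,1}(\alpha_3;\emptyset)/\R$ has sign $\epsilon(v_{-,i})\in\{\pm1\}$, determined by comparing the coherent orientation of $\ker D_{v_{-,i}}=\R\partial_a$ with the canonical orientation of the $\R$-direction. Because $\alpha_3$ is hyperbolic with even index, I would use the standard fact for the Hutchings--Taubes (or Bourgeois--Mohnke) orientations at an even orbit. The plane's orientation is governed by the sign of its leading asymptotic coefficient $c_i$ relative to a fixed eigensection $\phi$ of the $\mu$-eigenspace of $A_{\alpha_3}$; this eigenspace is one-dimensional, since $\mu$ is the largest negative eigenvalue with winding $1$ and the orbit is hyperbolic. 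I would prove this by deforming each plane's asymptotic operator to a model and using that, on the neck, the kernel element $\partial_a$ becomes asymptotic to $\pm\mu c_i e^{\mu s}\phi$. Gluing $\partial_a$ into the $\R$-direction of the neck parameter then contributes a factor $\operatorname{sgn}(c_i)$. Thus $c'_i=-c_i$ yields a relative factor $-1$ per plane compared with the orientation determined by the pairing data. I expect this to be the main obstacle: it requires matching the abstract coherent orientation to the concrete asymptotic direction, via the exact sequence defining the orientation of the glued operator $D_+\#D_-$.

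\textbf{Step 2: compute the degree of $\mathfrak s$.} By \eqref{obstruction_non_linear}, after substituting $y_k=c_ke^{\mu T_k}$ the section is $\mathfrak s(\sigma_j)=\sum_k b_{j,k}\,y_k+\text{h.o.t.}$. The variables $y_k$ are oriented diffeomorphic coordinates on the parameter space precisely when all $c_k$ have a fixed sign, and each replacement $c_k\mapsto -c_k$ reverses the coordinate $y_k$. The linearization of $\mathfrak s$ transverse to the zero curve is therefore $B$ composed with the diagonal sign matrix $\mathrm{diag}(c_1,\dots,c_d)$, restricted appropriately. Passing from $\mathcal B$ to $\mathcal B'$ multiplies this by $-\mathrm{Id}$ on $\R^d$. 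The curve direction, given by varying $T_1$ with $x$ fixed as in the Remark, is the outward direction in both cases, and in both cases it points in the direction of increasing $T_1$. Hence the sign change is confined to the $d-1$ transverse directions, contributing $(-1)^{d-1}$.

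\textbf{Step 3: combine the factors.} The plane orientations from Step 1 contribute $\prod_i\epsilon(v'_{-,i})/\epsilon(v_{-,i})=(-1)^d$, and the degree from Step 2 contributes $(-1)^{d-1}$. Their product is $-1$, so $\eo(\mathcal B')=-\eo(\mathcal B)$. I would check the parity bookkeeping against $d=1$, where $W_+=0$ and only Step 1 enters. In that case the claim reduces to the standard fact that opposite rigid planes at an index-$2$ hyperbolic orbit glue to ends of opposite sign.
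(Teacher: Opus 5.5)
Your argument is correct, but it is organized differently from the paper's.

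\textbf{How the two arguments differ.} The paper works directly with the Hutchings--Taubes exact sequence $0\to\ker Dw_R\to\ker Dv_-\oplus\ker Dv_+\to\coker Dv_+\to 0$. It notes that $f_R(\ker Dw_R)$ is close to the span of the translation vectors $u_1=(\partial_a^-,0)$ and $u_2=(0,\widetilde{\partial_a}(v_+))$. Passing to the opposite planes replaces $u_1$ by a negative multiple, so the flip comes from a single vector, uniformly in $d$.

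You instead orient the zero curve of $\mathfrak s$ in the common coordinates $(T_1,\dots,T_d)$. You split the comparison into $(-1)^d$ from the coherent signs of the $d$ flipped planes, and $(-1)^{d-1}$ from $\mathfrak s'\approx-\mathfrak s$ on the $(d-1)$-dimensional fibre. The zero curve and its outward direction are unchanged because $c'_k/c'_1=c_k/c_1$.

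The two computations are equivalent. In coherently oriented bases of the lower kernels, the linearized obstruction map for $\mathcal B$ and $\mathcal B'$ agrees up to positive rescalings of columns, and what changes sign is $u_1$ (equivalently the outward vector). That is the paper's picture.

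\textbf{What your bookkeeping buys.} It isolates the one nontrivial input, your Step 1, which the paper uses only tacitly. The paper's relation $\widetilde{\partial_a}(v'_{-,i})=\mu\,\widetilde{\partial_a}(v_{-,i})$ with $\mu<0$ compares vectors in different kernels. It has content only relative to their coherent orientations. If each plane were simply oriented by its own $\partial_a$, Step 1 would contribute $+1$ and your count would give $(-1)^{d-1}$.

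\textbf{Making Step 1 rigorous.} Your sketch ("deforming to a model") is too vague as written, but the claim can be proved cleanly.
\begin{enumerate}
\item Let $D$ range over the contractible space of real Cauchy--Riemann operators on the plane with asymptotic operator $A_{\alpha_3}$. Let $D_\delta$ be $D$ on a weighted space forcing decay faster than $e^{\mu s}$.
\item Since the $\mu$-eigenspace is one-dimensional, $\ind D_\delta=0$.
\item A nonzero element of $\ker D_\delta$ would wind at most $0$ around $\alpha_3$. By the same count that gives $\mathrm{wind}_\pi=\mathrm{wind}_\infty-1$ for planes, its algebraic number of zeros would then be at most $-1$, which is impossible.
\item Hence $D_\delta$ is always an isomorphism. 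The map $\ker D\to\R$ recording the $\phi$-coefficient is then an isomorphism over the whole space, so the coherent orientation equals a universal sign $\epsilon_0$ times the orientation it induces.
\item The normal part of $\partial_a$ along a plane has leading coefficient a positive multiple of $c$. So the plane's sign relative to $\partial_a$ is $\epsilon_0\,\mathrm{sgn}(c)$.
\end{enumerate}
This gives exactly the factor $-1$ per plane used in your Step 3.
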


\begin{proof}
Let $w_R=v_-\#_d v_+\in \M^{J}_{0,2}(\alpha_{1}^d;\alpha_{2}^d)$ and $w_R'=v_-'\#_d v_+\in \M^{J}_{0,2}(\alpha_{1}^d;\alpha_{2}^d)$ be the glued cylinders ($R\gg 1$).
Since the curves in $\M^J_{0,2}(\alpha_1^d;\alpha_2^d)$ are regular, ${\rm coker}(Dw_R)={\rm coker}(Dw_R')=0$.
Hence each $\Ker(Dw_R)$ is $2$-dimensional and contains the $\R$--translation direction.

Proposition 9.3 in \cite{hutchings2009} proves the existence of an exact sequence
$$
0\to \Ker(Dw_R)\xrightarrow{\,f_R\,}\Ker(Dv_-)\oplus\Ker(Dv_+)\xrightarrow{\,g_R\,}  {\rm coker}(Dv_+)\to 0, \quad R \gg 1,
$$
and similarly for $w_R'$. Thus the orientation of $\Ker(Dw_R)$ is identified with the orientation of a
$2$–dimensional subspace of
$
\Ker(Dv_-)\oplus \Ker(Dv_+)
$
that is $C^0$–close to the span of the translation directions of the two levels.

The orientation of $\mathcal M^J_{0,1}(\alpha_3; \emptyset)$ can be a priori fixed as the one induced by $\partial_a(v),$ $v\in \mathcal M^J_{0,1}(\alpha_3; \emptyset)$. The projection of $\partial_a(v)$ onto the normal bundle $N_v$ is an element $\widetilde \partial_a(v)$ in $\ker (Dv)$, and near $\alpha_3$, $\widetilde \partial_a$ lies in the contact structure and points towards $\alpha_3$ precisely in the direction of the leading eigenspace along $\alpha_3$. This follows from the asymptotic formula near $\alpha_3$.

Hence, for each plane $v_{-,i}$ let $\widetilde{\partial_a}(v_{-,i})$ denote the $L^2$--projection of the
translation vector field $\partial_a(v_{-,i})$ to $\ker(Dv_{-,i})$.
Define
$
\partial_a^- :=
(\widetilde{\partial_a}(v_{-,1}),\ldots,\widetilde{\partial_a}(v_{-,d}))
\in \Ker(Dv_-).
$
In the same way, let $\widetilde{\partial_a}(v_+)\in \ker(Dv_+)$ be the projected translation on
the top level.

Consider the ordered pair in $\Ker(Dv_-)\oplus\Ker(Dv_+)$ given by
$u_1 := (\partial_a^-,0)$ and $u_2 := (0,\widetilde{\partial_a}(v_+)).$
For $R\gg 1$, the subspace $f_R(\Ker(Dw_R))$ is close to $\R\langle u_1,u_2\rangle$, hence the
orientation of $\Ker(Dw_R)$ is the same as the orientation induced by the ordered pair $(u_1,u_2)$.
The analogous statement holds for $w_R'$ with $u_1$ replaced by
\[
u_1' := ((\partial_a^-)',0),
\qquad
(\partial_a^-)' :=
(\widetilde{\partial_a}(v_{-,1}'),\ldots,\widetilde{\partial_a}(v_{-,d}'))
\in \Ker(Dv_-').
\]

Since the planes $v_{-,i},v_{-,i}'$ approach $\alpha_3$ through opposite 
directions, the only change in the computation of the orientations of the corresponding ends of $\mathcal M^J_{0,2}(\alpha_1^d ; \alpha_2^d)$ is that the translation generators differ by a negative scalar, i.e.
$
\widetilde{\partial_a}(v_{-,i}')=\mu\,\widetilde{\partial_a}(v_{-,i}), \mu<0.
$
Hence, $u_1'=\mu\,u_1$ with $\mu<0$, while $u_2$ is unchanged.
Therefore, $(u_1,u_2)$ and $(u_1',u_2)$ determine opposite orientations, and thus the orientations of $\mathcal B$ and $\mathcal B'$ differ by a minus sign, i.e. 
$\eo(\mathcal B')=-\eo(\mathcal B)$. \end{proof}

 Therefore, the boundary contributions arising from buildings of type (iii.6) cancel in pairs and do not contribute to the boundary of the moduli space of index-2 cylinders.

    \begin{figure}[h]
        \centering
        \includegraphics[width=0.5\linewidth]{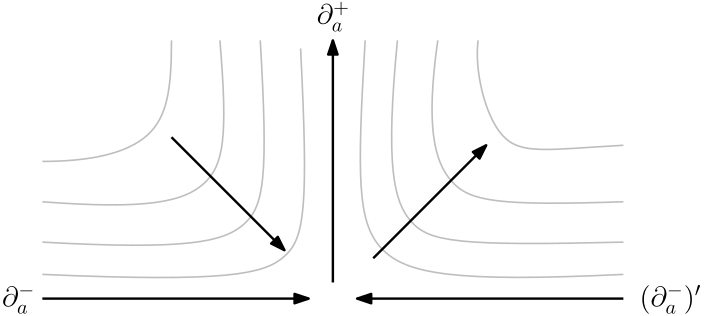}
        \caption{The buildings $\mathcal B$ and $\mathcal B'$ at the boundary of the compactification of  $\mathcal{M}^J_{0,2}(\alpha_1^d;\alpha_2^d)/\R$ have opposite signs. }
        \label{fig:orientation}
    \end{figure}

\section{The existence of the opposite plane}

The goal of this section is to prove the following proposition,
which provides geometric hypotheses on the weakly convex contact manifold so that every $J$-holomorphic plane $u:\C \to \R \times M$ asymptotic to an index-$2$ Reeb orbit $\alpha$ admits an `opposite' $J$-holomorphic plane $u':\C \to \R \times M$ which is asymptotic to $\alpha$ through opposite direction.

\begin{prop}\label{prop_opposite}
Let $(M,\xi=\ker \lambda)$ be a co-orientable, tight, compact, connected contact three-manifold with $\pi_2(M)=0$. Assume that $M$ has a strong filling $(W,\omega)$ satisfying the asphericity condition $\omega|_{\pi_2(W)}=0$ and so that the inclusion $i:\pi_1(M) \to \pi_1(W)$ is injective. Assume the $\lambda$ is a nondegenerate and weakly contact form and let $J \in \mathcal{J}_{\text{reg}}(\lambda)$. Let $\alpha$ be a simple unknotted Reeb orbit such that $\CZ(\alpha)=2$.
Then, either there exists no $J$-holomorphic plane asymptotic to $\alpha$, or, up to reparametrizations and translations in the $\R$-direction, there exist precisely two such planes, which approach $\alpha$ through opposite directions.
\end{prop}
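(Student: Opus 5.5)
The plan is to show that any $J$-holomorphic plane $u=(a,\bar u)$ asymptotic to $\alpha$ projects to an embedded disk $\bar u(\C)\subset M$ whose interior is disjoint from $\alpha$. Such a disk is transverse to $R_\lambda$. Gluing two such disks, entering $\alpha$ from opposite sides, should produce an embedded $2$-sphere, and the topological hypotheses should then force the count to be exactly two.

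\textbf{Step 1 (local structure).} By the proof of Lemma \ref{dim0}, $\windd_\pi(\bar u)=0$ and $\wind_\infty(u,\infty)=1=\CZ(\alpha)/2$. Hence $\bar u$ is an immersion transverse to the Reeb flow. Its leading eigenvalue $\mu<0$ is the largest negative eigenvalue of $A_\alpha$, and its eigenspace is spanned by a section $\phi$ of winding $1$; since $\alpha$ is hyperbolic, this eigenvalue is simple. By Theorem \ref{reprass}, $\bar u$ approaches $\alpha$ along $c\,\phi$ with $c\in\{\pm1\}$ up to a positive factor. Thus there are exactly two possible asymptotic directions, and ``opposite'' means opposite sign of $c$.

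\textbf{Step 2 (embeddedness).} Next I would show that $\bar u$ is injective and $\bar u(\C)\cap\alpha=\emptyset$. The argument is by positivity and stability of intersections: compare $u$ with its translates $u_c=(a+c,\bar u)$ and with the trivial cylinder $\R\times\alpha$. Since $\alpha$ is simple and $\wind_\infty=1$ is extremal, the asymptotic intersection counts vanish, as in Hofer–Wysocki–Zehnder's argument for fast planes. The algebraic intersection number $u*u_c$ vanishes and is homotopy invariant in $c$, and letting $c\to\infty$ shows there are no intersections. The same argument shows that two distinct planes $u,u'$ asymptotic to $\alpha$ have disjoint projections. Moreover, if they approach $\alpha$ from the same direction, Siefring's relative asymptotic formula lets the intersection count detect the difference, which forces $u'=u$ up to translation.

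\textbf{Step 3 (existence of the opposite plane and the upper bound two).} By Step 2, each direction $\pm\phi$ carries at most one plane up to translation. Suppose a plane $D=\bar u(\C)$ exists; since $\alpha$ is unknotted, $D$ is an embedded spanning disk. To produce the opposite plane, I would use the filling. Cap the symplectization end by $(W,\omega)$ and stretch, or use the Bishop family / Hofer's argument, in $W$ filled with a disk family near the other side of $\alpha$. A cleaner alternative is to argue by contradiction: assume only one plane exists, and consider the algebraic count of $\M_{0,1}^J(\alpha;\emptyset)/\R$ weighted by direction. This count should be an invariant obtained from the filling by a cobordism argument, namely a count of planes in $\widehat W$ asymptotic to $\alpha$. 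The disk $D$ together with its pushoff beyond $\alpha$ bounds a sphere in $M$. Here $\pi_2(M)=0$, $\omega|_{\pi_2(W)}=0$ and injectivity of $\pi_1(M)\to\pi_1(W)$ rule out bubbling and disconnected buildings in the neck-stretching, and ensure that the homotopy class of planes in $W$ bounded by $\alpha$ is unique. Tightness is used to exclude planes with $\wind_\pi>0$ and other degenerations.

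The main obstacle is Step 3. Deriving existence of the opposite plane rigorously requires a parametrized moduli space in the completed filling, with a one-parameter family of almost complex structures stretching the neck, together with a compactness analysis. In that analysis the index-$2$ orbit $\alpha$ bounds a $1$-dimensional family in $\widehat W$ whose two ends break into a plane in $\R\times M$ at each direction. I would first try to set up this family as a family of embedded disks in $\widehat W$ with boundary behavior forced by the hyperbolic orbit. Its two boundary breakings would then give precisely the two planes with opposite directions, and uniqueness in Step 2 would give exactly two.
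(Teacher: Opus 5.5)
\textbf{Gap: existence of the second plane (your Step 3) is not proved.} Your Steps 1 and 2 match what the paper does. The paper cites Hofer--Wysocki--Zehnder's Theorems 1.3 and 1.4 and Siefring's intersection theory at exactly these points. So you do obtain at most one plane per asymptotic direction, hence at most two.

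One small correction there: $\windd_\pi(\bar u)=0$ already follows from $\CZ(\alpha)=2$, with no need for tightness. Tightness and unknottedness are what guarantee $\bar u(\C)\cap\alpha=\emptyset$. The extremal winding only controls the asymptotic part of the intersection with $\R\times\alpha$, not the interior intersections.

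The genuine gap is Step 3, the existence of a second plane whenever one exists. This is the only place the filling hypotheses enter. You list neck-stretching over a family of almost complex structures, a Bishop family, and a direction-weighted count that ``should be'' a filling invariant, but you carry out none of them. The invariance of that weighted count is unsupported, and proving it would require exactly the compactness analysis that is missing. Your last paragraph points at the right picture, but asserts rather than proves that both ends of the family are planes in $\R\times M$. As written, the proposal proves only ``at most two''.

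\textbf{What fills the gap.} No neck-stretching is needed.
\begin{enumerate}
\item Take $\widehat J$ on $\widehat W$ equal to $J$ on the cylindrical end and generic elsewhere. A high $\R$-translate of the given plane $v_0$ lies in the end, so it is $\widehat J$-holomorphic.
\item Since $\widehat W$ has no $\R$-action, somewhere injective index-$1$ $\widehat J$-planes asymptotic to $\alpha$ form a $1$-manifold. The component $X$ containing $v_0$ contains all its higher translates. Hence $X$ is an open interval, and one of its ends consists of those translates.
\item Apply SFT compactness at the other end. By $\omega|_{\pi_2(W)}=0$ there are no spheres. A $\widehat J$-holomorphic top level would then be a regular index-$1$ plane, i.e.\ an interior point of $X$, which is impossible. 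So the top level $v_1$ lies in $\R\times M$.
\item Let $\beta_1,\dots,\beta_p$ be the negative asymptotic orbits of $v_1$. Each is capped by the lower levels, hence contractible in $W$, hence contractible in $M$ by injectivity of $\pi_1(M)\to\pi_1(W)$. Weak convexity then gives $\CZ(\beta_i)\ge 2$.
\item Regularity and the index formula (using $\pi_2(M)=0$) give $1\le \ind(v_1)=p+1-\sum_i\CZ(\beta_i)\le 1-p$, so $p=0$.
\end{enumerate}
Thus the far end of $X$ is a $J$-plane $v_1$ asymptotic to $\alpha$. It is not a translate of $v_0$, since those form the other end of $X$. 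Your Step 2 then forces $v_1$ to approach $\alpha$ from the opposite direction.
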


\begin{proof} 
Let $(W,\omega)$ be a strong symplectic filling of $(M,\xi=\ker\lambda)$, i.e., $\partial W = M$ and there exists a Liouville vector field $Y$
defined near $\partial W$ pointing outward such that $\iota_Y \omega =  \lambda.$
Near the boundary we identify a collar neighborhood with
$((-\varepsilon,0]\times M, d(e^a\lambda)),$
where $a$ denotes the coordinate in the Liouville direction. We extend $W$ to its symplectic completion
$$
\widehat W
=
W \cup_{M} ([0,\infty)\times M),
$$
equipped with the symplectic form
$$
\widehat\omega =
\begin{cases}
\omega & \text{on } W, \\
d(e^a\lambda) & \text{on } [0,\infty)\times M.
\end{cases}
$$

Let $J$ be a cylindrical almost complex structure on
$\R\times M$ compatible with $d(e^a\lambda)$.
We choose an almost complex structure $\widehat J$ on $\widehat W$
such that:

\begin{itemize}
\item
$\widehat J$ coincides with $J$ on the cylindrical end
$[0,\infty)\times M$;

\item
$\widehat J$ is compatible with $\omega$ on the compact part $W$;

\item
$\widehat J$ is adjusted to the Liouville vector field near $M$.
\end{itemize}

Such an extension exists by standard arguments. We may assume that $\widehat J$ is generic in the following sense. If a curve is $\widehat J$-holomorphic and somewhere injective then it is Fredholm regular. This condition is achieved by modifying $\widehat J$ in a small collar neighborhood of $M\subset \widehat W$, see \cite[Theorem 7.2]{W16}.

 Let $v_0\in \M_{0,1}^{J}(\alpha;\emptyset)/\R$ be a $J$-holomorphic plane asymptotic to $\alpha$, and whose image is entirely contained in $[1,+\infty) \times M$. Up to translation, we may assume that  $v_0$ intersects $\{1\}\times M$. Let $X$ be the connected component of $\M_{0,1}^{\widehat{J}}(\alpha;\widehat{W})$ containing $v_0$. Since all curves in $X$ have index $1$ and are somewhere injective, the generic condition on $\widehat J$ implies that $X$ contains only regular curves and, in particular, contains the positive $\R$-translations of $v_0$. Moreover, $X$ is not diffeomorphic to $S^1$, and hence $X$ is diffeomorphic to an open interval. Notice that the energies of the curves in $X$ are uniformly bounded. 

Let $(u_n)_{n\in \N}\subset X$ be a sequence  converging to the end of $X$ that is not the end corresponding to the positive $\R$-translations of $\bar{u}$. By the SFT Compactness Theorem, $u_n$ has a subsequence (still denoted $u_n$) that converges to a building $\mathcal B=(v_1,\ldots,v_N)$ with arithmetic genus zero, so that $\mathcal B$ has only one positive puncture at $\alpha$. Here, the levels consist of $J$-holomorphic curves except perhaps the level at the bottom which may be $\widehat J$-holomorphic. The building $\mathcal B$ has the structure of a rooted tree where $v_1$ is the root with precisely one positive end at $\alpha$. Notice that $v_1$ may be either $\widehat J$ or $J$-holomorphic. 

The  symplectic asphericity condition implies that if the bottom level is   $\widehat J$-holomorphic, then it contains no non-trivial $\widehat J$-holomorphic sphere. Indeed, any non-trivial $\widehat J$-holomorphic sphere of the bottom level necessarily lies in $W$ by the maximum principle, and thus cannot exist.

Assume first that $v_1$ is $\widehat J$-holomorphic. By the symplectic asphericity condition,  $v_1$ contains no $\widehat J$-holomorphic sphere, and this implies that $v_1$ is a somewhere injective index-$1$ $\widehat J$-holomorphic plane.   Hence, $v_1$ is regular and $N=1$. Moreover, the generic assumption on $\widehat J$ implies that around $v_1$ there exists a one-parameter family of $\widehat J$-holomorphic planes asymptotic to $\alpha$. This contradicts the fact that $(u_n)_{n\in \N}$ lies at the end of the component $X\subset \mathcal M^{\widehat J}_{0,1}(\alpha ; \widehat W)$ as $n\to \infty$. We conclude that $v_1$ is necessarily $J$-holomorphic. We have to show that $v_1$ is the only level of $\mathcal B$ and it is formed by an index-$1$ $J$-holomorphic plane asymptotic to $\alpha$.

Denote by $v_1:\dot \Sigma \to \R \times M$ the $J$-holomorphic at the top of $\mathcal B$, asymptotic to $\alpha$ at its unique positive end, and assume that $v_1$ has $p\geq 0$ negative ends at the Reeb orbits $\beta_1,\ldots, \beta_p$. Using the lower levels of $\mathcal B$ to cap each $\beta_i$, we conclude that $\beta_i$ is contractible in $W$ for every $i$. Since $i:\pi_1(M)\to \pi_1(W)$ is injective, this implies that $\beta_i$ is also contractible in $M$. Since $\lambda$ is weakly convex and all $\beta_i$ are contractible im $M$, we have $\CZ(\beta_i)\geq 2 \forall i.$ Since $v_1$ is somewhere injective and is not a trivial cylinder, we know that $v_1$ is regular, which implies that $\ind (v_1) \geq 1$. Hence,
$$
\begin{aligned}
1\leq \ind(v_1) & = -\chi(\dot \Sigma) + 2c_1^\tau(v_1^*\xi)+\CZ(\alpha) - \sum_{i=1}^p \CZ(\beta_i) \\& = -(2-1-p)+2-\sum_{i=1}^p \CZ(\beta_i) = p+1-\sum_{i=1}^p \CZ(\beta_i)\\ & \leq p+1-2p =-p+1 \leq 1,
\end{aligned}
$$
where we have used that $\pi_2(M)=0$ in the computation above. This implies $p=0$ and thus $v_1$ is a $J$-holomorphic plane asymptotic to $\alpha$. In this case, $N$ is necessarily equal to $1$ since there cannot exist any level in $\mathcal B$ below $v_1$, and thus $\mathcal B = (v_1).$

Since $v_1$ is $J$-holomorphic, we have shown that the sequence $(u_n)_{n\in \N}$ escapes to infinity as $n\to \infty$. Since $J$ is regular and $\CZ(\alpha)=2$, $v_1$ lies in a one-parameter family of such planes given by the $\R$-translations of $v_1$. Thus $u_n$ coincides with one such translation of $v_1$ for $n$ sufficiently large. By construction $v_1$ cannot coincide with $v_0$ in $\mathcal M^J_{0,1}(\alpha ; \emptyset)/\R$ since the $\R$-translations of $v_0$ correspond to the other end of $X$. Hence, 
\begin{equation}\label{v0neqv1}
    [v_0] \neq [v_1] \in \mathcal M^J_{0,1}(\alpha ; \emptyset) / \R.
\end{equation}

We still need to show that $v_0$ and $v_1$ approach $\alpha$ through opposite directions. We first show that $v_0$ and $v_1$ do not intersect $\alpha$. Recall the following theorem by Hofer, Wysocki and Zehnder.

\begin{theorem}[Hofer, Wysocki, Zehnder {\cite[Theorem 1.3]{prop2}}] \label{prop2,thm1.3}
Let $(M,\lambda)$ be a compact, connected (orientable) contact three-manifold, where $\xi = \ker \lambda$ is a tight contact structure. Let $J \in \mathcal{J}(\lambda)$ and $u:=(\bar{a},\bar{u}):\C \to (\R \times M,J)$ be a non constant finite-energy $J$-holomorphic plane with positive end asymptotic to a Reeb orbit $\alpha$ such that the covering number of $u$ satisfies $d(u)=k$. Suppose that $\alpha$ is nondegenerate, $k$-unknotted with respect to the relative homology class of $u$, and $\CZ(\alpha)\leq 3$, where $\tau$ is a trivialization of $\xi$ along $\alpha$ induced by $\bar u$. Then $\bar{u}(\C) \cap \alpha(\R)=\emptyset$.
\end{theorem}

Since $\alpha$ is unknotted and $\pi_2(M)=0$, we conclude from Theorem \ref{prop2,thm1.3} that both $v_0$ and $v_1$ are in the same relative class of the unknots bounded by $\alpha$ and thus do not intersect their asymptotic limit $\alpha$.

We also need the following theorem to show that the projections of $v_0$ and $v_1$ to $M$ are disjoint. 

\begin{theorem}[Hofer, Wysocki, Zehnder {\cite[Theorem 1.4]{prop2}}] \label{prop2,thm1.4}
Let $(M,\lambda)$ be a closed contact three-manifold and $J \in \mathcal{J}(\lambda)$. Let $u:=(\bar{a},\bar{u}),v:=(\bar{b},\bar{v}):\C \to \R \times M$ be two finite-energy $J$-holomorphic planes asymptotic to the same non-degenerate Reeb orbit $\alpha$. Assume that:
\begin{itemize}
\item the Chern number of the complex bundle $(\bar u \sharp \bar v)^* \xi \to S^2$ computed on the fundamental class of $S^2$ is zero, where $\bar u \sharp \bar v: S^2=\C \cup \overline{\C} \to M$ and $\overline{\C}$ is the set of complex numbers with the opposite orientation (with respect to the domain of $\bar u$) along the circle $\alpha$ at infinity;
\item $d(u)=d(v)$;
\item $\CZ(\alpha) \leq 3$;
\item $\bar{u}(\C) \cap \alpha(\C)=\bar{v}(\C) \cap \alpha(\C)=\emptyset$.
\end{itemize}
Then $\bar{u}(\C)=\bar{v}(\C)$ or $\bar{u}(\C) \cap \bar{v}(\C) = \emptyset.$
\end{theorem}

Since $[v_0] \neq [v_1]$, we conclude from Theorem \ref{prop2,thm1.4} that $\bar v_0(\C) \cap \bar v_1(\C) = \emptyset,$ where $v_0 = (\bar a_0, \bar v_0)$ and $v_1 = (\bar a_1, \bar v_1)$. 

Finally, Siefring's intersection theory \cite{S11} implies that that if two $J$-holomorphic planes converge to $\alpha$ through the same direction, then their projections must intersect, see \cite{PS2018} for a discussion. We conclude that $v_0$ and $v_1$ approach $\alpha$ through opposite directions. By the discussion above, we also conclude that no third plane exists. This finishes the proof of Proposition \ref{prop_opposite}. \end{proof}

\section{Proof of main results} \label{sec:homologia}
Let $(M, \xi = \ker \lambda)$ be a closed contact three-manifold. Assume that $\lambda$ is nondegenerate and let $J\in \mathcal J(\lambda)$. The contact homology $\text{CH}(M,\lambda, J)$ is defined as follows. Let $\text{CC}(M,\lambda)$ be the vector space over $\Q$ generated by the good Reeb orbits $\mathcal P_{\rm good}(\lambda)$, i.e., by those orbits which are not even covers of negative hyperbolic orbits. Let $J \in \mathcal{J}(\lambda)$. 
The linear operator $\partial^J:\text{CC}(M,\lambda) \to \text{CC}(M,\lambda)$ is defined on the generators by  
\begin{equation}
\partial^J(\alpha)=\sum_{\beta \in \mathcal{P}_{\rm good}(\lambda)} \sum_{u \in \M_{0,1}^{J} (\alpha;\beta)/\R} \epsilon(u) \frac{d(\alpha)}{d(u)}  \beta, \quad \forall \alpha \in \mathcal{P}^g(\lambda).
\label{eq:delta}
\end{equation}
Here, $ \epsilon(u) \in \{-1,1\}$ is a sign associated with $u$, which is given by a system of coherent orientations, see \cite{hutchings2009} or \cite{BM}, $d(\alpha) \in \N$ is the covering number of $\alpha$, and $d(u)\in \N$ is the covering number of $u$, see \eqref{d2}. Recall that $u\in \M_{0,1}^J(\alpha;\beta)$ is an index-$1$ $J$-holomorphic cylinder with one positive end at $\alpha$ and one negative end at $\beta$.

The map $\partial^J$ is well-defined when the sum on the right side of \eqref{eq:delta} is finite, i.e., for each Reeb orbit $\alpha \in \mathcal{P}^g(\lambda)$, the set
\begin{equation}\label{BB}
\mathcal{B}:=\left\{\beta \in \mathcal{P}_{\text{good}}(\lambda) \; ; \;  \exists u \in \M_{0,1}^{J}(\alpha; \beta)/\R \right\}
\end{equation}
is finite. This is indeed true by the following Lemma.

\begin{lemma}[Hutchings-Nelson {\cite[Lemma 4.2]{HN}}] \label{lemma4.2}
Let $(M,\xi = \ker \lambda)$ be a closed three-contact manifold, where $\lambda$ is nondegenerate, and let $J \in \mathcal{J}_{\text{reg}}(\lambda)$. Then
\begin{itemize}
\item[(i)] $\M_{0,1}^{J}(\alpha;\beta)/\R$ is a 0-manifold for any Reeb orbits $\alpha,\beta \in \mathcal {P}(\lambda)$.

\item[(ii)] $\M_{0,2}^{J}(\alpha;\beta)/\R$ is a 1-manifold whenever $\alpha,\beta \in \mathcal{P} _{\text{good}}(\lambda)$.

\item[(iii)] If $\alpha,\beta \in \mathcal{P}_{\text{good}}(\lambda)$, then the mapping
$
d:\M_{0,2}^{J}(\alpha;\beta)/\R \to  \N^*:
u \mapsto  d(u),
$
is locally constant.
\end{itemize}
\end{lemma}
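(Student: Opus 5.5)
This is Lemma 4.2 of Hutchings--Nelson \cite{HN}. The plan is to reproduce their argument, using the weakly convex classification of Theorems \ref{B} and \ref{C} wherever they used the dynamically convex one.

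\emph{Item (i).} Let $u\in\M^J_{0,1}(\alpha;\beta)$.
\begin{itemize}
\item[(a)] By Lemma \ref{2.5}, the underlying simple cylinder $\mathfrak u$ satisfies $1\le \ind(\mathfrak u)\le \ind(u)=1$, so $\ind(\mathfrak u)=1$.
\item[(b)] Lemma \ref{2.3} (with $n=1$) bounds $\ind(u)$ from below but does not force $d(u)=1$. So I would instead argue directly: $u$ is an unbranched $d$-fold cover of the index-$1$ embedded cylinder $\mathfrak u$. For cylinders the normal operator of a cover is the pullback operator on a cyclic cover, and its kernel splits into Fourier modes under the deck group.
\item[(c)] Each mode satisfies $\windd_\pi=0$ (by Proposition \ref{prop5.6HWZ}, as for $\mathfrak u$). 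Lemma \ref{lem4.1HN} then gives surjectivity of $D_u$, since $\ind(u)=1>-2+\#\Gamma_0(u)$ whenever at most two ends are even. If both ends are even, I would use Wendl's bound (Theorem \ref{teo1Wendl}) instead to get $\dim\ker D_u=1$.
\item[(d)] Hence every such $u$ is regular and isolated modulo $\R$, and $\M^J_{0,1}(\alpha;\beta)/\R$ is a $0$-manifold.
\end{itemize}

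\emph{Item (ii).} Take $u\in\M^J_{0,2}(\alpha;\beta)$ with $\alpha,\beta$ good.
\begin{itemize}
\item[(a)] Again $u$ covers a simple cylinder $\mathfrak u$ with $1\le\ind(\mathfrak u)\le 2$. It is embedded by Theorem \ref{HTteo4.1}, and the cover is unbranched.
\item[(b)] If $\ind(\mathfrak u)=2$, I would pass to the cyclic cover and decompose the kernel and cokernel of $D_u$ into eigenspaces of the deck action. The invariant part is $\ker D_{\mathfrak u}$, which is regular. The non-invariant parts correspond to twisted operators; their indices are computed from Conley--Zehnder indices of iterates, and I would show they are injective using the winding bounds of Theorem \ref{windhwz} and Proposition \ref{prop5.6HWZ}.
\item[(c)] If $\ind(\mathfrak u)=1$, then $\ind(u)=2$ forces the CZ-indices of the iterates to jump. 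The twisted operators then have index $0$ or $1$, and I would check injectivity by the same winding-number count.
\item[(d)] Goodness of $\alpha,\beta$ rules out exactly the twisted modes with odd winding parity at an even cover of a negative hyperbolic orbit. These modes are the only ones that could carry cokernel.
\item[(e)] Thus $D_u$ is surjective and $\M^J_{0,2}(\alpha;\beta)/\R$ is a $1$-manifold.
\end{itemize}

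\emph{Item (iii).} The surjectivity in (ii) means the multiply covered locus with fixed multiplicity is open. A nearby curve factors through a cylinder near $\mathfrak u$ in the moduli space of simple cylinders, with the same unbranched covering degree. Conversely, a nearby curve of different multiplicity would give extra kernel directions transverse to the covers of degree $d(u)$, contradicting $\dim\ker D_u=2$. Hence $d$ is locally constant.

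The main obstacle is step (b) of (ii): the mode-by-mode (equivariant) transversality for covers of index-$1$ and index-$2$ cylinders, including the parity bookkeeping that uses goodness. I would follow the Fourier-mode argument in \cite{HN} essentially verbatim. Weak convexity plays no role there, since only cylinders appear and no planes bubble off in this statement.
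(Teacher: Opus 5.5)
\textbf{Item (i).} This is essentially right, but you make it more complicated than it needs to be. $\mathfrak u$ is embedded by Theorem \ref{HTteo4.1} and the cover $u\to\mathfrak u$ is unbranched, so $u$ is immersed. Lemma \ref{lem4.1HN} then applies directly, because $\#\Gamma_0(u)\le 2$ for any cylinder. No Fourier modes or $\windd_\pi$ statements are needed. The ``both ends even'' case cannot occur in index $1$, and it would be covered anyway since $1>0$. The kernel is spanned by the deck-invariant $\R$-translation, so $[u]$ is isolated.

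\textbf{The gap: items (ii)(c), (ii)(d) and (iii).} You have misplaced where goodness enters.

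\emph{Cokernel is never the issue.} The same application of Lemma \ref{lem4.1HN} gives $\coker D_u=0$ for every immersed cylinder of index $\ge 1$, whether its ends are good or bad.

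\emph{The real issue is the deck action on the kernel.} Near $u$ the moduli space is modeled on $\Ker D_u/(\mathbb{Z}/d)$. This is a manifold consisting only of $d$-fold covers exactly when $\Ker D_u$ equals the pullback of $\Ker D_{\mathfrak u}$. In your case (c) this fails. The twisted summands are all surjective and their indices add up to $\ind(u)-\ind(\mathfrak u)=1$, so one of them has a one-dimensional kernel. The injectivity you propose to check is therefore false.

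\emph{What you must show instead} is that case (c) cannot occur when $\alpha,\beta$ are good.
\begin{itemize}
\item[$\bullet$] Let $\mathfrak u$ go from $\alpha_0$ to $\beta_0$ and use the trivialization along $\mathfrak u$. Then $\CZ(\alpha_0)-\CZ(\beta_0)=1$ and $\ind(u)=\CZ(\alpha_0^d)-\CZ(\beta_0^d)$.
\item[$\bullet$] Since the difference is odd, exactly one of $\alpha_0,\beta_0$ is positive hyperbolic, say of index $2m$.
\item[$\bullet$] If the other end is elliptic, the index of its $d$-th iterate is $2\lfloor d\theta\rfloor+1$, so $\ind(u)$ is odd.
\item[$\bullet$] If the other end is negative hyperbolic, then $\ind(u)=d(2m\pm1)-2md=\pm d$, with the sign making it $d$.
\item[$\bullet$] Hence $\ind(u)=2$ with $\ind(\mathfrak u)=1$ forces $d=2$, and one end of $u$ is an even cover of a negative hyperbolic orbit, i.e.\ bad.
\end{itemize}

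This example also shows that surjectivity alone is not enough. The double cover is regular, but its anti-invariant kernel direction consists of simple cylinders. So $\M^J_{0,2}/\R$ looks like a half-line ending at $[u]$, and $d$ jumps from $2$ to $1$ there.

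\emph{How the argument closes.} For good ends you get $\ind(\mathfrak u)=2=\ind(u)$, so $\dim\Ker D_{\mathfrak u}=\dim\Ker D_u=2$. The deck action on $\Ker D_u$ is then trivial, and every curve near $u$ is a $d$-fold cover of a simple cylinder near $\mathfrak u$. This gives (ii) and (iii) at once.

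\emph{Your (iii) as written.} The claim ``surjectivity means the multiply covered locus is open'' is false in general, by the same example. Your ``extra kernel direction'' contradiction only works once you know that $\ind(\mathfrak u)=2$, so that the $d$-fold covers of the family of $\mathfrak u$ already fill out $\Ker D_u$.
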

If $\partial^J \circ \partial^J=0$, then the cylindrical contact homology $\text{CH}(M,\lambda,J)$ is defined as the homology of the chain complex $(CC(M,\lambda), \partial^J)$. 

Assume that the conditions of Theorem \ref{main1} hold. We want to show that $\partial^J \circ \partial^J=0$. This is proved by Hutchings and Nelson \cite{HN} in the case $\lambda$ is dynamically convex, see Theorem \ref{thm:HN}. In the current situation, we admit orbits with Conley-Zehnder index $2$. 

We consider good Reeb orbits $\alpha,\beta$ as in Lemma \ref{lemma4.2}-(ii). Consider the building $\mathcal{B}$ appearing at the boundary of the SFT-compactification of a component $X\subset \mathcal M^J_{0,2}(\alpha ; \beta)/ \R$, see Theorem \ref{C}. Hutchings and Nelson resolve all cases for which $\mathcal B$ contains only Reeb orbits with Conley-Zehnder index $\geq 3$. We have to deal with the buildings $\mathcal B$ containing Reeb orbits with Conley-Zehnder index $2$. The treatment given to the buildings in Theorem \ref{C}-(iii.1)-(iii.3) is the same as in \cite{HN} if an index-$2$ orbit appears in one of these buildings. So we only need to consider the buildings in (iii.4), (iii.5) and (iii.6). The buildings in (iii.4) and (iii.5) are ruled out in section \ref{sec_ruling_out_buildings}, see Propositions \ref{naopode1} and \ref{naopode2}.  

Hence, we assume that $\mathcal B$ is as in Theorem \ref{C}-(iii.6). Since $J$ is generic, $\mathcal B=(v_1,v_2)$ has precisely two levels, and there exists $d\in \N$ so that the first level $v_1$ consists of a $(2-d)$-index curve asymptotic to $\alpha=\alpha_1^d$, a negative end at $\beta=\alpha_2^d$ and $d$ negative ends at an index-$2$ hyperbolic orbit $\alpha_3$. This curve $d$-covers a pair of pants with a positive end at $\alpha_1$ and two negative ends at $\alpha_2$ and $\alpha_3$. The second level $v_2$ consists of a trivial cylinder over $\alpha_2^d$ and $d$ planes asymptotic to $\alpha_3$. The system of coherent orientations assigns a sign ${\rm eo}(\mathcal B) \in \{-1,+1\}$ to $\mathcal B$ as a boundary point of $X$. Consider the building $\mathcal B'=(v_1,v_2')$ as in (iii.6), whose first level coincides with $v_1$, but on the second level $v_2'$, we replace the planes of the second level of $\mathcal B$ with their respective opposite pairings. By Theorem \ref{thm:gluing-opposite}, $\mathcal B'$ can be glued to obtain a new component $X' \subset \mathcal{M}^J_{0,2}(\alpha ; \beta)$ for which $\mathcal B'$ is a boundary point of $X'$. By Theorem \ref{thm: coherent_orientations}, we have ${\rm eo}(\mathcal B') = -{\rm eo}(\mathcal B)$. Hence, the components $X$ and $X'$ can be glued together at $\mathcal B \equiv \mathcal B'$, keeping the correct orientation, so that these two boundary points are now invisible. Doing the same for all such pairs of building appearing in the compactification of $\mathcal{M}^J_{0,2}(\alpha ; \beta),$ we see that the proof that $\partial^J \circ \partial^J =0$ reduces to the dynamically convex case, considered by Hutchings and Nelson in \cite{HN}. This finishes the proof of Theorem \ref{main1}.    

Theorem \ref{main2} now follows directly from Theorem \ref{main1} and Proposition \ref{prop_opposite}.

\section*{Acknowledgement}

 PS was partially supported by the National Natural Science Foundation of China (grant number W2431007). AKO was partially supported by the Brazilian Coordination for the Improvement of Higher Education Personnel - CAPES (grant number 88882.377930/2019-01). AKO and PS thanks the support of the Shenzhen International Center for Mathematics - SUSTech.

\bibliographystyle{plain}
\bibliography{bibliografia.bib}

\end{document}